\font\Cp = msbm10
\newcommand{\Rrr}{\hbox{\Cp R}}
\newcommand{\Zzz}{\hbox{\Cp Z}}
\newcommand{\qed}{\mbox{$\Box$}\vspace{\baselineskip}}
\newenvironment{proof}{\noindent {\bf Proof:}}{{\qed}}
\newenvironment{proof_special}{\noindent {\bf Proof:}}{}
\newtheorem{theorem}{Theorem}[section]
\newtheorem{proposition}[theorem]{Proposition}
\newtheorem{lemma}[theorem]{Lemma}
\newtheorem{corollary}[theorem]{Corollary}
\newtheorem{example}[theorem]{Example}
\newtheorem{continuation}[theorem]{Continuation of Example}
\newtheorem{definition}[theorem]{Definition}
\newcommand{\onethingatopanother}[2]
{\genfrac{}{}{0pt}{}{#1}{#2}}
\newcommand{\hz}{\hat{0}}
\newcommand{\ho}{\hat{1}}
\newcommand{\tensor}{\otimes}
\newcommand{\ab}{\av\bv}
\newcommand{\av}{{\bf a}}
\newcommand{\bv}{{\bf b}}
\newcommand{\cd}{\cv\dv}
\newcommand{\ctd}{\cv\mbox{-}2\dv}
\newcommand{\cv}{{\bf c}}
\newcommand{\dv}{{\bf d}}
\newcommand{\id}{\operatorname{id}}
\newcommand{\wt}{\operatorname{wt}}
\newcommand{\lcm}{\operatorname{lcm}}
\newcommand{\HH}{{\mathcal H}}
\newcommand{\LL}{{\mathcal L}}
\newcommand{\Lzero}{\LL \cup \{\hz\}}
\newcommand{\Lub}{\LL_{ub}}
\newcommand{\PP}{{\mathcal P}}
\newcommand{\Pzero}{\PP \cup \{\hz\}}
\newcommand{\zab}{\Zzz\langle\av,\bv\rangle}
\newcommand{\zcd}{\Zzz\langle\cv,\dv\rangle}
\newcommand{\tensordots}{\tensor \cdots \tensor}
\newcommand{\cupdots}{\cup \cdots \cup}
\newcommand{\capdots}{\cap \cdots \cap}
\newcommand{\pair}[2]{\left\langle#1 \: \vrule \: #2\right\rangle}
\newcommand{\zub}{z_{ub}}
\newcommand{\Zub}{Z_{ub}}
\newcommand{\zt}{z_{t}}
\newcommand{\Zt}{Z_{t}}
\newcommand{\ma}{\av}
\newcommand{\mb}{\bv}
\newcommand{\maa}{\av^{2}}
\newcommand{\mab}{\av\bv}
\newcommand{\mba}{\bv\av}
\newcommand{\mbb}{\bv^{2}}
\newcommand{\maaa}{\av^{3}}
\newcommand{\maab}{\av^{2}\bv}
\newcommand{\naab}{\av\av\bv}
\newcommand{\maba}{\av\bv\av}
\newcommand{\mabb}{\av\bv^{2}}
\newcommand{\nabb}{\av\bv\bv}
\newcommand{\nbaa}{\bv\av\av}
\newcommand{\mbab}{\bv\av\bv}
\newcommand{\nbba}{\bv\bv\av}
\newcommand{\mccc}{\cv^3}
\newcommand{\mcd}{\cv \dv}
\newcommand{\mdc}{\dv \cv}
\begin{document}

\title{Affine and toric hyperplane arrangements}
\author{{\sc Richard EHRENBORG,}
        {\sc Margaret READDY}
          and
        {\sc Michael SLONE}}
\date{}
\maketitle

\begin{abstract}
We extend the Billera--Ehrenborg--Readdy
map between the intersection lattice
and face lattice of
a central hyperplane arrangement to
affine and toric hyperplane arrangements.
For arrangements on the torus,
we also generalize Zaslavsky's fundamental
results on the number of regions.
\end{abstract}

\section{Introduction}
\label{section_introduction}

Traditionally combinatorialists have studied topological objects
that are spherical, such as polytopes, or which are homeomorphic to
a wedge of spheres, such as those obtained from shellable complexes.
In this paper we break from this practice and study hyperplane 
arrangements on the $n$-dimensional torus.

It is classical that the convex hull
of a finite collection of points in Euclidean space
is a polytope
and its boundary is a sphere.
The key ingredient in this construction is convexity.
At the moment there is no natural analogue of this process
to obtain a complex whose geometric realization is
a torus.

In this paper we are taking a zonotopal approach to 
working with arrangements on the torus.
Recall that a zonotope can be defined without the notion
of convexity, that is, it is a Minkowski sum of line segments.
Dually, a central hyperplane arrangement gives rise to
a spherical cell complex.
By considering an arrangement on the torus, we are able to
obtain a subdivision whose geometric realization is indeed the torus.
We will see later in
Section~\ref{section_toric}
that
this amounts to restricting ourselves to arrangements
whose subspaces
in the Euclidean space $\Rrr^{n}$
have coefficient matrices with rational entries.
Under the quotient map
$\Rrr^{n} \longrightarrow \Rrr^{n}/\Zzz^{n} = T^{n}$
these subspaces 
are sent to subtori of the $n$-dimensional torus $T^{n}$.

Zaslavsky
initiated the modern study of hyperplane arrangements in his
fundamental treatise~\cite{Zaslavsky}.
For early work in the field,
see the references given
in Gr\"unbaum's text~\cite[Chapter 18]{Grunbaum}.
Zaslavsky showed that evaluating the characteristic
polynomial of a central hyperplane arrangement at
$-1$ gives the number of regions
in the complement of the arrangement.
For central hyperplane arrangements,
Bayer and Sturmfels~\cite{Bayer_Sturmfels} proved the flag $f$-vector
of the arrangement can be determined from the intersection
lattice;
see Theorem~\ref{theorem_Bayer_Sturmfels}.
However, their result is stated as a sum
of chains in the intersection lattice and hence
it is hard to apply.
Billera, Ehrenborg and Readdy
improved the 
Bayer--Sturmfels result by showing that it is enough
to know the flag $f$-vector of the intersection lattice
to compute the flag $f$-vector of a central arrangement.
Recall that the $\cd$-index of a regular
cell complex is an efficient tool to encode its
flag $f$-vector without linear redundancies~\cite{Bayer_Klapper}.
The Billera--Ehrenborg--Readdy theorem gives an explicit way to compute
the $\cd$-index of the arrangement,
and hence its flag $f$-vector~\cite{Billera_Ehrenborg_Readdy_om}.

We generalize Zaslavsky's theorem on the number
of regions of a hyperplane arrangement to the toric case.
Although there is no intersection lattice per se,
one works with the intersection poset.
From the Zaslavsky result we obtain a toric
version of the Bayer--Sturmfels result for
hyperplane arrangements,
that is, there is a natural poset map from
the face poset to the intersection poset,
and furthermore, the cardinality
of the inverse image of a chain under this map
is described.

As in the case of a central hyperplane 
arrangement, our toric version of the Bayer--Sturmfels result determines
the flag $f$-vector of the face poset of
a toric arrangement in terms of its intersection poset.
However, this is far from being explicit.
Using the coalgebraic techniques
from~\cite{Ehrenborg_Readdy_c},
we are able to determine the flag $f$-vector
explicitly in terms of the flag $f$-vector
of the intersection poset. 
Moreover, the 
answer is given by a $\cd$ type of polynomial.
The flag $f$-vector of a regular spherical complex
is encoded by the $\cd$-index, a non-commutative polynomial
in the variables $\cv$ and~$\dv$,
whereas
the $n$-dimensional
toric analogue
is a $\cd$-polynomial plus the $\ab$-polynomial $(\av-\bv)^{n+1}$.

Zaslavsky also showed 
that evaluating the characteristic
polynomial of an affine arrangement at
$1$ gives the number of bounded regions
in the complement of the arrangement.
Thus we return to affine arrangements in
Euclidean space with the twist that
we study the {\em unbounded} regions.
The unbounded regions form a spherical
complex.
In the case of central arrangements,
this complex is exactly what was studied
previously  
by Billera, Ehrenborg and Readdy~\cite{Billera_Ehrenborg_Readdy_om}.
For non-central arrangements,
we determine the $\cd$-index of
this complex in terms of the lattice
of unbounded intersections of the arrangement.

Interestingly, the techniques for studying toric arrangements and
the unbounded complex of 
non-central arrangements are similar.
Hence, we present these
results
in the same paper.
For example, 
the toric and non-central
analogues of the Bayer--Sturmfels theorem
only differ
by which Zaslavsky invariant is used.
The coalgebraic translations of the
two analogues
involve exactly the same argument,
and
the resulting underlying maps
$\varphi_{t}$ (in the toric case) and 
$\varphi_{ub}$ (in the non-central case)
differ only slightly
in their definitions.

We end with many open questions about subdivisions of manifolds.

\section{Preliminaries}
\label{section_preliminaries}

All the posets we will work with are
graded,
that is,
posets having a unique minimal element
$\hz$, a
unique maximal element
$\ho$,
and rank function $\rho$.
For two elements $x$ and $z$ in a graded poset $P$ such that $x \leq z$,
let $[x,z]$ denote the interval
$\{y \in P \: : \: x \leq y \leq z\}$.
Observe that the interval $[x,z]$ is itself a graded poset.
Given a graded poset $P$ of rank $n+1$
and
$S \subseteq \{1, \ldots, n\}$,
the
{\em $S$-rank-selected poset $P(S)$}
is
the poset
consisting of the elements
$P(S) = \{x \in P \: : \: \rho(x) \in S\} \cup \{\hz, \ho\}$.
The partial order of
$[x,y]$ and $P(S)$ are each inherited from that
of $P$.
For a graded poset $P$ let $P^{*}$ denote the dual
poset, that is, the poset having
the same underlying set as $P$ but with the
order relation reversed:
$x <_{P^{*}} y$ if and only if $y <_{P} x$.
For standard poset terminology,
we refer the reader
to Stanley's work~\cite{Stanley_EC_I}.

The M\"obius function $\mu(x,y)$ on a poset $P$ is
defined recursively by
$\mu(x,x) = 1$
and for elements $x, y \in P$ with $x < y$ by 
$\mu(x,y) = - \sum_{x \leq z < y} \mu(x,z)$;
see Section~3.7 in~\cite{Stanley_EC_I}.
For a graded poset~$P$ 
with minimal element $\hz$ and maximal element $\ho$
we write $\mu(P) = \mu_{P}(\hz,\ho)$.

We now review important  results about
hyperplane arrangements, the $\cd$-index
and coalgebraic techniques.
All are essential
for proving the main results of this paper.

\subsection{Hyperplane arrangements}

Let $\HH = \{H_{1}, \ldots, H_{m}\}$
be a hyperplane arrangement in $\Rrr^{n}$,
that is,
a finite collection of affine hyperplanes in 
$n$-dimensional Euclidean space $\Rrr^{n}$.
For brevity, throughout this paper we will 
often refer to a
hyperplane arrangement as an arrangement.
We call an arrangement {\em essential} if
the normal vectors to the hyperplanes in $\HH$
span $\Rrr^{n}$.
An arrangement that is not essential can be made essential by
quotienting out by the subspace $V^{\perp}$ where
$V$ is the subspace orthogonal to 
all of the hyperplanes in $\HH$.
In this paper we are only interested
in essential arrangements.

Observe that the intersection 
$\bigcap_{i=1}^{m} H_{i}$
of all of the hyperplanes in an essential arrangement is either
the empty set $\emptyset$ or a singleton point.
We call an arrangement {\em central} if the
intersection of all the hyperplanes is one point.
We may assume that this point is the origin ${\mathbf 0}$
and hence all of the hyperplanes are codimension $1$
subspaces.
If the intersection is the empty set,
we call the arrangement {\em non-central}.

The {\em intersection lattice} $\LL$
is the lattice formed by ordering all the intersections
of hyperplanes in~$\HH$ by reverse inclusion.
If the intersection
of all the hyperplanes
in a given arrangement is empty,
then we include the empty set
$\emptyset$ as the 
the maximal element in the intersection lattice.
If the arrangement is central, the
maximal element
is  $\{{\mathbf 0}\}$.
In all cases, the minimal element
of $\LL$ will be all of
$\Rrr^{n}$.

For a hyperplane arrangement $\HH$
with intersection lattice $\LL$, the
{\em characteristic polynomial} is defined by
$$   \chi(\HH; t)
   =
     \sum_{\onethingatopanother{x \in \LL}{x \neq \emptyset}}
             \mu(\hz,x) \cdot t^{\dim(x)}  ,   $$
where $\mu$ denotes the
M\"obius function.
The characteristic polynomial is a combinatorial
invariant of the arrangement. 
The fundamental result of Zaslavsky~\cite{Zaslavsky}
is that this
invariant determines the number and type of
regions.
\begin{theorem}[Zaslavsky]
For a hyperplane arrangement $\HH$ in $\Rrr^{n}$
the number of
regions is given by $(-1)^{n} \cdot \chi(\HH; t=-1)$.
Furthermore,
the number of
bounded regions is given by $(-1)^{n} \cdot \chi(\HH; t=1)$.
\label{theorem_Zaslavsky}
\end{theorem}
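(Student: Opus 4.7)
The plan is to prove both formulas by induction on $m = |\HH|$ via deletion-restriction. Fix a hyperplane $H \in \HH$ and form the deletion $\HH' = \HH \setminus \{H\}$ in $\Rrr^{n}$ together with the restriction $\HH'' = \{K \cap H : K \in \HH'\}$ in $H \cong \Rrr^{n-1}$. A direct M\"obius-function computation on the intersection posets of these three arrangements yields the characteristic-polynomial recurrence
$$\chi(\HH; t) = \chi(\HH'; t) - \chi(\HH''; t),$$
since each flat of $\HH$ either belongs to $\HH'$ or sits inside $H$ as a flat of $\HH''$.

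For the first formula, a straightforward geometric argument shows $r(\HH) = r(\HH') + r(\HH'')$: the regions of $\HH'$ sliced by $H$ are in bijection with the full-dimensional regions of $\HH''$, each such slice producing exactly one new region of $\HH$. Combining this with the $\chi$-recurrence, the sign identity $(-1)^{n} = -(-1)^{n-1}$, the inductive hypothesis, and a base case (for example a minimal essential arrangement in each dimension) closes the induction and gives the number of regions. For the second formula I would try to establish the parallel recurrence $b(\HH) = b(\HH') + b(\HH'')$ for bounded regions, which a case analysis confirms whenever $\HH'$ remains essential after the deletion: bounded regions of $\HH'$ sliced by $H$ split into two bounded regions of $\HH$ and contribute one bounded region to $\HH''$, while unbounded regions of $\HH'$ produce a new bounded piece of $\HH$ exactly when the slice is a bounded region of $\HH''$.

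The main obstacle is the situation where deletion destroys essentiality. For instance, removing $\{x = 0\}$ from the essential arrangement $\{x=0,\, y=0,\, y=1\} \subset \Rrr^{2}$ leaves two parallel lines with no bounded regions, yet the restriction to $\{x=0\}$ has one bounded region on the line; here $b(\HH) = 0$ while $b(\HH') + b(\HH'') = 1$, so the naive bounded-region recurrence fails. To circumvent this one either chooses $H$ at each step so that $\HH'$ stays essential whenever possible and essentializes $\HH'$ otherwise (inducting on both dimension and hyperplane count), or sidesteps the issue altogether by passing to the cone $c\HH$ in $\Rrr^{n+1}$, where the arrangement is central and the bounded regions of $\HH$ correspond to regions of $c\HH$ not adjacent to the hyperplane at infinity. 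Either route carries out the delicate bookkeeping of signs and dimensions needed to convert the $\chi$-recurrence into the correct count at $t = 1$.
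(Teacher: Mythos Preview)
The paper does not supply a proof of this theorem: it is quoted as Zaslavsky's result with a citation to his memoir, and then reformulated without proof in poset language as Theorem~\ref{theorem_Zaslavsky_poset}. So there is nothing to compare against directly. It is worth noting, however, that when the paper later proves the toric analogue (Theorem~\ref{theorem_toric_Zaslavsky_version_1}) it uses a valuation/Euler-characteristic argument rather than deletion--restriction; that method also yields the classical Zaslavsky theorem and is quite different in spirit from yours.

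On its own merits your deletion--restriction outline is one of the standard proofs, and the region-count half is essentially complete once a base case is supplied. Two comments. First, your one-line justification of $\chi(\HH;t)=\chi(\HH';t)-\chi(\HH'';t)$ (``each flat of $\HH$ either belongs to $\HH'$ or sits inside $H$'') is too quick: the flats do split as you say, but the M\"obius value attached to a common flat generally differs between $\LL(\HH)$ and $\LL(\HH')$, so the recurrence needs a genuine argument (via Whitney's theorem, the Tutte polynomial of the underlying matroid, or a careful direct M\"obius computation). Second, for bounded regions your diagnosis of the essentiality obstruction is exactly right and your coning suggestion is a clean fix; the alternative of tracking relatively bounded regions together with the rank of $\HH'$ also works, but as your own counterexample already shows, the sign bookkeeping there is genuinely delicate and cannot be waved away.
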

For a graded poset $P$,
define the two Zaslavsky invariants $Z$ and $Z_{b}$
by
\begin{eqnarray*}
      Z(P) 
  & = &
      \sum_{\hz \leq x \leq \ho} (-1)^{\rho(x)} \cdot \mu(\hz,x)  ,\\
      Z_{b}(P) 
  & = &
      (-1)^{\rho(P)} \cdot \mu(P)    .
\end{eqnarray*}
In order to work with Zaslavsky's result, we need the following
reformulation of Theorem~\ref{theorem_Zaslavsky}.
\begin{theorem}
\begin{enumerate}
\item[(i)]
For a central hyperplane arrangement the number of
regions is given by $Z(\LL)$, where $\LL$
is the intersection lattice of the arrangement.

\item[(ii)]
For a non-central hyperplane arrangement the number of
regions is given by $Z(\LL) - Z_{b}(\LL)$,
where $\LL$
is the intersection lattice of the arrangement.
The number of bounded regions is given by~$Z_{b}(\LL)$.
\end{enumerate}
\label{theorem_Zaslavsky_poset}
\end{theorem}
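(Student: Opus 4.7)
The plan is to prove Theorem~\ref{theorem_Zaslavsky_poset} as a bookkeeping consequence of the original Zaslavsky formula (Theorem~\ref{theorem_Zaslavsky}), by unfolding the definition of $\chi(\HH; t)$ and reconciling the exponent $\dim(x)$ with the rank $\rho(x)$ in the intersection lattice $\LL$.

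First I would pin down the rank conventions. Since the arrangement is essential, every non-empty $x \in \LL$ is an affine subspace of codimension equal to its rank, so $\dim(x) = n - \rho(x)$. In the central case the top element $\ho$ is the point $\{{\mathbf 0}\}$ and has rank $n$; in the non-central case, $\ho = \emptyset$ sits one step above any intersection point, so $\rho(\ho) = n+1$. With this in hand, substituting $t = -1$ into the definition of $\chi(\HH; t)$ and using $(-1)^n \cdot (-1)^{\dim(x)} = (-1)^{\rho(x)}$ rewrites Zaslavsky's region count as
\begin{equation*}
 (-1)^n \cdot \chi(\HH; -1) = \sum_{x \in \LL,\ x \neq \emptyset} (-1)^{\rho(x)} \mu(\hz, x).
\end{equation*}
In the central case the restriction $x \neq \emptyset$ is vacuous and the right-hand side is literally $Z(\LL)$, proving~(i). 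In the non-central case this same sum equals $Z(\LL) - (-1)^{\rho(\ho)} \mu(\hz,\ho) = Z(\LL) - Z_{b}(\LL)$, giving the first half of~(ii).

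For the bounded count, I would evaluate at $t = 1$. The characteristic polynomial collapses to the plain M\"obius sum $\sum_{x \neq \emptyset} \mu(\hz, x)$, and the defining recurrence $\sum_{x \in \LL} \mu(\hz, x) = 0$ reduces this to $-\mu(\LL)$. Multiplying by $(-1)^{n}$ and invoking $\rho(\ho) = n+1$ yields $(-1)^{n+1} \mu(\LL) = Z_{b}(\LL)$, as desired.

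The only genuine subtlety is verifying the rank identity $\rho(\emptyset) = n+1$ for essential non-central arrangements; this is the single point at which essentiality of $\HH$ is used, and the rest of the proof is routine sign-tracking from Theorem~\ref{theorem_Zaslavsky}.
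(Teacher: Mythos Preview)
Your proposal is correct and matches the paper's intent: the paper does not give a separate proof of this theorem at all, presenting it merely as a ``reformulation'' of Theorem~\ref{theorem_Zaslavsky}. Your write-up makes explicit exactly the bookkeeping (the identity $\dim(x)=n-\rho(x)$, the separation of the $x=\emptyset$ term, and the M\"obius collapse at $t=1$) that the paper leaves to the reader, and your flagged subtlety about $\rho(\emptyset)=n+1$ in the essential non-central case is genuine and correctly identified.
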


Given a central hyperplane arrangement $\HH$
there are two associated lattices, namely,
the 
intersection lattice $\LL$
and the 
lattice $T$ of faces of the arrangement.
The minimal element of $T$ is the empty set $\emptyset$
and the maximal element is the whole space $\Rrr^{n}$.
The lattice of faces can be seen as the face poset
of the cell complex obtained by intersecting the arrangement
$\HH$ with a sphere of radius $R$
centered at the origin.
Each hyperplane corresponds to a great circle on the sphere.
An alternative way to view the lattice of faces $T$
is that the dual lattice $T^{*}$ is the 
face lattice of the zonotope corresponding to $\HH$.

Let $\Lzero$ denote the intersection lattice
with a new minimal element $\hz$ adjoined.
Define an order- and rank-preserving map $z$
from the dual lattice $T^{*}$ to 
the augmented lattice $\Lzero$
by sending a face of the arrangement, that is, a cone in
$\Rrr^{n}$, to its affine hull.
Note that under the map $z$ the minimal
element of $T^{*}$ is mapped to the 
minimal element of $\Lzero$.
Observe that $z$ maps chains to chains.
Hence we view $z$ as a map from the set
of chains of $T^{*}$
to the set of chains of $\Lzero$.
Bayer and Sturmfels~\cite{Bayer_Sturmfels}
proved the following result about the
inverse image of a chain under the map $z$.
\begin{theorem}[Bayer--Sturmfels]
Let $\HH$ be a central hyperplane arrangement
with intersection lattice $\LL$.
Let $c = \{\hz = x_{0} < x_{1} < \cdots < x_{k} = \ho\}$
be a chain in $\Lzero$. Then the cardinality
of the inverse image of the chain $c$ 
under the map $z : T^{*} \longrightarrow \Lzero$
is given by the product
$$      |z^{-1}(c)|
    =
        \prod_{i=2}^{k}
               Z([x_{i-1},x_{i}])   .   $$
\label{theorem_Bayer_Sturmfels}
\end{theorem}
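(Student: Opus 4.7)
The plan is to construct the chains in $z^{-1}(c)$ by starting at the deepest flat $\ho$ and working outwards, showing at each step that the number of available choices equals the Zaslavsky invariant of a local arrangement. A chain $f_0 < f_1 < \cdots < f_k$ in $T^*$ dualizes to a nested sequence $F_1 \supsetneq F_2 \supsetneq \cdots \supsetneq F_k$ of faces of $\HH$ with $\operatorname{aff}(F_i) = x_i$, together with the bottom element $f_0$, which is forced to be the unique minimum of $T^*$ by the rank-preserving property of $z$. Since $x_k = \ho$ equals the intersection $\bigcap \HH$ of all hyperplanes, the face $F_k$ with this affine hull is also uniquely determined. I would then build the chain by choosing $F_{k-1}, F_{k-2}, \ldots, F_1$ in this order, at each step selecting a face that contains its predecessor in its closure and has the prescribed affine hull.

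\textbf{Local counting step.} The key claim is that for any face $G$ of $\HH$ with $\operatorname{aff}(G) = x_i$, the number of faces $F$ satisfying $\operatorname{aff}(F) = x_{i-1}$ and $G \subseteq \overline{F}$ equals $Z([x_{i-1}, x_i])$, independently of the choice of $G$. To see this, denote by $\HH_{x_i}$ the central subarrangement of those hyperplanes of $\HH$ that contain $x_i$, and by $\HH_{x_i}^{x_{i-1}}$ its restriction to the flat $x_{i-1}$; the intersection lattice of $\HH_{x_i}^{x_{i-1}}$ is precisely the interval $[x_{i-1}, x_i]$ of $\LL$. Fix a point $p$ in the relative interior of $G$; since $G$ is a face of $\HH$ with affine hull $x_i$, a hyperplane of $\HH$ passes through $p$ if and only if it contains $x_i$, so the hyperplanes of $\HH^{x_{i-1}}$ through $p$ are exactly those of $\HH_{x_i}^{x_{i-1}}$. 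A super-face $F$ of $G$ with $\operatorname{aff}(F) = x_{i-1}$ is a region of $\HH^{x_{i-1}}$ whose closure contains $p$, and it is determined by the region of $\HH_{x_i}^{x_{i-1}}$ it occupies near $p$ together with the (forced) sign at $p$ of every hyperplane of $\HH^{x_{i-1}}$ that misses $p$. This sets up a bijection between such super-faces and the regions of $\HH_{x_i}^{x_{i-1}}$, so Theorem~\ref{theorem_Zaslavsky_poset}(i) yields exactly $Z([x_{i-1}, x_i])$ choices.

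\textbf{Conclusion and main obstacle.} Because the local count is independent of $F_i$, the successive choices of $F_{k-1}, F_{k-2}, \ldots, F_1$ are independent, and multiplying the counts gives $\prod_{i=2}^{k} Z([x_{i-1}, x_i])$, as desired. The principal obstacle is the local-to-global bijection underlying the key claim: one has to verify that two distinct super-faces of $G$ yield two distinct local regions of $\HH_{x_i}^{x_{i-1}}$ near $p$, and that every local region extends uniquely to a global super-face of $\HH^{x_{i-1}}$. This hinges on the observation that any two regions of $\HH^{x_{i-1}}$ can be separated only by hyperplanes of $\HH^{x_{i-1}}$, and the only such hyperplanes that touch $p$ are those in $\HH_{x_i}^{x_{i-1}}$, so the sign vector of a super-face is fully determined by the region chosen near $p$.
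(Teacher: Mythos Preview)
Your proposal is correct and follows the same inductive strategy that the paper attributes to Bayer--Sturmfels and reuses in the proofs of the toric and affine analogues (Theorems~\ref{theorem_Bayer_Sturmfels_toric} and~\ref{theorem_Bayer_Sturmfels_unbounded}): build the preimage chain starting from the smallest flat and work outward, using at each step that the number of faces with affine hull $x_{i-1}$ covering a given face with affine hull $x_{i}$ equals the number of regions of the central arrangement with intersection lattice $[x_{i-1},x_{i}]$, namely $Z([x_{i-1},x_{i}])$. Your localization argument via a relative-interior point $p$ is a more explicit justification of the ``local counting'' step than the paper gives, but the underlying idea is identical.
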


\subsection{The $\cd$-index}

Let $P$ be a graded poset of rank $n+1$
with rank function $\rho$.
For $S = \{s_{1} < \cdots < s_{k-1}\}$
a subset of
$\{1, \ldots, n\}$ define $f_{S}$ to be
the number of chains
$c = \{\hz = x_{0} < x_{1} < \cdots < x_{k} = \ho\}$
that have elements with ranks in the set $S$,
that is, 
$$    f_{S}
   =
      |\{ c \:\: : \:\:
          \rho(x_{1}) = s_{1}, \ldots, 
          \rho(x_{k-1}) = s_{k-1} \}|   .  $$
Observe that $f_{S}$ is the number
of maximal chains in the rank-selected poset $P(S)$.
The flag $h$-vector is obtained by the relation
(here we also present its inverse)
$$       h_{S} 
      =
         \sum_{T \subseteq S}
             (-1)^{|S-T|} \cdot f_{T} 
 \:\:\:\: \mbox{ and } \:\:\:\:
         f_{S} 
      =
         \sum_{T \subseteq S}
             h_{T}               .  $$
Recall that by Philip Hall's theorem
the M\"obius function of the $S$-rank-selected poset
$P(S)$ is given by
$\mu(P(S)) = (-1)^{|S|-1} \cdot h_{S}$.

Let $\av$ and $\bv$ be two non-commutative variables
each having degree $1$.
For $S$ a subset of $\{1, \ldots, n\}$ let $u_{S}$ be the monomial
$u_{S} = u_{1} \cdots u_{n}$ where
$u_{i} = \bv$ if $i \in S$
and
$u_{i} = \av$ if $i \not\in S$.
Then the {\em $\ab$-index} is the noncommutative polynomial defined by
$$  \Psi(P) = \sum_{S} h_{S} \cdot u_{S} ,  $$
where the sum is over all subsets $S \subseteq \{1, \ldots, n\}$.
Observe that the $\ab$-index of a poset $P$ of rank $n+1$
is a homogeneous polynomial of degree $n$.

A poset $P$ is {\em Eulerian} if
every interval $[x,y]$, where $x < y$,
satisfies the Euler-Poincar\'e
relation, that is, there are  the same number of elements
of odd as even rank.
Equivalently, the M\"obius function of $P$ is given by
$\mu(x,y) = (-1)^{\rho(x,y)}$
for all $x \leq y$ in $P$.
The quintessential result is that
the $\ab$-index 
of Eulerian posets has the following
form.
\begin{theorem}
The $\ab$-index of an Eulerian poset $P$
can be expressed in terms of
the noncommutative variables
$\cv = \av + \bv$
and $\dv = \av \bv + \bv \av$.
\end{theorem}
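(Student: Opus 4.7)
The plan is to follow the classical Bayer--Klapper strategy, which has two parts: first extract linear relations on the flag $f$-vector from the Eulerian hypothesis (the generalized Dehn--Sommerville relations of Bayer--Billera), then show these relations force $\Psi(P)$ into the subalgebra generated by $\cv$ and $\dv$.

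For the first part, I would fix $S \subseteq \{1, \ldots, n\}$ and two consecutive elements $i < j$ of $S \cup \{0, n+1\}$. For each chain $\hz = x_{0} < \cdots < x_{n+1} = \ho$ counted in $f_{S}$, the subinterval $[x_{i}, x_{j}]$ is itself Eulerian of rank $j-i$, so by Euler--Poincar\'e
\begin{equation*}
\sum_{y : x_{i} < y < x_{j}} (-1)^{\rho(y) - i} \;=\; - 1 - (-1)^{j-i}.
\end{equation*}
Grouping the elements $y$ by their rank $\ell$ and assembling over all chains of $P(S)$ yields the generalized Dehn--Sommerville identity
\begin{equation*}
\sum_{i < \ell < j} (-1)^{\ell}\, f_{S \cup \{\ell\}} \;=\; -(-1)^{i}\bigl(1 + (-1)^{j-i}\bigr)\, f_{S},
\end{equation*}
which after M\"obius inversion becomes a family of linear identities on the flag $h$-vector, i.e.\ on the coefficients of $\Psi(P)$.

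For the second part, I would argue that every $\ab$-polynomial of degree $n$ satisfying this family of identities automatically lies in the $\cv,\dv$-span. My preferred route is a dimension count. The $\cd$-monomials of degree $n$ are in bijection with tilings of a row of $n$ cells by monominoes (weight $\cv$) and dominoes (weight $\dv$), so they number the Fibonacci number $F_{n+1}$. On the other hand, the subspace of degree-$n$ $\ab$-polynomials that satisfy the Bayer--Billera relations has dimension at most $F_{n+1}$, by iteratively using each relation to solve for one ``forbidden'' monomial (those in which a $\bv$ is sandwiched between $\av$'s in an incompatible pattern). Since every $\cd$-polynomial satisfies the relations — a short check from the expansions $\cv = \av + \bv$ and $\dv = \av\bv + \bv\av$ — the two subspaces coincide.

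The main obstacle is the second part: proving that the Bayer--Billera relations are precisely strong enough to cut down to the $\cd$-subspace, rather than to something strictly larger. A more hands-on alternative to the dimension count is induction on the rank of $P$: peel off the leading letter of $\Psi(P)$ and write $\Psi(P) = \cv \cdot \Phi' + \dv \cdot \Phi''$, where $\Phi'$ and $\Phi''$ arise as $\ab$-indices of Eulerian subposets of $P$ of smaller rank, and then apply the inductive hypothesis. Making this decomposition canonical requires carefully invoking Euler--Poincar\'e at each removed rank, which is where most of the combinatorial bookkeeping is concentrated; but either route leads to the desired conclusion.
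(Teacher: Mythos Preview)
The paper does not give its own proof of this theorem: it states the result as background and attributes it to Bayer--Klapper (for face lattices of polytopes) and to Stanley (for all Eulerian posets), with additional references for algebraic proofs. So there is nothing in the paper to compare against beyond those citations.

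Your primary proposal is exactly the Bayer--Klapper strategy: derive the generalized Dehn--Sommerville relations from the Eulerian hypothesis, then use a Fibonacci dimension count to show that the space of $\ab$-polynomials satisfying those relations coincides with the $\cv,\dv$-span. That outline is correct, and your derivation of the relation
\[
\sum_{i<\ell<j} (-1)^{\ell} f_{S\cup\{\ell\}} = -(-1)^{i}\bigl(1+(-1)^{j-i}\bigr) f_{S}
\]
is accurate. As you acknowledge, the real work lies in showing the upper bound on the dimension, i.e.\ in organizing the relations so that each one eliminates a distinct ``non-Fibonacci'' monomial; this is routine once set up carefully, but it is where the content lives.

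One caution about your alternative inductive sketch: writing $\Psi(P) = \cv\cdot\Phi' + \dv\cdot\Phi''$ with $\Phi'$ and $\Phi''$ literally equal to $\ab$-indices of smaller Eulerian posets is not how any standard argument proceeds, and there is no obvious single poset producing $\Phi'$ or $\Phi''$. Stanley's inductive proof instead manipulates the recursion for $\Psi$ directly (essentially the paper's equation~(\ref{equation_Stanley_recursion})) together with the Eulerian condition, without passing to auxiliary subposets. So if you want an inductive alternative, follow Stanley's route rather than the decomposition you describe; but your dimension-count argument is already a complete plan.
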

This theorem
was originally proved for face lattices of convex polytopes
by Bayer and Klapper~\cite{Bayer_Klapper}.
Stanley provided a proof for all
Eulerian posets~\cite{Stanley_d}.
There are proofs which have both used and revealed  the underlying
algebraic structure.
See for instance~\cite{Ehrenborg_k-Eulerian,Ehrenborg_Readdy_homology}.
When the $\ab$-index $\Psi(P)$ is written in terms of
$\cv$ and $\dv$,
the resulting polynomial is called the {\em $\cd$-index}.
There are linear relations 
holding among the entries of the flag $f$-vector
of an Eulerian poset, known as the generalized
Dehn-Sommerville relations; see~\cite{Bayer_Billera}.
The importance of the $\cd$-index is that it removes all of these
linear redundancies among the flag $f$-vector entries.

Observe that the variables $\cv$ and $\dv$ have degrees
$1$ and $2$, respectively.
Thus the $\cd$-index of a poset of rank $n+1$ is
a homogeneous polynomial of degree $n$ in the
noncommutative variables $\cv$ and $\dv$.
Define the reverse of an $\ab$-monomial
$u = u_{1} u_{2} \cdots u_{n}$ to be
$u^{*} = u_{n} \cdots u_{2} u_{1}$
and extend by linearity to an involution on $\zab$.
Since $\cv^{*} = \cv$ and $\dv^{*} = \dv$,
this involution applied to a $\cd$-monomial
simply reverses the $\cd$-monomial.
Finally, for a graded poset $P$
we have  $\Psi(P)^{*} = \Psi(P^{*})$.

A direct approach to describe the $\ab$-index of a
poset $P$ is to give each chain a weight and
then sum over all chains.
For a chain
$c = \{\hz = x_{0} < x_{1} < \cdots < x_{k} = \ho\}$
in the poset $P$, define its {\em weight} to be
\begin{equation}
      \wt(c)
   =
      (\av-\bv)^{\rho(x_{0},x_{1})-1}
        \cdot
      \bv
        \cdot
      (\av-\bv)^{\rho(x_{1},x_{2})-1}
        \cdot
      \bv
        \cdots
      \bv
        \cdot
      (\av-\bv)^{\rho(x_{k-1},x_{k})-1}  ,
\label{equation_weight}
\end{equation}
where $\rho(x,y)$ denotes the rank difference
$\rho(y) - \rho(x)$.
Then $\ab$-index of $P$ is given by
$$
     \Psi(P) = \sum_{c} \wt(c),
$$
where the sum is over all chains $c$ in the poset $P$.

Finally, a third description of the $\ab$-index is
Stanley's recursion for the $\ab$-index of a graded
poset~\cite[Equation~(7)]{Stanley_d}.  It is:
\begin{equation}
\label{equation_Stanley_recursion}
 \Psi(P) = (\av-\bv)^{\rho(P)-1}
         + \sum_{\hz < x < \ho}
              (\av-\bv)^{\rho(x)-1} \cdot \bv \cdot \Psi([x,\ho]).
\end{equation}
The initial condition for
 this recursion is the unique poset of rank $1$,
$B_{1}$, where $\Psi(B_{1}) = 1$.

\subsection{Coalgebraic techniques}

A coproduct $\Delta$ on a free $\Zzz$-module $C$
is a linear map $\Delta : C \longrightarrow C \tensor C$.
In order to be explicit, we use the Sweedler notation~\cite{Sweedler}
for writing the coproduct. To explain this notation, notice that
$\Delta(w)$ is an element of $C \tensor C$ and thus has the form
$$   \Delta(w) = \sum_{i=1}^{k} w_{1}^{i} \tensor w_{2}^{i}   ,  $$
where $k$ is the number of terms and $w_{1}^{i}$ and $w_{2}^{i}$
belong to $C$. Since all the maps that are applied to $\Delta(w)$
treat each term the same, the Sweedler notation drops the index $i$
and one writes
$$   \Delta(w) = \sum_{w} w_{(1)} \tensor w_{(2)}   .  $$
Informally, this sum should be thought of as all the ways of breaking
the element $w$ in two pieces, where the
first piece is denoted by  $w_{(1)}$
and the second by $w_{(2)}$.
The Sweedler notation for the expression
$(\Delta \tensor \id) \circ \Delta$,
where $\id$ denotes the identity map,
is the following
$$    ((\Delta \tensor \id) \circ \Delta)(w)
   =
      \sum_{w} \sum_{w_{(1)}}
         w_{(1,1)} \tensor w_{(1,2)} \tensor w_{(2)}  .  $$
The right-hand side should be thought of as
first breaking $w$ into the two pieces $w_{(1)}$ and $w_{(2)}$
and then breaking
$w_{(1)}$ into the two pieces $w_{(1,1)}$ and $w_{(1,2)}$.
See Joni and Rota for a more detailed explanation~\cite{Joni_Rota}.

The coproduct $\Delta$ is coassociative
if
$(\Delta \tensor \id) \circ \Delta = 
 (\id \tensor \Delta) \circ \Delta$.
The Sweedler notation expresses coassociativity as
$$   \sum_{w} \sum_{w_{(1)}}
         w_{(1,1)} \tensor w_{(1,2)} \tensor w_{(2)}
   =
     \sum_{w} \sum_{w_{(2)}}
         w_{(1)} \tensor w_{(2,1)} \tensor w_{(2,2)}  .  $$
Informally coassociativity states that 
all the possible ways to break $w$ into two pieces and
then breaking the first piece
into the two pieces
is equivalent to
all the ways to break $w$ into two pieces and
then break the second piece
into two pieces.
Compare coassociativity with associativity of
a multiplication map $m: A \tensor A \longrightarrow A$
on an algebra $A$.

Assuming coassociativity, Sweedler notation
simplifies to
$$   \Delta^{2}(w) = \sum_{w} w_{(1)} \tensor w_{(2)} \tensor w_{(3)} , $$
where $\Delta^{2}$ is defined as
$(\Delta \tensor \id) \circ \Delta = (\id \tensor \Delta) \circ \Delta$,
and the three pieces have been renamed as
$w_{(1)}$, $w_{(2)}$ and $w_{(3)}$.
Coassociativity allows one to define
the $k$-ary coproduct
$\Delta^{k-1} : C \longrightarrow C^{\tensor k}$
by the recursion
$\Delta^{0} = \id$
and
$\Delta^{k} = (\Delta^{k-1} \tensor \id) \circ \Delta$.
The Sweedler notation for the $k$-ary coproduct is
$$   \Delta^{k-1}(w)
   = 
     \sum_{w}
         w_{(1)} \tensor w_{(2)} \tensordots w_{(k)}  .  $$

Let $\zab$ denote the polynomial ring in the non-commutative
variables $\av$ and $\bv$.
We define a coproduct~$\Delta$ on the algebra $\zab$
by letting $\Delta$ satisfy the following
identities:
$\Delta(1) = 0$, $\Delta(\av) = \Delta(\bv) = 1 \tensor 1$
and the Newtonian condition 
\begin{equation}
      \Delta(u \cdot v)
    =
      \sum_{u} u_{(1)} \tensor u_{(2)} \cdot v
    +
      \sum_{v} u \cdot v_{(1)} \tensor v_{(2)}    .  
\label{equation_Newtonian}
\end{equation}
For an $\ab$-monomial $u = u_{1} u_{2} \cdots u_{n}$
we have that
$$    \Delta(u)
    =
      \sum_{i=1}^{n}
               u_{1} \cdots u_{i-1}
            \tensor
               u_{i+1} \cdots u_{n}    .    $$
The fundamental result for this coproduct is that
the $\ab$-index is a coalgebra homomorphism~\cite{Ehrenborg_Readdy_c}.
We express this result as the following identity.
\begin{theorem}[Ehrenborg--Readdy]
For a graded poset $P$
with $\ab$-index $w = \Psi(P)$ and for any $k$-multilinear map~$M$ 
on $\zab$, 
the following coproduct identity holds:
$$   \sum_{c}
           M(\Psi([x_{0},x_{1}]),
             \Psi([x_{1},x_{2}]),
                \ldots,
             \Psi([x_{k-1},x_{k}]))
  =
     \sum_{w}
           M(w_{(1)},
             w_{(2)},
                \ldots,
             w_{(k)})    ,  $$
where the first sum is over all
chains $c = \{\hz = x_{0} < x_{1} < \cdots < x_{k} = \ho\}$
of length $k$ 
and the second sum is the Sweedler notation of the $k$-ary coproduct
of $w$, that is, $\Delta^{k-1}$.
\label{theorem_Ehrenborg_Readdy}
\end{theorem}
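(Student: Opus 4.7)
The plan is to reduce everything to the case $k=2$ via coassociativity, and then prove that base case by induction on $\rho(P)$, using Stanley's recursion (\ref{equation_Stanley_recursion}) as the main tool. For $k=2$, the asserted identity is simply
$$ \Delta(\Psi(P)) = \sum_{\hz < x < \ho} \Psi([\hz,x]) \tensor \Psi([x,\ho]), $$
since applying any bilinear $M$ to both sides recovers the general $k=2$ statement. For general $k$, coassociativity lets us write $\Delta^{k-1} = (\id \tensor \Delta^{k-2}) \circ \Delta$; the outer $\Delta$ is handled by the $k=2$ case applied to $P$, producing a sum over $x$ with factors $\Psi([\hz,x]) \tensor \Psi([x,\ho])$, and the induction hypothesis applied to $[x,\ho]$ breaks the second factor into a $(k-1)$-fold tensor product indexed by chains $x < x_2 < \cdots < x_{k-1} < \ho$. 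Concatenating the two chains yields exactly the sum over chains of length $k$ through $P$.

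For the key $k=2$ case, I would first record the auxiliary fact that $\Delta((\av-\bv)^n) = 0$ for all $n \geq 1$. This follows by induction from $\Delta(\av-\bv) = \Delta(\av) - \Delta(\bv) = 0$: the Newtonian condition (\ref{equation_Newtonian}) implies that if $\Delta(f) = 0$ then $\Delta(fg) = (f \tensor 1) \cdot \Delta(g)$, so $\Delta$-annihilation propagates from $\av - \bv$ to every power.

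Now apply $\Delta$ to Stanley's recursion. The leading $(\av-\bv)^{\rho(P)-1}$ term vanishes, and for each intermediate $x$ the Newtonian condition together with $\Delta(\bv) = 1 \tensor 1$ gives
$$ \Delta\bigl((\av-\bv)^{\rho(x)-1} \cdot \bv \cdot \Psi([x,\ho])\bigr) = (\av-\bv)^{\rho(x)-1} \tensor \Psi([x,\ho]) + \bigl((\av-\bv)^{\rho(x)-1} \bv \tensor 1\bigr) \cdot \Delta(\Psi([x,\ho])). $$
Summing over $\hz < x < \ho$ and applying the inductive hypothesis $\Delta(\Psi([x,\ho])) = \sum_{x < y < \ho} \Psi([x,y]) \tensor \Psi([y,\ho])$ yields two sums: a ``single-element'' sum with leading factor $(\av-\bv)^{\rho(x)-1}$, and a ``two-element'' sum over pairs $\hz < x < y < \ho$ whose left tensor factor is $(\av-\bv)^{\rho(x)-1} \bv \Psi([x,y])$.

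Finally, I would compare this with $\sum_{\hz < y < \ho} \Psi([\hz,y]) \tensor \Psi([y,\ho])$ after expanding $\Psi([\hz,y])$ by Stanley's recursion: the leading $(\av-\bv)^{\rho(y)-1}$ term matches the single-element sum, while the remaining terms $\sum_{\hz < x < y}(\av-\bv)^{\rho(x)-1} \bv \Psi([x,y])$ match the two-element sum after a relabeling of indices. The main obstacle is essentially bookkeeping: one must verify that the double sum produced by applying $\Delta$ to the recursion reassembles, after the inductive step and Stanley's recursion backwards, into the target sum indexed by $y$. Once this combinatorial regrouping is verified, the base case $P = B_1$ (where both sides equal $0$) completes the induction.
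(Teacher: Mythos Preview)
Your argument is correct. The reduction to $k=2$ via coassociativity and induction on $k$ is standard, and your inductive proof of the $k=2$ identity is sound: the vanishing of $\Delta((\av-\bv)^{n})$ is verified correctly from the Newtonian condition, the computation of $\Delta\bigl((\av-\bv)^{\rho(x)-1}\bv\,\Psi([x,\ho])\bigr)$ is right, and the regrouping against Stanley's recursion for $\Psi([\hz,y])$ goes through exactly as you say. The base case $\rho(P)=1$ gives $\Psi(P)=1$ with $\Delta(1)=0$ and an empty sum on the right, so both sides vanish.

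There is nothing to compare against, however: the paper does not prove this theorem. It is quoted as a known result from~\cite{Ehrenborg_Readdy_c} and used as a black box throughout Sections~\ref{section_toric} and~\ref{section_affine}. Your write-up therefore supplies a self-contained proof where the paper only gives a citation. For what it is worth, the original argument in~\cite{Ehrenborg_Readdy_c} also proceeds by establishing the $k=2$ coalgebra-homomorphism identity $\Delta(\Psi(P))=\sum_{\hz<x<\ho}\Psi([\hz,x])\tensor\Psi([x,\ho])$ and then iterating, so your approach is in the same spirit; your particular use of Stanley's recursion~(\ref{equation_Stanley_recursion}) as the inductive engine is a clean way to organize the bookkeeping.
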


\subsection{The $\cd$-index of the face poset of a central arrangement}

We recall the definition of the 
omega map~\cite{Billera_Ehrenborg_Readdy_om}.

\begin{definition}
The linear map $\omega$ from $\zab$ to $\zcd$
is formed by 
first replacing every occurrence of $\av\bv$ 
in a given 
$\ab$-monomial 
by $2\dv$
and then replacing the remaining letters by $\cv$.
\end{definition}
For a central hyperplane arrangement~$\HH$
the $\cd$-index of the face poset is computed
as follows~\cite{Billera_Ehrenborg_Readdy_om}.
\begin{theorem}[Billera--Ehrenborg--Readdy]
Let $\HH$ be
a central hyperplane arrangement
with intersection lattice $\LL$
and face lattice $T$. Then the $\cd$-index
of the
face lattice $T$ is given by
$$  \Psi(T) = \omega(\av \cdot \Psi(\LL))^{*}  .  $$
\label{theorem_Billera_Ehrenborg_Readdy}
\end{theorem}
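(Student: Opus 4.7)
The plan is to combine the Bayer--Sturmfels fiber count (Theorem~\ref{theorem_Bayer_Sturmfels}) with the coproduct identity of Theorem~\ref{theorem_Ehrenborg_Readdy}, then match the result against $\omega$ by an algebraic computation. Since $\Psi(P)^* = \Psi(P^*)$, it suffices to prove $\Psi(T^*) = \omega(\av \cdot \Psi(\LL))$. I would also first establish the auxiliary identity $\Psi(\Lzero) = \av \cdot \Psi(\LL)$ by comparing the recursions~(\ref{equation_Stanley_recursion}) for $\Psi(\Lzero)$ and $\av \cdot \Psi(\LL)$ and reducing the difference to an inductive invocation of Stanley's formula for $\Psi(\LL)$.

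Using the chain-weight formula, write $\Psi(T^*) = \sum_{c} \wt(c)$ over chains $c$ in $T^*$. The map $z: T^* \longrightarrow \Lzero$ is rank-preserving, so $\wt(c)$ depends only on $z(c)$; grouping chains by their image and applying Theorem~\ref{theorem_Bayer_Sturmfels} gives
\[
\Psi(T^*) \;=\; \sum_{c'=\{\hz=x_0<\cdots<x_k=\ho\} \text{ in } \Lzero} \wt(c') \cdot \prod_{i=2}^{k} Z([x_{i-1},x_i]).
\]

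To recast this expression via coproducts, observe that $Z(Q)$ is a linear functional of $\Psi(Q)$, because Philip Hall's formula rewrites $Z(Q)$ as an alternating sum of chain counts that are all determined by the flag $f$-vector. Hence there exist linear maps $\eta_1, \eta_2 : \zab \longrightarrow \zab$ with $\eta_1(\Psi(Q)) = (\av-\bv)^{\rho(Q)-1}$ and $\eta_2(\Psi(Q)) = Z(Q) \cdot (\av-\bv)^{\rho(Q)-1}$ for every graded poset $Q$. Assemble the $k$-multilinear map
\[
M_k(v_1,\ldots,v_k) \;=\; \eta_1(v_1) \cdot \bv \cdot \eta_2(v_2) \cdot \bv \cdots \bv \cdot \eta_2(v_k),
\]
which by construction satisfies $M_k(\Psi([x_0,x_1]),\ldots,\Psi([x_{k-1},x_k])) = \wt(c') \cdot \prod_{i=2}^k Z([x_{i-1},x_i])$. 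Applying Theorem~\ref{theorem_Ehrenborg_Readdy} with $w = \Psi(\Lzero) = \av \cdot \Psi(\LL)$ and summing over $k$ yields
\[
\Psi(T^*) \;=\; \sum_{k \geq 1}\sum_{w} \eta_1(w_{(1)}) \cdot \bv \cdot \eta_2(w_{(2)}) \cdot \bv \cdots \bv \cdot \eta_2(w_{(k)}).
\]

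The remaining step, which I expect to be the main obstacle, is the identification of this coproduct sum with $\omega(w)$ for every $w \in \zab$. By linearity it suffices to check the identity on $\ab$-monomials, and I would proceed by induction on the length of the monomial: each substring $\av\bv$ should correspond to a coproduct cut whose separating $\bv$ collaborates with the $(\av-\bv)$-factors produced by $\eta_1$ and $\eta_2$ to yield a single $2\dv$, while letters outside any such substring collapse to $\cv$. Taking the involution $(\cdot)^*$ of the resulting identity then gives $\Psi(T) = \omega(\av \cdot \Psi(\LL))^*$, as desired.
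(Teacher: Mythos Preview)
Your overall architecture matches the paper's proof exactly: use Bayer--Sturmfels to rewrite $\Psi(T^*)$ as a weighted chain sum in $\Lzero$, convert that via Theorem~\ref{theorem_Ehrenborg_Readdy} into a coproduct expression $\varphi(w)=\sum_{k\geq 1}\sum_w \kappa(w_{(1)})\cdot\bv\cdot\eta(w_{(2)})\cdots\bv\cdot\eta(w_{(k)})$ with $w=\Psi(\Lzero)=\av\cdot\Psi(\LL)$, and then identify $\varphi$ with $\omega$. Your $\eta_1,\eta_2$ are precisely the paper's $\kappa,\eta$.

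There is one genuine error in the final step. You assert that the coproduct sum equals $\omega(w)$ ``for every $w\in\zab$,'' but this is false: for instance $\varphi(\bv)=2\bv$ while $\omega(\bv)=\cv$. The identity $\varphi(w)=\omega(w)$ holds only for $\ab$-monomials that \emph{begin with $\av$} (Proposition~\ref{proposition_culminate}), which is exactly what you need since $w=\av\cdot\Psi(\LL)$. Your vague inductive sketch would not detect this restriction, and in fact cannot be carried out without explicit formulas for $\eta_1$ and $\eta_2$---mere existence of such linear maps is not enough to compute anything. The paper instead gives $\kappa$ and $\eta$ explicitly (equations~(\ref{equation_kappa}) and~(\ref{equation_eta})), derives from the functional equation $\varphi(v)=\kappa(v)+\sum_v\varphi(v_{(1)})\cdot\bv\cdot\eta(v_{(2)})$ the three recursions $\varphi(v\cdot\av)=\varphi(v)\cdot\cv$, $\varphi(v\cdot\bv\bv)=\varphi(v\cdot\bv)\cdot\cv$, $\varphi(v\cdot\av\bv)=\varphi(v)\cdot 2\dv$, and checks that these agree with $\omega$ on monomials with initial letter $\av$. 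That explicit recursion, rather than a generic induction on length, is what closes the argument.
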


We review the basic ideas behind the proof of this theorem.
We will refer back to them when we
prove similar results for toric and affine arrangements
in Sections~\ref{section_toric}
and~\ref{section_affine}.

Define three linear operators $\kappa$, $\beta$ and $\eta$ on
$\zab$ by
$$
\kappa(v) = \begin{cases}
(\av-\bv)^{m} & \text{if $v=\av^{m}$ for some $m \geq 0$,} \\
0 & \text{otherwise,}
\end{cases}
$$
$$
\beta(v) = \begin{cases}
(\av-\bv)^{m} & \text{if $v=\bv^{m}$ for some $m \geq 0$,} \\
0 & \text{otherwise,}
\end{cases}
$$
and 
$$
\eta(v) = \begin{cases}
2 \cdot (\av-\bv)^{m+k} & \text{if $v = \bv^{m} \av^{k}$ for some $m, k \geq 0$,} \\
0 & \text{otherwise.}
\end{cases}
$$
Observe that $\kappa$ and $\beta$ are both algebra maps.
The following relations hold for a poset $P$;
see~\cite[Section 5]{Billera_Ehrenborg_Readdy_om}:
\begin{eqnarray}
\kappa(\Psi(P)) & = & (\av-\bv)^{\rho(P)-1} ,                  
\label{equation_kappa} \\
\beta(\Psi(P))  & = & Z_{b}(P) \cdot (\av-\bv)^{\rho(P)-1} ,   
\label{equation_beta} \\
\eta(\Psi(P))   & = & Z(P) \cdot (\av-\bv)^{\rho(P)-1} .
\label{equation_eta}
\end{eqnarray}
For $k \geq 1$ the operator $\varphi_{k}$
is defined by the coalgebra expression
$$   \varphi_{k}(v)
   =
     \sum_{v}
         \kappa(v_{(1)}) \cdot \bv \cdot
         \eta(v_{(2)}) \cdot \bv \cdots \bv \cdot
         \eta(v_{(k)})   ,  $$
where the coproduct splits $v$ into $k$ parts.
Finally $\varphi$ is defined as the sum
$$       \varphi(v) = \sum_{k \geq 1} \varphi_{k}(v)  .   $$
Note that 
in this expression
only a finite number of terms are non-zero.
The connection with hyperplane arrangements is given by
the following proposition.
\begin{proposition}
The $\ab$-index of the lattice of faces of a central hyperplane
arrangement is given by
$$   \Psi(T) = \varphi( \Psi( \Lzero ) )^{*}   .   $$
\label{proposition_varphi}
\end{proposition}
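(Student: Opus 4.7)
The plan is to identify $\Psi(T^{*})$ with $\varphi(\Psi(\Lzero))$ as chain sums and then take duals via $\Psi(T^{*}) = \Psi(T)^{*}$. To this end, I would first write $\Psi(T^{*}) = \sum_{\tilde c} \wt(\tilde c)$ as a sum over chains of $T^{*}$ via~\eqref{equation_weight}. The map $z : T^{*} \longrightarrow \Lzero$ is order- and rank-preserving and sends $\hz$ to $\hz$ and $\ho$ to $\ho$, so each chain $\tilde c$ of $T^{*}$ pushes forward to a chain $z(\tilde c)$ of $\Lzero$ with the identical sequence of ranks. Since $\wt(\cdot)$ depends only on the rank differences along the chain, every element of the fiber $z^{-1}(c)$ carries the common weight $\wt(c)$. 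Grouping and invoking Theorem~\ref{theorem_Bayer_Sturmfels} yields
\[
\Psi(T^{*})
\;=\; \sum_{c} |z^{-1}(c)| \cdot \wt(c)
\;=\; \sum_{c} \prod_{i=2}^{k} Z\bigl([x_{i-1},x_{i}]\bigr) \cdot \wt(c),
\]
where $c = \{\hz = x_{0} < x_{1} < \cdots < x_{k} = \ho\}$ ranges over chains of $\Lzero$.

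Next I would evaluate $\varphi(\Psi(\Lzero))$ directly from its definition. Applying Theorem~\ref{theorem_Ehrenborg_Readdy} to the $k$-linear map $M(u_{1},\ldots,u_{k}) = \kappa(u_{1}) \cdot \bv \cdot \eta(u_{2}) \cdot \bv \cdots \bv \cdot \eta(u_{k})$ converts $\varphi_{k}(\Psi(\Lzero))$ into a sum over length-$k$ chains $c$ of $\Lzero$ with summand $\kappa(\Psi(I_{1})) \cdot \bv \cdot \eta(\Psi(I_{2})) \cdot \bv \cdots \bv \cdot \eta(\Psi(I_{k}))$, where $I_{j} = [x_{j-1},x_{j}]$. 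Substituting the identities~\eqref{equation_kappa} and~\eqref{equation_eta} extracts the scalar $\prod_{j=2}^{k} Z(I_{j})$ and leaves the monomial $(\av-\bv)^{\rho(I_{1})-1} \cdot \bv \cdot (\av-\bv)^{\rho(I_{2})-1} \cdot \bv \cdots \bv \cdot (\av-\bv)^{\rho(I_{k})-1}$, which is precisely $\wt(c)$. Summing over $k \geq 1$ reproduces the chain sum above, giving $\Psi(T^{*}) = \varphi(\Psi(\Lzero))$, and applying $(\cdot)^{*}$ yields $\Psi(T) = \varphi(\Psi(\Lzero))^{*}$.

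The subtlety worth tracking is the asymmetric treatment of the first interval: $\varphi_{k}$ uses $\kappa$ in the initial slot, which carries no $Z$-factor by~\eqref{equation_kappa}, while every subsequent slot uses $\eta$ and contributes a $Z$-factor by~\eqref{equation_eta}. This matches the Bayer--Sturmfels product starting at $i=2$ rather than $i=1$. Getting these initial-interval conventions to line up, together with confirming that $z$ preserves ranks between the two sides, is where the bookkeeping requires the most care; otherwise the argument is a direct translation between the combinatorial chain sum for $\Psi(T^{*})$ and the coalgebraic expression for $\varphi(\Psi(\Lzero))$.
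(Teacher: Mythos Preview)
Your proposal is correct and follows essentially the same approach the paper uses. The paper does not prove this particular proposition in full (it is quoted from~\cite{Billera_Ehrenborg_Readdy_om}), but your argument---grouping chains of $T^{*}$ by their image under $z$, applying Theorem~\ref{theorem_Bayer_Sturmfels} to the fiber sizes, and then matching against $\varphi_{k}$ via Theorem~\ref{theorem_Ehrenborg_Readdy} with equations~\eqref{equation_kappa} and~\eqref{equation_eta}---is exactly the template the paper employs for the toric analogue in Theorem~\ref{theorem_poset_varphi_toric}.
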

The function $\varphi$ satisfies the functional equation
$$     \varphi(v)
     = 
       \kappa(v)
     + 
       \sum_{v} \varphi(v_{(1)}) \cdot \bv \cdot \eta(v_{(2)}) .  $$
From this relation it follows that
the function $\varphi$ satisfies the initial conditions
$\varphi(1) = 1$ and $\varphi(\bv) = 2 \cdot \bv$
and the recursions:
\begin{eqnarray}
\varphi(v \cdot \av) & = & \varphi(v) \cdot \cv , 
\label{equation_varphi_a} \\
\varphi(v \cdot \bv \bv) & = & \varphi(v \cdot \bv) \cdot \cv , 
\label{equation_varphi_b_b} \\
\varphi(v \cdot \av \bv) & = & \varphi(v) \cdot 2 \dv ,
\label{equation_varphi_a_b}
\end{eqnarray}
for an $\ab$-monomial $v$;
see~\cite[Section~5]{Billera_Ehrenborg_Readdy_om}.
These recursions culminate in the following result.
\begin{proposition}
\label{proposition_culminate}
For an $\ab$-monomial $w$ that begins with $\av$,
the two maps $\varphi$ and $\omega$ coincide,
that is,
$\varphi(w) = \omega(w)$.
\end{proposition}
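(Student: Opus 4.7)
The plan is to prove $\varphi(w) = \omega(w)$ by induction on the length of the $\ab$-monomial $w$. The base case is $w = \av$, where $\varphi(\av) = \varphi(1) \cdot \cv = \cv = \omega(\av)$ by the initial condition $\varphi(1)=1$ and recursion~(\ref{equation_varphi_a}).

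For the inductive step, I would peel the last letter (or last two letters) off $w$ and split into three cases based on the right end of $w$. In each case I would verify that $\omega$ behaves compatibly: appending a single letter at the right of a monomial $u$ produces $\omega(u \cdot \av) = \omega(u) \cdot \cv$ and $\omega(u \cdot \bv\bv) = \omega(u \cdot \bv) \cdot \cv$, and appending a block $\av\bv$ produces $\omega(u \cdot \av\bv) = \omega(u) \cdot 2\dv$. The key observation here is that no new $\av\bv$ pair can straddle the boundary between $u$ and the appended suffix in these situations (either the appended letter is $\av$ rather than $\bv$, or the character immediately before the appended block is not paired to any newly introduced letter), so the $\omega$-replacements inside $u$ are unaffected.

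Case (a): $w = w' \cdot \av$. Then $w'$ begins with $\av$, so by induction $\varphi(w') = \omega(w')$, and combining~(\ref{equation_varphi_a}) with the behavior of $\omega$ above gives $\varphi(w) = \varphi(w')\cdot\cv = \omega(w')\cdot\cv = \omega(w)$. Case (b): $w = w'' \cdot \av\bv$. If $w''$ is empty we compute directly via $\varphi(\av\bv) = 2\dv = \omega(\av\bv)$; otherwise $w''$ begins with $\av$, and~(\ref{equation_varphi_a_b}) together with induction gives $\varphi(w) = \varphi(w'')\cdot 2\dv = \omega(w'')\cdot 2\dv = \omega(w)$. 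Case (c): $w = w' \cdot \bv$ with $w'$ ending in $\bv$. Writing $w' = w''\cdot\bv$, the recursion~(\ref{equation_varphi_b_b}) yields $\varphi(w) = \varphi(w')\cdot \cv$; since $|w|\geq 2$ and $w$ begins with $\av$, we have $|w'|\geq 2$ (otherwise $w = \av\bv$ would fall under Case (b)), so $w'$ begins with $\av$ and induction finishes the argument.

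The main obstacle, which is not deep but requires care, is verifying the compatibility of $\omega$ with appending letters: one must argue that the rewriting rule ``replace each $\av\bv$ by $2\dv$, then all remaining letters by $\cv$'' is unambiguous (the occurrences of $\av\bv$ are automatically non-overlapping) and that appending letters on the right introduces no straddling $\av\bv$ pair in the three situations relevant above. Once these local checks are in hand, the induction matches each recursion~(\ref{equation_varphi_a})--(\ref{equation_varphi_a_b}) to the corresponding behavior of $\omega$, completing the proof.
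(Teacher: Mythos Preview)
Your proof is correct and is essentially the argument the paper intends: the paper presents the recursions~(\ref{equation_varphi_a})--(\ref{equation_varphi_a_b}) and then simply states that ``these recursions culminate in'' Proposition~\ref{proposition_culminate}, leaving the induction implicit (and referring to~\cite[Section~5]{Billera_Ehrenborg_Readdy_om} for details). Your case split on the right end of $w$ and the accompanying checks that $\omega$ satisfies the same three recursions are exactly what is needed.
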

Finally, Theorem~\ref{theorem_Billera_Ehrenborg_Readdy}
follows by Proposition~\ref{proposition_culminate} and 
from the fact that
$\Psi(\Lzero) = \av \cdot \Psi(\LL)$.

\subsection{Regular subdivisions of manifolds}

The face poset $P(\Omega)$ of a cell complex $\Omega$ is
the set of all cells in $\Omega$ together
with a minimal element~$\hz$ and a maximal element $\ho$.
One partially orders two
cells $\tau$ and $\sigma$
by requiring that
$\tau < \sigma$ if the cell $\tau$ is contained in $\overline{\sigma}$,
the closure of $\sigma$.
In order to define a regular cell complex,
consider the cell complex $\Omega$ embedded in 
Euclidean space $\Rrr^{n}$.
This condition is compatible with toric cell complexes
since the $n$-dimensional torus can be embedded 
in $2n$-dimensional Euclidean space.
Let $B^{n}$ denote the ball
$\{x \in \Rrr^{n} \: : \: x_{1}^{2} + \cdots + x_{n}^{2} \leq 1\}$
and
let $S^{n-1}$ denote the sphere
$\{x \in \Rrr^{n} \: : \: x_{1}^{2} + \cdots + x_{n}^{2} = 1\}$.
A cell complex $\Omega$ 
is {\em regular} if 
(i) $\Omega$ consists of a finite number of cells,
(ii) for every cell $\sigma$ of $\Omega$
the pair $(\overline{\sigma}, \overline{\sigma} - \sigma)$
is homeomorphic
to a pair $(B^{k},S^{k-1})$ for some integer $k$,
and
(iii)
the boundary
$\overline{\sigma} - \sigma$
is the disjoint union of smaller cells in $\Omega$.
See Section~3.8 in~\cite{Stanley_EC_I} for more details.
For a discussion of regular cell complexes not embedded
in $\Rrr^n$,
see~\cite{Bjorner_topological_methods}.

The face poset of a regular subdivision of the sphere
is an Eulerian face poset
and hence has a $\cd$-index.
For regular subdivisions of compact manifolds,
a similar result holds.
This was independently observed by
Ed~Swartz~\cite{Swartz}.
\begin{theorem}
Let $\Omega$ be a regular cell complex whose geometric realization
is a compact $n$-dimensional manifold $M$. 
Let $\chi(M)$ denote the Euler characteristic of $M$.
Then the $\ab$-index
of the face poset $P$ of $\Omega$ has the following form.
\begin{enumerate}
\item[(i)]
If $n$ is odd then
$P$ is an Eulerian poset and hence
$\Psi(P)$ can written in terms of $\cv$ and $\dv$.

\item[(ii)]
If $n$ is even then $\Psi(P)$ has the form
$$      \Psi(P) 
    = 
        \left( 1-\frac{\chi(M)}{2}
        \right) \cdot (\av-\bv)^{n+1}
      + 
        \frac{\chi(M)}{2} \cdot \cv^{n+1}
      + 
        \Phi   ,   $$
where $\Phi$ is a homogeneous $\cd$-polynomial of degree $n+1$
and $\Phi$ does not contain the term $\cv^{n+1}$.
\end{enumerate}
\label{theorem_manifold}
\end{theorem}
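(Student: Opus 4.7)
The plan is to analyze the interval structure of $P$ and invoke the $\cv\dv$-index theorem for Eulerian posets whenever possible. Every proper interval of $P$ is Eulerian: for $y<\ho$ the interval $[\hz,y]$ is the face lattice of the closed cell $\overline y$, a regular cell decomposition of a ball, hence Eulerian; for $x>\hz$ the interval $[x,\ho]$ is the face lattice of the link of $x$ in $M$, which is a sphere by the manifold hypothesis and hence Eulerian; intermediate intervals sit inside one of these. Using $\mu(\hz,\sigma)=(-1)^{\dim\sigma+1}$ for a cell $\sigma$ and unraveling the recursive definition of the M\"obius function yields
\[
\mu(P)\;=\;-1+\sum_{\sigma}(-1)^{\dim\sigma}\;=\;\chi(M)-1.
\]

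For odd $n$, any compact odd-dimensional manifold satisfies $\chi(M)=0$, so $\mu(P)=-1=(-1)^{\rho(P)}$; combined with the Eulerianness of proper intervals, this shows that $P$ itself is Eulerian, and the Bayer--Klapper--Stanley theorem gives $\Psi(P)\in\zcd$, proving~(i).

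For even $n$, $P$ need not be Eulerian. The key structural claim is that $\Psi(P)=\Phi_{\mathrm{tot}}+\lambda\cdot(\av-\bv)^{n+1}$ for some $\cv\dv$-polynomial $\Phi_{\mathrm{tot}}$ with rational coefficients and some rational $\lambda$. To establish this, apply Theorem~\ref{theorem_Ehrenborg_Readdy} with $k=2$ and $M$ the tensor identity to obtain $\Delta(\Psi(P))=\sum_{\hz<x<\ho}\Psi([\hz,x])\tensor\Psi([x,\ho])$, which lies in $\zcd\tensor\zcd$ since every proper interval of $P$ is Eulerian. Moreover $\Delta((\av-\bv)^{n+1})=0$, by induction on the exponent using the Newtonian axiom~\eqref{equation_Newtonian} together with $\Delta(\av-\bv)=0$. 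The structural claim then reduces to a supporting algebraic lemma: in degree $n+1$, the preimage under $\Delta$ of $\zcd\tensor\zcd$ consists exactly of $\cv\dv$-polynomials plus rational multiples of $(\av-\bv)^{n+1}$. This lemma is verifiable degree-by-degree by a direct linear-algebra computation on $\ab$-words.

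Granting the structural claim, the constants $\lambda$ and $[\cv^{n+1}]\Phi_{\mathrm{tot}}$ are pinned down by two linear conditions. Applying the operator $\beta$ of~\eqref{equation_beta} and using $\beta(\cv^{n+1})=(\av-\bv)^{n+1}$, $\beta((\av-\bv)^{n+1})=-(\av-\bv)^{n+1}$ for $n$ even, the vanishing of $\beta$ on every $\cv\dv$-monomial other than $\cv^{n+1}$, and the identity $\beta(\Psi(P))=(\chi(M)-1)(\av-\bv)^{n+1}$ gives $[\cv^{n+1}]\Phi_{\mathrm{tot}}-\lambda=\chi(M)-1$. Comparing coefficients of $\av^{n+1}$ on both sides, using $[\av^{n+1}]\Psi(P)=h_\emptyset=1$, $[\av^{n+1}](\av-\bv)^{n+1}=1$, and that $[\av^{n+1}]$ of any $\cv\dv$-monomial other than $\cv^{n+1}$ vanishes (since those monomials contain $\dv$, hence $\bv$), gives $[\cv^{n+1}]\Phi_{\mathrm{tot}}+\lambda=1$. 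Solving yields $\lambda=1-\chi(M)/2$ and $[\cv^{n+1}]\Phi_{\mathrm{tot}}=\chi(M)/2$; writing $\Phi_{\mathrm{tot}}=(\chi(M)/2)\cv^{n+1}+\Phi$ completes the proof of~(ii). The main obstacle is the supporting algebraic lemma, a bialgebraic reflection of the fact that any failure of the Bayer--Billera relations for the flag $h$-vector of a poset with Eulerian proper intervals must lie in the one-dimensional direction of $(\av-\bv)^{n+1}$.
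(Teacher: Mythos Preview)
Your overall architecture matches the paper's: establish that all proper intervals of $P$ are Eulerian, deduce the structural form of $\Psi(P)$, then pin down the two constants. The paper discharges the first step by citing Stanley's Theorem~3.8.9, and for part~(i) uses the purely combinatorial fact (Stanley, Exercise~69c) that an odd-rank poset with all proper intervals Eulerian is itself Eulerian; your route through $\chi(M)=0$ for closed odd-dimensional manifolds is an equally valid, more topological, alternative. For the constants in part~(ii) you use the $\av^{n+1}$-coefficient and the operator $\beta$; the paper reads off the $\av^{n+1}$- and $\bv^{n+1}$-coefficients directly. Since $\beta(\Psi(P))=Z_b(P)(\av-\bv)^{n+1}$ precisely encodes the $\bv^{n+1}$-coefficient, the two computations are the same.

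The genuine difference is in how the structural claim for part~(ii) is justified. The paper simply invokes \cite[Theorem~4.2]{Ehrenborg_k-Eulerian} to conclude $\Psi(P)\in\Rrr\langle\cv,\dv,(\av-\bv)^{n+1}\rangle$. You instead argue coalgebraically: since $\Delta(\Psi(P))\in\zcd\tensor\zcd$ and $\Delta((\av-\bv)^{n+1})=0$, you reduce to the assertion that $\Delta^{-1}(\zcd\tensor\zcd)$ in degree $n+1$ equals $\zcd_{n+1}+\mathbb{Q}\,(\av-\bv)^{n+1}$. This lemma is correct and is in fact equivalent to the cited theorem, but ``verifiable degree-by-degree by a direct linear-algebra computation'' is not a proof for general~$n$: a finite check in each degree does not establish the statement uniformly. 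One can make your approach rigorous by an induction on degree (the kernel of $\Delta$ in each positive degree is one-dimensional, spanned by the relevant power of $\av-\bv$, and one shows the image of $\zcd$ under $\Delta$ is exactly $\operatorname{im}\Delta\cap(\zcd\tensor\zcd)$), but as written the lemma is asserted rather than proved. In short, your coalgebraic reformulation is a legitimate and attractive alternative perspective, but it relocates rather than eliminates the hard step that the paper handles by citation.
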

\begin{proof}
Observe that the poset $P$ has rank $n+2$.
By~\cite[Theorem~3.8.9]{Stanley_EC_I} we know that
every interval~$[x,y]$ strictly contained in $P$
is Eulerian. When the rank of $P$ is odd this implies
that $P$ is also Eulerian; see~\cite[Exercise 69c]{Stanley_EC_I}.
Hence in this case the $\ab$-index of $P$
can be expressed as a $\cd$-index.
When $n$ is even, we use~\cite[Theorem~4.2]{Ehrenborg_k-Eulerian}
to conclude that
the $\ab$-index of $P$ belongs to
$\Rrr\langle\cv,\dv,(\av-\bv)^{n+1}\rangle$.
Since
$\Psi(P)$ has degree $n+1$,
the $\ab$-index $\Psi(P)$ can be written in the form
$$      \Psi(P) 
    = 
        c_{1} \cdot (\av-\bv)^{n+1}
      + 
        c_{2} \cdot \cv^{n+1}
      + 
        \Phi   ,   $$
where $\Phi$ is a homogeneous $\cd$-polynomial of degree $n+1$
that does not contain any
$\cv^{n+1}$ terms. 
By looking at the coefficients
of $\av^{n+1}$ and $\bv^{n+1}$, we have
$c_{1} + c_{2} = 1$ and
$c_{2} - c_{1} = \mu(P) = \chi(M) - 1$,
where the last identity is again~\cite[Theorem~3.8.9]{Stanley_EC_I}.
Solving for $c_{1}$ and $c_{2}$ proves the result.
\end{proof}

\begin{corollary}
Let $P$ be the face poset of a regular cell complex whose geometric realization
is a compact $n$-dimensional manifold $M$. 
If $n$ is odd then the flag $h$-vector of $P$ is symmetric, that is,
$h_{S} = h_{\overline{S}}$.
If $n$ is even the flag $h$-vector of $P$ satisfies
$$     h_{S} - h_{\overline{S}}
    =
       (-1)^{|S|} \cdot \left( 2 - \chi(M) \right)  . $$
\end{corollary}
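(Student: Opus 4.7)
The plan is to extract the flag $h$-vector relations directly from the form of $\Psi(P)$ supplied by Theorem~\ref{theorem_manifold}, using the algebra involution $\sigma$ on $\zab$ that swaps $\av$ and $\bv$. Since $P$ has rank $n+2$, the $\ab$-index has degree $n+1$, and a direct check shows that $\sigma(u_{S}) = u_{\overline{S}}$, where $\overline{S} = \{1,\ldots,n+1\} \setminus S$. Hence writing $\Psi(P) = \sum_{S} h_{S} u_{S}$ gives $\sigma(\Psi(P)) = \sum_{S} h_{\overline{S}} u_{S}$, so the coefficients of $\Psi(P) - \sigma(\Psi(P))$ in the $\ab$-basis encode precisely the defects $h_{S} - h_{\overline{S}}$.

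Since $\sigma$ fixes both $\cv = \av + \bv$ and $\dv = \av\bv + \bv\av$, every $\cd$-polynomial is $\sigma$-invariant. When $n$ is odd, Theorem~\ref{theorem_manifold}(i) asserts that $\Psi(P)$ is itself a $\cd$-polynomial, hence $\sigma(\Psi(P)) = \Psi(P)$ and the defects vanish, giving $h_{S} = h_{\overline{S}}$ for every $S$.

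When $n$ is even, Theorem~\ref{theorem_manifold}(ii) decomposes
$\Psi(P) = (1 - \chi(M)/2)(\av - \bv)^{n+1} + (\chi(M)/2)\cv^{n+1} + \Phi$
with $\Phi$ a $\cd$-polynomial. The $\cv^{n+1}$ and $\Phi$ summands are $\sigma$-fixed, while $\sigma((\av - \bv)^{n+1}) = (\bv - \av)^{n+1} = -(\av - \bv)^{n+1}$ because $n+1$ is odd. Therefore $\Psi(P) - \sigma(\Psi(P)) = (2 - \chi(M))(\av - \bv)^{n+1}$, and expanding $(\av - \bv)^{n+1} = \sum_{S} (-1)^{|S|} u_{S}$ followed by matching coefficients gives $h_{S} - h_{\overline{S}} = (-1)^{|S|}(2 - \chi(M))$. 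The argument is a short coefficient comparison once Theorem~\ref{theorem_manifold} is in hand; the only point requiring attention is the parity bookkeeping, namely that the sign $(-1)^{n+1} = -1$ produced by $\sigma$ on $(\av - \bv)^{n+1}$ is exactly what forces the failure of symmetry in the even-dimensional case.
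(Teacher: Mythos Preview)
Your argument is correct. The paper does not spell out a proof of this corollary, leaving it as an immediate consequence of Theorem~\ref{theorem_manifold}; your use of the involution $\sigma$ swapping $\av$ and $\bv$ is exactly the standard way to make that implication explicit, and the parity bookkeeping is handled correctly.
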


For the $n$-dimensional torus Theorem~\ref{theorem_manifold}
can be expressed as follows.
\begin{corollary}
Let $\Omega$ be a regular cell complex whose geometric realization
is the $n$-dimensional torus~$T^{n}$. Then the $\ab$-index
of the face poset $P$ of $\Omega$ has the following form:
$$      \Psi(P) 
    = 
        (\av-\bv)^{n+1}
      + 
        \Phi   ,   $$
where $\Phi$ is a homogeneous $\cd$-polynomial of degree $n+1$
and $\Phi$ does not contain the term $\cv^{n+1}$.
\end{corollary}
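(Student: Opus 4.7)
The plan is to specialize Theorem~\ref{theorem_manifold} to the case $M = T^{n}$. The only topological input required is the Euler characteristic: since $T^{n} = (S^{1})^{n}$ and Euler characteristic is multiplicative under Cartesian product, $\chi(T^{n}) = \chi(S^{1})^{n} = 0$ for every $n \geq 1$.

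When $n$ is even the corollary follows immediately by substitution. The coefficient $1 - \chi(M)/2$ in the formula of Theorem~\ref{theorem_manifold}(ii) becomes $1$ and the $\chi(M)/2$ coefficient of $\cv^{n+1}$ vanishes, yielding exactly the claimed form $\Psi(P) = (\av-\bv)^{n+1} + \Phi$.

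When $n$ is odd, Theorem~\ref{theorem_manifold}(i) instead tells us that $P$ is Eulerian, so $\Psi(P)$ is already a homogeneous $\cd$-polynomial of degree $n+1$. The crucial observation in this case is that $(\av-\bv)^{n+1}$ itself lies in $\Zzz\langle\cv,\dv\rangle$ when $n+1$ is even: from $(\av-\bv)^{2} = \cv^{2} - 2\dv$ we obtain $(\av-\bv)^{n+1} = (\cv^{2}-2\dv)^{(n+1)/2}$, whose coefficient of $\cv^{n+1}$ is $1$. To complete the argument I would check that $\Psi(P)$ also has $\cv^{n+1}$ coefficient equal to $1$: every $\cd$-monomial other than $\cv^{n+1}$ contains a $\dv$ factor and thus some $\bv$, so the $\cv^{n+1}$ coefficient in the $\cd$-expansion of $\Psi(P)$ equals the $\av^{n+1}$ coefficient in its $\ab$-expansion, which is $h_{\emptyset} = f_{\emptyset} = 1$. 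Setting $\Phi := \Psi(P) - (\av-\bv)^{n+1}$ then produces a $\cd$-polynomial of degree $n+1$ with no $\cv^{n+1}$ term.

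No step is genuinely difficult; the argument is essentially a numerical consequence of $\chi(T^{n}) = 0$. The one point requiring a moment of care is the odd case, where one must notice that $(\av-\bv)^{n+1}$ already sits inside the $\cd$-subalgebra, so that subtracting it off preserves the $\cd$-property of the remainder, and verify the matching of the $\cv^{n+1}$ coefficients — a minor computation that collapses to the trivial identity $f_{\emptyset} = 1$.
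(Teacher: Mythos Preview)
Your proof is correct and follows the same approach as the paper: specialize Theorem~\ref{theorem_manifold} using $\chi(T^{n})=0$, and in the odd case observe that $(\av-\bv)^{n+1}=(\cv^{2}-2\dv)^{(n+1)/2}$ already lies in $\Zzz\langle\cv,\dv\rangle$. Your explicit verification that the $\cv^{n+1}$ coefficient of $\Psi(P)$ equals $1$ (via $h_{\emptyset}=1$) is a detail the paper leaves implicit, so your write-up is in fact slightly more complete.
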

\begin{proof}
When $n$ is even this is Theorem~\ref{theorem_manifold}.
When $n$ is odd this is Theorem~\ref{theorem_manifold}
together with the two facts that
$\chi(T^{n}) = 0$ and
$(\av-\bv)^{n+1} = (\cv^{2} - 2 \dv)^{(n+1)/2}$.
\end{proof}

\section{Toric arrangements}
\label{section_toric}

\subsection{Toric subspaces and arrangements}

\begin{figure}
\setlength{\unitlength}{0.5mm}
\begin{center}
\begin{picture}(100,100)(-20,-20)

\put(0,-20){\line(0,1){100}}
\put(-20,0){\line(1,0){100}}


\put(0,60){\line(1,0){60}}
\put(60,0){\line(0,1){60}}

\put(-20,-10){\line(2,1){100}}
\put(-10,-20){\line(1,2){50}}

\thicklines

\put(0,0){\line(2,1){60}}
\put(0,0){\line(1,2){30}}
\put(0,30){\line(2,1){60}}
\put(30,0){\line(1,2){30}}

\put(28,28){{\small 1}}

\put(53,35){{\small 2}}
\put(9,46){{\small 2}}
\put(21,4){{\small 2}}

\put(4,21){{\small 3}}
\put(46,9){{\small 3}}
\put(35,53){{\small 3}}

\end{picture}
\hspace*{15 mm}
\begin{picture}(100,100)(-5,-5)

\put(45,0){\circle*{3}}
\multiput(30,30)(30,0){2}{\put(0,0){\circle*{3}}}
\multiput(15,60)(30,0){3}{\put(0,0){\circle*{3}}}
\put(45,90){\circle*{3}}

\put(45,0){\line(-1,2){15}}
\put(45,0){\line(1,2){15}}

\put(30,30){\line(-1,2){15}}
\put(30,30){\line(1,2){15}}
\put(30,30){\line(3,2){45}}

\put(60,30){\line(-3,2){45}}
\put(60,30){\line(-1,2){15}}
\put(60,30){\line(1,2){15}}

\put(45,90){\line(-1,-1){30}}
\put(45,90){\line(0,-1){30}}
\put(45,90){\line(1,-1){30}}
\end{picture}
\end{center}
\caption{A toric line arrangement
which subdivides
the torus $T^{2}$ into a non-regular cell complex
and its intersection poset.}
\label{figure_toric_one}
\end{figure}
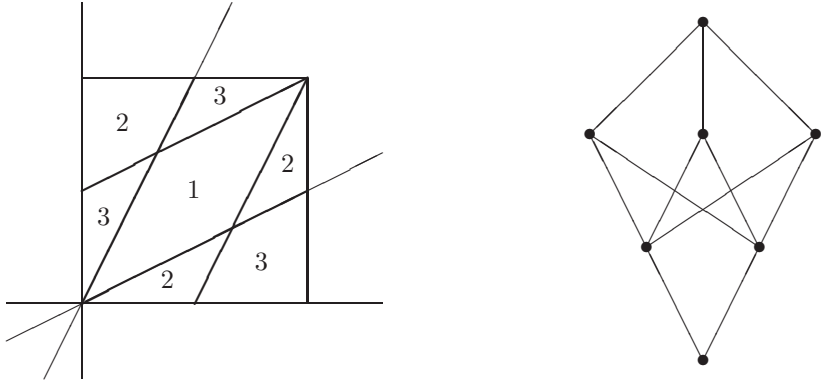

The $n$-dimensional torus $T^{n}$ is defined as the quotient
$\Rrr^{n}/\Zzz^{n}$.
Recall that the torus $T^{n}$ is an abelian group.
When identifying the torus $T^{n}$ with the set $[0,1)^{n}$,
the group structure is componentwise addition modulo $1$.
\begin{lemma}
Let $V$ be a $k$-dimensional affine subspace
in $\Rrr^{n}$ with rational coefficients. That is, $V$ has the form
$$     V 
   =
       \{ \vec{v} \in \Rrr^{n} \:\: : \:\: A \vec{v} = \vec{b}\} , $$
where the matrix $A$ has rational entries and the vector $\vec{b}$
is allowed to have real entries.
Then the image of $V$ under the
quotient map $\Rrr^{n} \to \Rrr^{n}/\Zzz^{n}$,
denoted by $\overline{V}$,
is a $k$-dimensional torus.
\end{lemma}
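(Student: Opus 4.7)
The plan is to reduce the statement to the linear case by translation, and then realize $\overline{V}$ as a compact quotient of a real vector space by a full-rank sublattice. First I would fix a point $\vec{v}_{0} \in V$ and write $V = \vec{v}_{0} + W$, where $W = \{\vec{w} \in \Rrr^{n} : A\vec{w} = \vec{0}\}$ is the $k$-dimensional linear subspace underlying $V$; it is cut out by the same rational matrix $A$. Since the quotient map $\Rrr^{n} \to T^{n}$ is a group homomorphism and translation is a self-homeomorphism of the topological group $T^{n}$, the image $\overline{V}$ is the coset of $\overline{W}$ through the class of $\vec{v}_{0}$. Hence it suffices to prove that $\overline{W}$ is a $k$-dimensional torus.

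The heart of the argument is to show that $\Lambda := W \cap \Zzz^{n}$ is a lattice of full rank $k$ in $W$. Because $A$ has rational entries, after clearing denominators we may assume $A$ is an integer matrix of the same rank, and $W = \ker(A)$ viewed as a map $\Rrr^{n} \to \Rrr^{m}$. Applying the Smith normal form to $A$ yields unimodular changes of basis in $\Zzz^{n}$ and $\Zzz^{m}$ that diagonalize $A$; from this decomposition one reads off that $\ker(A) \cap \Zzz^{n}$ is a free abelian group of rank $n - \mathrm{rank}(A) = k$ whose real span is all of $W$. (Equivalently, one may choose a basis of $W$ consisting of rational vectors and multiply through by a common denominator to obtain $\Rrr$-linearly independent integer vectors inside $\Lambda$.)

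With the lattice step in hand the conclusion is straightforward. The restriction of $\Rrr^{n} \to T^{n}$ to $W$ is a continuous group homomorphism $q : W \to T^{n}$ with kernel $\Lambda$ and image $\overline{W}$, and it descends to a continuous injective homomorphism $\widetilde{q} : W/\Lambda \to T^{n}$ with the same image. Since $\Lambda$ is a rank-$k$ lattice in the $k$-dimensional real vector space $W$, the quotient $W/\Lambda$ is isomorphic as a topological group to $\Rrr^{k}/\Zzz^{k} = T^{k}$; in particular it is compact. A continuous injection from a compact space into the Hausdorff space $T^{n}$ is a homeomorphism onto its image, so $\overline{W}$ is homeomorphic (indeed isomorphic as a topological group) to $T^{k}$, completing the proof.

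The main obstacle is the full-rank lattice claim, and this is precisely the place where rationality of $A$ is essential: for an irrational subspace one can have $W \cap \Zzz^{n} = \{\vec{0}\}$, in which case $\overline{W}$ is dense in $T^{n}$ rather than a subtorus (the standard example being an irrational line on $T^{2}$). The Smith-normal-form argument — or the equivalent clear-denominators trick — is what converts the rational hypothesis into the structural statement that $\Lambda$ is a lattice of full rank in $W$.
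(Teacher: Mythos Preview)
Your proof is correct and follows essentially the same route as the paper: reduce to the linear case by translation, then argue that $W\cap\Zzz^{n}$ has full rank $k$ in $W$ (using the rationality of $A$), and conclude that $\overline{W}\cong W/(W\cap\Zzz^{n})\cong\Rrr^{k}/\Zzz^{k}$. Your version is in fact more careful than the paper's---you justify the rank claim via Smith normal form (the paper simply asserts it) and you close the topological step with the compact--Hausdorff argument---but the underlying strategy is identical.
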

\begin{proof}
By translating $V$, we may assume that
the vector $\vec{b}$ is the zero vector,
and therefore
$V$ is a subspace.
In this case, the intersection of $V$ with the integer lattice $\Zzz^{n}$
is a subgroup of the free abelian group $\Zzz^{n}$.
Since the matrix $A$ has all rational entries,
the rank of this subgroup is $k$,
that is, the subgroup is isomorphic to $\Zzz^{k}$.
Hence the image $\overline{V}$
is the quotient $V/(V \cap \Zzz^{n})$,
which is isomorphic to
the quotient $\Rrr^{k}/\Zzz^{k}$, that is,
a $k$-dimensional torus.
\end{proof}

We call the image $\overline{V}$ 
a {\em toric subspace} of the torus $T^{n}$
because it is homeomorphic to some $k$-dimensional torus.
When we remove the condition that the matrix $A$ is rational, 
the image is not necessarily homeomorphic to a torus.

The intersection of two toric subspaces is in general not a toric subspace,
but instead is the disjoint union of a finite number of
toric subspaces.
For two affine subspaces $V$ and $W$ with rational coefficients,
we have that $\overline{V \cap W} \subseteq \overline{V} \cap \overline{W}$.
In general, this containment is strict.

Define the translate of a toric subspace $U$ by a point $x$
on the torus to be the toric subspace $U + x = \{ u+x  : u \in U \}$.
Alternatively, one may lift the toric subspace to an affine subspace
in Euclidean space, translate it and then map back to the torus.
Then for two toric subspaces $V$ and $W$, their intersection
has the form
$$     V \cap W = \bigcup_{p=1}^{r} (U + x_{p})  , $$
where $U$ is a toric subspace, $r$ is a non-negative integer
and $x_{1}, \ldots, x_{r}$ are points on the torus $T^{n}$.

\begin{figure}
\setlength{\unitlength}{0.5mm}
\begin{center}
\begin{picture}(100,100)(-20,-20)

\put(0,-20){\line(0,1){100}}
\put(-20,0){\line(1,0){100}}

\put(0,60){\line(1,0){60}}
\put(60,0){\line(0,1){60}}

\put(-20,-10){\line(2,1){100}}
\put(-6,-18){\line(1,3){32}}
\put(-20,12){\line(1,0){100}}

\thicklines

\put(0,0){\line(2,1){60}}
\put(0,30){\line(2,1){60}}
\put(0,0){\line(1,3){20}}
\put(20,0){\line(1,3){20}}
\put(40,0){\line(1,3){20}}
\put(0,12){\line(1,0){60}}
\end{picture}
\hspace*{15 mm}
\begin{picture}(100,100)(-5,-5)

\put(45,0){\circle*{3}}
\multiput(15,30)(30,0){3}{\put(0,0){\circle*{3}}}
\multiput(0,60)(15,0){7}{\put(0,0){\circle*{3}}}
\put(45,90){\circle*{3}}

\put(45,0){\line(-1,1){30}}
\put(45,0){\line(0,1){30}}
\put(45,0){\line(1,1){30}}

\put(15,30){\line(-1,2){15}}
\put(15,30){\line(0,1){30}}
\put(15,30){\line(1,2){15}}
\put(15,30){\line(1,1){30}}
\put(15,30){\line(3,2){45}}

\put(45,30){\line(-3,2){45}}
\put(45,30){\line(-1,1){30}}
\put(45,30){\line(-1,2){15}}
\put(45,30){\line(0,1){30}}
\put(45,30){\line(1,2){15}}
\put(45,30){\line(1,1){30}}
\put(45,30){\line(3,2){45}}

\put(75,30){\line(-1,2){15}}
\put(75,30){\line(0,1){30}}
\put(75,30){\line(1,2){15}}

\put(45,90){\line(-3,-2){45}}
\put(45,90){\line(-1,-1){30}}
\put(45,90){\line(-1,-2){15}}
\put(45,90){\line(0,-1){30}}
\put(45,90){\line(1,-2){15}}
\put(45,90){\line(1,-1){30}}
\put(45,90){\line(3,-2){45}}
\end{picture}
\end{center}
\caption{A toric line arrangement
and its intersection poset.}
\label{figure_toric_two}
\end{figure}
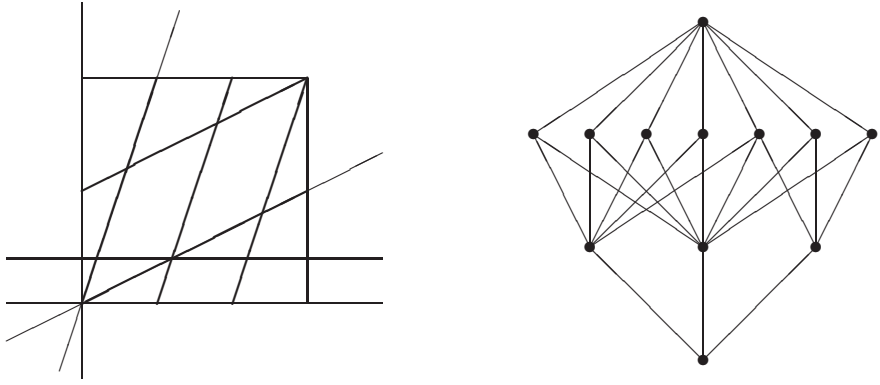

A {\em toric hyperplane arrangement} $\HH = \{H_{1}, \ldots, H_{m}\}$
is a finite collection of toric hyperplanes.
Define the {\em intersection poset} $\PP$
of a toric arrangement
to be the set of all connected components
arising from
all possible intersections of the toric hyperplanes,
that is, 
all connected components of
$\bigcap_{i \in S} H_{i}$
where $S \subseteq \{1, \ldots, m\}$,
together with the empty set.
Order the elements of
the intersection poset $\PP$ by reverse inclusion,
that is, the torus $T^{n}$ is the minimal element of $\PP$
corresponding to the empty intersection,
and the empty set is the maximal element.
A toric subspace $V$ is contained in the intersection
poset $\PP$ if there are
toric hyperplanes $H_{i_{1}}, \ldots, H_{i_{k}}$
in the arrangement such that
$V \subseteq H_{i_{1}} \cap \cdots \cap H_{i_{k}}$
and there is no toric subspace $W$
satisfying
$V \subset W \subseteq H_{i_{1}} \cap \cdots \cap H_{i_{k}}$.
In other words,
$V$ has to be a maximal toric subspace in some intersection
of toric hyperplanes from the arrangement.

The notion of using the intersection poset can be found
in
work of Zaslavsky, where he considers
topological dissections~\cite{Zaslavsky_paper}.
In this setting there is not an intersection
lattice, but rather an intersection poset.

To every toric hyperplane arrangement $\HH = \{H_{1}, \ldots, H_{m}\}$
there is an associated periodic hyperplane arrangement $\widetilde{\HH}$
in the Euclidean space $\Rrr^{n}$. Namely, the inverse
image of the toric hyperplane~$H_{i}$ under the quotient map
$\Rrr^{n} \to \Rrr^{n}/\Zzz^{n}$ is the union of parallel integer
translates of a real hyperplane.
Let~$\widetilde{\HH}$
be the collection of all these integer translates.
Observe that every face of the toric arrangement~$\HH$
can be lifted to a parallel class of faces in
the periodic real arrangement $\widetilde{\HH}$.

For a toric hyperplane arrangement $\HH$
define the {\em toric characteristic polynomial}
to be
$$   \chi(\HH; t)
   =
     \sum_{\onethingatopanother{x \in \PP}{x \neq \emptyset}}
             \mu(\hz,x) \cdot t^{\dim(x)}  .   $$
Also for a toric hyperplane arrangement $\HH$ define
$T_{t}$ to be the face poset of the induced subdivision
of the torus $T^{n}$. Note that $T_{t}$ is a graded poset
of rank $n+2$: the minimal element $\hz$ is the empty face,
the maximal element $\ho$ is the torus and the rank of
face $x$ is given by $\dim(x) + 1$.

\begin{example}
{\rm
Consider the line arrangement consisting
of the two lines $y = 2 \cdot x$
and $x = 2 \cdot y$ in the plane $\Rrr^{2}$.
In $\Rrr^{2}$ they intersect in one point,
namely
the origin,
whereas on the torus $T^{2}$ they intersect
in three points,
namely
$(0,0)$, $(2/3,1/3)$ and $(1/3,2/3)$.
The characteristic polynomial
is given by $\chi(\HH;t) = t^{2} - 2 \cdot t + 3$.
However, this arrangement is not
regular, since the induced subdivision of $T^{2}$
is not regular. The boundary of each region
is a wedge of two circles.
See Figure~\ref{figure_toric_one}.
}
\label{example_toric_one}
\end{example}

\begin{example}
{\rm
Consider the line arrangement consisting
of the three lines $y = 3 \cdot x$,
$x = 2 \cdot y$ and
$y = 1/5$.
It subdivides the torus into a regular cell complex.
The subdivision and the associated
intersection poset are shown in 
Figure~\ref{figure_toric_two}.
The characteristic polynomial
is given by $\chi(\HH;t) = t^{2} - 3 \cdot t + 8$.
Furthermore, the $\ab$-index of the
subdivision of the torus is given by
$\Psi(T_{t}) = (\av-\bv)^{3} + 7 \cdot \mdc + 8 \cdot \mcd$,
as the following calculation shows.
\newcommand{\ts}{\:\:\:\:\:\:}
$$
  \begin{array}{c r r c r c c}
 S & f_{S} & h_{S} & u_{S} & (\av-\bv)^{3} 
   & 7 \cdot \mdc & 8 \cdot \mcd \\ \hline
\emptyset &  1 &  1 & \av\av\av &  1 \ts & 0 & 0 \\
\{1\}     &  7 &  6 & \bv\av\av & -1 \ts & 7 & 0 \\
\{2\}     & 15 & 14 & \av\bv\av & -1 \ts & 7 & 8 \\
\{3\}     &  8 &  7 & \av\av\bv & -1 \ts & 0 & 8 \\
\{1,2\}   & 30 &  9 & \bv\bv\av &  1 \ts & 0 & 8 \\
\{1,3\}   & 30 & 16 & \bv\av\bv &  1 \ts & 7 & 8 \\
\{2,3\}   & 30 &  8 & \av\bv\bv &  1 \ts & 7 & 0 \\
\{1,2,3\} & 60 & -1 & \bv\bv\bv & -1 \ts & 0 & 0 \\
  \end{array} $$
Recall that
$\mdc = \maba + \nabb + \nbaa + \mbab$
and
$\mcd = \naab + \maba + \mbab + \nbba$.
Here in the last three columns we indicate
the contribution of a given term to each $\ab$-monomial.
Observe that the sum of the last three columns gives
the flag $h$-vector entries.
}
\label{example_toric_two}
\end{example}

We now give a natural interpretation of the toric characteristic polynomial.
Recall that the intersection of toric subspaces
is the disjoint union of toric subspaces that are translates of each other.
Let~$G$ be the collection of finite intersections of toric
subspaces of the $n$-dimensional torus $T^{n}$,
that is, $G$ consists of sets of the form
$V = W_{1} \capdots W_{q}$, where $W_{1}, \ldots, W_{q}$ are
toric subspaces.
Such a set~$V$ can be written as a union,
more precisely,
$V = \bigcup_{p=1}^{r} (U + x_{p})$,
where $U$ is a toric subspace, $r$ a non-negative integer,
and $x_{1}, \ldots, x_{r}$ are points on the torus.
Observe that the empty set $\emptyset$
and the torus~$T^{n}$ belong to $G$.
Furthermore, $G$ is closed under finite intersections.
Let $L$ be the distributive lattice consisting of all
subsets of the torus $T^{n}$ that are obtained from
the collection~$G$ by finite intersections, finite unions
and complements. The set $G$ is the generating set for the lattice~$L$.
A {\em valuation}~$v$ is a function on the lattice $L$
to an abelian group
satisfying
$v(\emptyset) = 0$ and $v(A) + v(B) = v(A \cap B) + v(A \cup B)$
for all sets $A, B \in L$.

The next theorem is analogous to Theorem~2.1
in~\cite{Ehrenborg_Readdy_valuation_1}.
The proof here is more involved
due to the fact that the collection of
toric subspaces is not closed under intersections.
\begin{theorem}
There is a valuation $v$ on the distributive lattice~$L$
to $\Zzz[t]$
such that 
for a $k$-dimensional toric subspace $V$
its valuation is $v(V) = t^{k}$.
\end{theorem}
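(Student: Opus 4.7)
The plan is to define $v$ first on the family $G$ and then extend it to $L$ via Groemer's integral theorem. Since $G$ contains $\emptyset$ and is closed under finite intersection, it suffices to check two things: (a) the proposed formula is well-defined on $G$, and (b) $v$ satisfies the inclusion-exclusion identity
$$v\bigl(F_{1} \cup \cdots \cup F_{n}\bigr) = \sum_{\emptyset \neq I \subseteq \{1, \ldots, n\}} (-1)^{|I|+1} \cdot v\biggl(\bigcap_{i \in I} F_{i}\biggr)$$
whenever $F_{1}, \ldots, F_{n}$ and their union all belong to $G$.

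For well-definedness, I would set $v(V) = r \cdot t^{k}$ whenever $V = \bigcup_{p=1}^{r}(U + x_{p}) \in G$ is a disjoint union of $r$ translates of a $k$-dimensional toric subspace $U$. Each translate $U + x_{p}$ is a connected $k$-dimensional torus, so $r$ equals the number of connected components of $V$ and $k$ equals its topological dimension; both are intrinsic invariants of $V$, and consequently the value $v(V)$ does not depend on the chosen representation as an intersection of toric hyperplanes.

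For inclusion-exclusion, the strategy is to reduce to the analogous Euclidean statement, namely Theorem~2.1 in~\cite{Ehrenborg_Readdy_valuation_1}. Each $V \in G$ lifts under the quotient map $\pi : \Rrr^{n} \to T^{n}$ to a $\Zzz^{n}$-periodic set $\pi^{-1}(V) \subseteq \Rrr^{n}$, which is a union of integer translates of affine subspaces with rational coefficients. Restricting to the half-open fundamental domain $[0,1)^{n}$ produces a finite family of bounded pieces lying in the lattice generated by affine subspaces of $\Rrr^{n}$, and the desired toric inclusion-exclusion identity should pull back to the Euclidean identity applied to these pieces.

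The main obstacle is reconciling Euclidean and toric counts. A single connected $k$-dimensional toric subspace lifts to a $\Zzz^{n-k}$-indexed family of parallel affine $k$-planes in $\Rrr^{n}$, and the fundamental domain meets these in finitely many plates. One must verify that the Euclidean valuation aggregates the contributions of these plates into a single $t^{k}$, matching the single connected component on the torus. This careful combinatorial bookkeeping --- tracking how many Euclidean plates accompany each toric component --- is the technical heart of the argument and is precisely what makes the toric case more involved than its Euclidean predecessor. Once the correspondence is verified for connected toric subspaces, it extends linearly to disjoint unions in $G$, and Groemer's theorem furnishes the extension of $v$ to all of $L$.
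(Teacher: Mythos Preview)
Your framework matches the paper's: define $v\bigl(\bigcup_{p=1}^r (U+x_p)\bigr) = r\cdot t^k$ on $G$ and extend via Groemer's integral theorem. The gap is in your inclusion--exclusion step. You propose lifting to $\Rrr^n$, restricting to $[0,1)^n$, and invoking the Euclidean valuation of~\cite{Ehrenborg_Readdy_valuation_1}; but the bounded plates you obtain do \emph{not} lie in the distributive lattice generated by affine subspaces of $\Rrr^n$. That lattice consists of finite Boolean combinations of full (unbounded) flats, and no such combination produces a bounded set of positive dimension such as a line segment. Hence the Euclidean valuation of Theorem~2.1 in~\cite{Ehrenborg_Readdy_valuation_1} is simply undefined on your plates, and there is no ``aggregation'' to check. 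You sense that something is delicate here (your ``main obstacle''), but the obstacle is fatal to the route as written, not merely a matter of bookkeeping.

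The paper avoids lifting entirely and argues intrinsically on the torus. Its key observation is that if $V_1,\dots,V_m\in G$ and their union is a \emph{connected} toric subspace $W$, then $W=V_i$ for some $i$; inclusion--exclusion is then trivial. For the general case, where $V_1\cup\cdots\cup V_m = \bigcup_{p=1}^r (U+x_p)$ is a disjoint union of translates, one intersects everything with each translate $U+x_p$, applies the connected case to each, and sums using the already-established disjoint-union additivity of $v$ on $G$. This is short, requires no passage to $\Rrr^n$, and bypasses precisely the difficulty you flagged.
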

\begin{proof}
Define the function $v$ on the generating set $G$ by
$$     v\left(\bigcup_{p=1}^{r} (U + x_{p})\right)
    =
       r \cdot t^{k}   ,  $$
where we assume that $U$ is a $k$-dimensional toric subspace
and the $r$ translates
$U + x_{1}, \ldots, U + x_{r}$ are pairwise disjoint.
Observe that the function $v$ is additive with respect to
disjoint unions, that is, for elements
$V_{1}, \ldots, V_{m}$ in $G$
which are pairwise disjoint
and 
$V_{1} \cupdots V_{m} \in G$.
In this case, each $V_{i}$ is a disjoint union of translates of
the same affine subspace $U$ and both sides of the identity
$v(V_{1}) + \cdots + v(V_{m})
    =
 v(V_{1} \cupdots V_{m})$
count the number of translates of $U$ 
times $t^{\dim(U)}$.

Groemer's integral theorem~\cite{Groemer}
(see also~\cite[Theorem~2.2.1]{Klain_Rota})
states that a function $v$
defined on a generating set $G$ extends to a valuation
on the distributive lattice generated by $G$ if
for all $V_{1}, \ldots, V_{m}$ in $G$ such that
$V_{1} \cupdots V_{m} \in G$, the inclusion-exclusion
formula holds:
\begin{equation}
v(V_{1} \cupdots V_{m})
  =
\sum_{i} v(V_{i})
  -
\sum_{i<j} v(V_{i} \cap V_{j})
  +
\cdots .
\label{equation_inclusion_exclusion}
\end{equation}
To verify this for our generating set $G$,
first consider the case when 
the union $V_{1} \cupdots V_{m}$
is a toric subspace.
This case implies that
$V_{1} \cupdots V_{m} = V_{i}$ for some index $i$.
It then follows that the inclusion-exclusion
formula~(\ref{equation_inclusion_exclusion}) holds
trivially.

Before considering the general case,
we introduce some notation.
For $S$ a non-empty subset of the index set $\{1, \ldots, m\}$,
let $V_{S} = \bigcap_{i \in S} V_{i}$.
Equation~(\ref{equation_inclusion_exclusion})
can then be written as
$$
v(V_{1} \cupdots V_{m})
  =
\sum_{S} (-1)^{|S|-1} \cdot v(V_{S})  ,
$$
where the sum ranges over non-empty subsets $S$ of $\{1, \ldots, m\}$.
Now assume that 
$V_{1} \cupdots V_{m}$ is the disjoint union
$(U + x_{1}) \cupdots (U + x_{r})$.
Let $V_{S,p}$ denote the intersection
$V_{S} \cap (U + x_{p})$.
Observe that
$U + x_{p} = \bigcup_{i=1}^{m} V_{\{i\},p}$
and since $U + x_{p}$ is itself a toric subspace,
we have already proved that
the inclusion-exclusion formula~(\ref{equation_inclusion_exclusion})
holds for this union.
Hence we have
\begin{eqnarray*}
v(V_{1} \cupdots V_{m})
  & = &
\sum_{p=1}^{r} v(U + x_{p}) \\
  & = &
\sum_{p=1}^{r} \sum_{S} (-1)^{|S|-1} \cdot v(V_{S,p}) \\
  & = &
\sum_{S} (-1)^{|S|-1} \cdot \sum_{p=1}^{r} v(V_{S,p}) \\
  & = &
\sum_{S} (-1)^{|S|-1} \cdot v(V_{S}) ,
\end{eqnarray*}
where $S$ ranges over all non-empty subsets of $\{1, \ldots, m\}$.
The last step follows since the
union $V_{S} = \bigcup_{p=1}^{r} V_{S,p}$
is pairwise disjoint.
\end{proof}

By M\"obius inversion we directly have the following theorem.
The proof is standard.  See the 
references~\cite{Athanasiadis,Chen,Ehrenborg_Readdy_valuation_1,Jozefiak_Sagan}.
\begin{theorem}
The characteristic polynomial of a toric arrangement is given by
$$  \chi(\HH) = v\left( T^{n} - \bigcup_{i=1}^{m} H_{i} \right) .  $$
\label{theorem_characteristic}
\end{theorem}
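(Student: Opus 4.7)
My plan is to stratify the torus by the intersection poset and then apply Möbius inversion, following the pattern in the references cited.

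For each non-maximal element $x \in \PP$ (so $x \neq \emptyset$), I would define the \emph{open stratum}
\[
   x^{\circ} = x \setminus \bigcup_{\substack{y \in \PP \\ y > x,\ y \neq \emptyset}} y.
\]
Since each such $y$ is a toric subspace and the differences/unions of elements of $G$ lie in the distributive lattice $L$, we have $x^{\circ} \in L$ and the valuation $v(x^{\circ})$ is well defined. The first step is to check that the open strata partition $T^{n}$: given a point $p \in T^{n}$, let $S(p) = \{i : p \in H_i\}$ and consider the toric subspace $\bigcap_{i \in S(p)} H_i$ (interpreted as $T^{n}$ if $S(p) = \emptyset$). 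The point $p$ lies in a unique connected component $y_p$ of that intersection, $y_p \in \PP$, and by maximality of the intersection defining $y_p$ no element strictly above $y_p$ in $\PP$ contains $p$; hence $p \in y_p^{\circ}$, and uniqueness follows from the same construction.

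Next, because $v$ is a valuation and the decomposition $x = \bigsqcup_{y \geq x,\, y \neq \emptyset} y^{\circ}$ is a finite disjoint union inside $L$, additivity gives
\[
   v(x) = \sum_{\substack{y \in \PP \\ y \geq x,\ y \neq \emptyset}} v(y^{\circ}).
\]
Applying Möbius inversion on the poset $\PP \setminus \{\emptyset\}$ (note that $\mu(\hz, y)$ agrees in $\PP$ and $\PP \setminus \{\emptyset\}$ for $y \neq \emptyset$, since $\hat 1$ is never involved in the recursion for such $y$) yields
\[
   v(x^{\circ}) = \sum_{\substack{y \in \PP \\ y \geq x,\ y \neq \emptyset}} \mu(x, y) \cdot v(y).
\]
Specializing to $x = \hz$ and using $v(y) = t^{\dim(y)}$ for a toric subspace $y$, the right-hand side is precisely $\chi(\HH; t)$. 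Finally I would identify $\hz^{\circ}$ with the complement of the arrangement: every atom of $\PP$ is some $H_i$ (each $H_i$ is a connected toric subspace and thus lies in $\PP$ as a maximal toric subspace of its own intersection), and conversely every $y > \hz$ in $\PP$ is contained in some $H_i$; therefore $\bigcup_{y > \hz,\, y \neq \emptyset} y = \bigcup_{i=1}^{m} H_i$, and so $\hz^{\circ} = T^{n} - \bigcup_{i=1}^{m} H_i$.

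The computational backbone is routine Möbius inversion, exactly as in~\cite{Ehrenborg_Readdy_valuation_1}; the only step that requires genuine care in the toric setting is the stratification argument, because the intersection of two toric subspaces is a disjoint union of translates rather than a single toric subspace. This is precisely why we pass to connected components (the elements of $\PP$) and why the open-stratum decomposition, rather than a naive inclusion-exclusion on the $H_i$, is the right tool. Once the partition $T^{n} = \bigsqcup_{y \in \PP,\ y \neq \emptyset} y^{\circ}$ is in hand, the remainder is formal.
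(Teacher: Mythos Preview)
Your proposal is correct and is precisely the standard M\"obius-inversion argument that the paper invokes but does not spell out: the paper's entire proof reads ``By M\"obius inversion we directly have the following theorem. The proof is standard. See the references~\cite{Athanasiadis,Chen,Ehrenborg_Readdy_valuation_1,Jozefiak_Sagan}.'' You have supplied the details---the stratification $T^{n} = \bigsqcup_{y \neq \emptyset} y^{\circ}$, the additivity of $v$ on this disjoint union, and the identification $\hz^{\circ} = T^{n} - \bigcup_{i} H_{i}$---that the paper leaves to the cited literature, and you have correctly flagged the one toric subtlety (that intersections break into connected components, which is why one inverts over $\PP$ rather than over subsets of $\{1,\ldots,m\}$).
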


When each region is an open ball we can now determine the number of
regions in a toric arrangement.  The proof is analogous to the proofs
in~\cite{Ehrenborg_Readdy_valuation_1,Ehrenborg_Readdy_valuation_2}.

\begin{theorem}
Let $\HH$ be a toric hyperplane arrangement on the $n$-dimensional
torus $T^{n}$ that subdivides the torus into 
regions that are open $n$-dimensional balls.
Then the number of regions in the complement of the arrangement 
is given by $(-1)^{n} \cdot \chi(\HH; t=0)$.
\label{theorem_toric_Zaslavsky_version_1}
\end{theorem}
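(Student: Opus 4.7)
The plan is to combine Theorem~\ref{theorem_characteristic} with the observation that specializing the valuation $v$ at $t = 0$ produces the compactly supported Euler characteristic, which on an open $n$-ball equals $(-1)^{n}$.

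First I would apply Theorem~\ref{theorem_characteristic} to write
$$\chi(\HH; 0) = v\left(T^{n} \setminus \bigcup_{i=1}^{m} H_{i}\right)\bigg|_{t = 0}.$$
By hypothesis the complement is a disjoint union $R_{1} \sqcup \cdots \sqcup R_{N}$ of its $N$ regions, each an open $n$-ball, so additivity of the valuation on disjoint unions in $L$ yields
$$\chi(\HH; 0) = \sum_{j=1}^{N} v(R_{j})\big|_{t = 0}.$$

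The crux is the claim that $v(R_{j})\big|_{t = 0} = (-1)^{n}$ for each region. To establish this, I would observe that the specialization $v(\cdot)\big|_{t = 0}$ is a $\Zzz$-valued valuation on the distributive lattice $L$ and that it agrees with the compactly supported Euler characteristic $\chi_{c}$ on the generating set $G$: for a $k$-dimensional toric subspace $V$, both values equal $1$ when $k = 0$ and $0$ when $k \geq 1$. Groemer's theorem (the same tool invoked in the proof of Theorem~\ref{theorem_characteristic}) guarantees that any valuation on $L$ is determined by its values on $G$, so $v(\cdot)\big|_{t = 0}$ and $\chi_{c}$ coincide throughout $L$. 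Since an open $n$-ball has $\chi_{c} = (-1)^{n}$, the claim follows, and summing gives $\chi(\HH; 0) = N \cdot (-1)^{n}$, which rearranges to the desired formula.

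The main obstacle is the justification that $\chi_{c}$ really is a valuation on the full Boolean algebra $L$ rather than merely a numerical invariant of toric subspaces; this is standard for constructible sets but deserves a careful citation. A purely combinatorial alternative, parallel to~\cite{Ehrenborg_Readdy_valuation_1,Ehrenborg_Readdy_valuation_2}, would bypass $\chi_{c}$ entirely and prove the claim by induction on dimension: express each region as its closure minus its boundary sphere, write the sphere as a disjoint union of cells of the arrangement of strictly smaller dimension, and apply additivity of $v(\cdot)\big|_{t = 0}$ together with the inductive hypothesis.
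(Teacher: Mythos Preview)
Your approach is correct and essentially identical to the paper's: both identify the specialization $v|_{t=0}$ with the Euler valuation on the generating set $G$, extend this identification to all of $L$ by uniqueness of valuations, and then evaluate on the complement using that each open $n$-ball contributes $(-1)^{n}$. The paper simply calls your $\chi_{c}$ the ``Euler valuation $\varepsilon$'' and applies it to the complement (which lies in $L$) \emph{before} splitting into regions, thereby sidestepping the question of whether each individual $R_{j}$ belongs to $L$; reordering your argument the same way removes that concern.
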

\begin{proof}
Observe that the Euler valuation $\varepsilon$
of a $k$-dimensional torus
is given by the Kronecker delta~$\delta_{k,0}$.
Hence the Euler valuation of a toric subspace $V$
of the $n$-dimensional torus $T^{n}$ is given
by setting $t=0$ in the valuation $v(V)$, that
is, $\varepsilon(V) = v(V)|_{t = 0}$.
Since the two valuations $\varepsilon$ and $v|_{t=0}$
are additive with respect to disjoint unions,
they agree for any member of the generating set $G$.
Hence they also agree for any member in the distributive lattice $L$.
In particular, we have that
\begin{equation}
    \varepsilon\left( T^{n} - \bigcup_{i=1}^{m} H_{i} \right) 
  =
    \left. v\left( T^{n} - \bigcup_{i=1}^{m} H_{i} \right)\right|_{t = 0} .
\label{equation_Euler_and_v}
\end{equation}
Since the Euler valuation of an open ball
is $(-1)^{n}$ and
$T^{n} - \bigcup_{i=1}^{m} H_{i}$
is a disjoint union of open balls,
the left-hand side 
of~(\ref{equation_Euler_and_v})
is $(-1)^{n}$ times the number of regions.
The right-hand side is $\chi(\HH; t=0)$ by 
Theorem~\ref{theorem_characteristic}.
\end{proof}

\addtocounter{theorem}{-5}
\begin{continuation}
{\rm
Setting $t=0$ in the characteristic polynomial
in Example~\ref{example_toric_one}
we obtain $3$, which is indeed
is the number of regions of this arrangement.
}
\end{continuation}
\addtocounter{theorem}{4}

We call a toric hyperplane arrangement $\HH = \{H_{1}, \ldots, H_{m}\}$
{\em rational}
if each hyperplane $H_{i}$
is of the form
$\vec{a}_{i} \cdot \vec{x} = b_{i}$
where the vector $\vec{a}_{i}$ has integer entries and
$b_{i}$ is an integer for $1 \leq i \leq m$.
This is equivalent to assuming every constant $b_{i}$ is rational
since every vector $\vec{a}_{i}$ was already assumed to be rational.
In what follows it will be convenient to
assume every coefficient is integral in a given rational
arrangement.

Define
$\lcm(\HH)$ to be
the least common multiple of all the
$n \times n$ minors of the $n \times m$ matrix
$(\vec{a}_{1}, \ldots, \vec{a}_{m})$.
We can now give a different interpretation
of the toric chromatic polynomial
by counting lattice points.
\begin{theorem}
For a rational hyperplane arrangement $\HH$
there exists a constant $k$ such that
for every $q > k$ where $q$ is a multiple of $\lcm(\HH)$,
the toric characteristic polynomial
evaluated at $q$ is given by
the number of lattice points in
$\left( \frac{1}{q} \Zzz \right)^{n}/\Zzz^{n}$
that do not lie on any of the toric hyperplanes $H_{i}$, that is,
$$
    \chi(\HH; t=q)
  =
    \left|
      \left( \frac{1}{q} \Zzz \right)^{n}/\Zzz^{n}
        -
      \bigcup_{i=1}^{m} H_{i}
    \right|    . 
$$
\label{theorem_lattice_points}
\end{theorem}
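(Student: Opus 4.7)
The plan is to introduce a second valuation $v_{q}$ on the distributive lattice $L$ that counts lattice points in $L_{q} = \left(\frac{1}{q}\Zzz\right)^{n}/\Zzz^{n}$, and then to show $v_{q}$ agrees with the specialization at $t = q$ of the polynomial-valued valuation $v$ constructed in the previous theorem, provided $q$ is a sufficiently large multiple of $\lcm(\HH)$. Combining this with Theorem~\ref{theorem_characteristic}, which expresses $\chi(\HH;t)$ as $v\bigl(T^{n} - \bigcup_{i} H_{i}\bigr)$, will yield the desired identity.

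First I would define $v_{q}(S) = |S \cap L_{q}|$ for $S \subseteq T^{n}$. Since $L_{q}$ is a finite set, $v_{q}$ is trivially a valuation on the Boolean algebra of all subsets of $T^{n}$ (inclusion-exclusion for finite sets), hence restricts to a valuation on $L$. The crux is then to show that $v_{q}$ and $v|_{t=q}$ agree on the generating set $G$. By additivity over disjoint unions, it suffices to prove that for any $d$-dimensional toric subspace $U$ and any translate $U + x_{p}$ appearing as a connected component of an intersection of members of $\HH$, the point $x_{p}$ lies in $L_{q}$ and moreover $|(U + x_{p}) \cap L_{q}| = q^{d}$; the disjoint-union formula then gives $v_{q}\bigl(\bigcup_{p}(U+x_{p})\bigr) = r \cdot q^{d} = v\bigl(\bigcup_{p}(U+x_{p})\bigr)|_{t=q}$.

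The technical heart of the argument lies in this lattice point count. Lift $U$ to an affine subspace $\widetilde{U}$ in $\Rrr^{n}$. Since $\HH$ is rational, $\widetilde{U}$ may be taken as the solution set of an integer linear system $A\vec{x} = \vec{b}$, where $A$ is an $(n-d) \times n$ integer matrix of rank $n-d$ whose rows lie in the integer row span of the coefficient matrix of $\HH$. The translation vectors $x_{p}$ parametrize the finitely many $\Zzz^{n}$-orbits in the preimage of $U + x_{p}$; a Smith normal form computation on $A$ shows that their coordinates have denominators dividing some $n \times n$ minor of the coefficient matrix of $\HH$, hence dividing $\lcm(\HH)$, so $x_{p} \in L_{q}$ whenever $\lcm(\HH) \mid q$. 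The same Smith normal form calculation is an Ehrhart-type count showing $|(U + x_{p}) \cap L_{q}| = q^{d}$ once $q$ is a large enough multiple of $\lcm(\HH)$. The threshold constant $k$ in the statement absorbs the residual denominators arising from $\vec{b}$ across all intersections, ensuring every relevant translate is captured by $L_{q}$.

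Once agreement on $G$ is established, the uniqueness part of Groemer's theorem (cited just before Theorem~\ref{theorem_characteristic}) forces $v_{q} = v|_{t=q}$ on all of $L$, and applying this equality to the set $T^{n} - \bigcup_{i=1}^{m} H_{i} \in L$ together with Theorem~\ref{theorem_characteristic} completes the proof. The main obstacle is the middle step: identifying the precise number-theoretic role of $\lcm(\HH)$ in guaranteeing both that $x_{p} \in L_{q}$ and that $|(U + x_{p}) \cap L_{q}| = q^{d}$, which requires a careful Smith normal form analysis of the defining matrix $A$ together with the bookkeeping of the constants $b_{i}$ to pin down the threshold $k$.
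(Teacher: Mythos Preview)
Your approach is correct and closely parallels the paper's, which is extremely brief: it simply asserts that divisibility of $q$ by $\lcm(\HH)$ forces every toric subspace $x \in \PP$ to meet $L_{q}$ in exactly $q^{\dim(x)}$ points, and then invokes M\"obius inversion over $\PP$. Your key lemma---that each connected component of an intersection meets $L_{q}$ in $q^{d}$ points---is exactly this assertion, and your Smith normal form sketch actually supplies more justification than the paper gives.

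The only difference is packaging. You route the conclusion through the valuation framework of the preceding theorems (defining $v_{q}$, matching it with $v|_{t=q}$ on generators, and invoking Groemer together with Theorem~\ref{theorem_characteristic}), whereas the paper bypasses this machinery and applies M\"obius inversion directly to the counts $|x \cap L_{q}|$. These are equivalent arguments: the M\"obius inversion over $\PP$ is precisely what the valuation identity encodes once restricted to the sublattice generated by the $H_{i}$. One small point: you should be explicit that you only need agreement of $v_{q}$ and $v|_{t=q}$ on the sublattice generated by the hyperplanes of $\HH$ (not on all of $G$), since that is where $T^{n} - \bigcup_{i} H_{i}$ lives; your check on connected components of intersections from $\HH$ suffices for exactly this.
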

The condition that $q$ is a multiple of $\lcm(\HH)$
implies that every subspace $x$
in the intersection poset~$\PP$
intersects the toric lattice
$\left(\frac{1}{q} \Zzz \right)^{n}/\Zzz^{n}$
in exactly $q^{\dim(x)}$ points.
Theorem~\ref{theorem_lattice_points}
now follows by M\"obius inversion.
This theorem
is the toric analogue of
Athanasiadis'
finite field method.
See especially \cite[Theorem~2.1]{Athanasiadis_II}.

In the case when $\lcm(\HH) = 1$, the toric arrangement 
$\HH$ is
called {\em unimodular}. 
Novik, Postnikov and Sturmfels~\cite{Novik_Postnikov_Sturmfels}
state Theorem~\ref{theorem_toric_Zaslavsky_version_1}
in the special case of unimodular arrangements.
Their first proof is based upon Zaslavsky's result on the number
of bounded regions in an affine arrangement.
The second proof, due to Vic Reiner, is equivalent to our
proof for arbitrary toric arrangements.
See also the paper~\cite{Zaslavsky_paper}
by Zaslavsky, where more general arrangement
are considered.

\subsection{Graphical arrangements}

We digress in this subsection to discuss an application
to graphical arrangements.
For a graph $G$ on the vertex set $\{1, \ldots, n\}$
define the graphical arrangement $\HH_{G}$ to
be the collection of hyperplanes of the form $x_{i} = x_{j}$
for each edge $ij$ in the graph $G$.
\begin{corollary}
For a connected graph $G$ on $n$ vertices the regions
in the complement
of the graphical arrangement $\HH_{G}$ on the torus $T^{n}$
are each homotopy equivalent to the $1$-dimensional torus $T^{1}$.
Furthermore, the number of regions is given by
$(-1)^{n-1}$ times the linear coefficient of the chromatic
polynomial of $G$.
\end{corollary}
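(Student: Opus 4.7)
The plan is to exploit the free diagonal $T^{1}$-action on $T^{n}$ to reduce to an essentialized arrangement on $T^{n-1}$, where Theorem~\ref{theorem_toric_Zaslavsky_version_1} applies directly. Let $\Delta \subseteq T^{n}$ denote the diagonal circle $\{(t, t, \ldots, t) : t \in T^{1}\}$, which acts freely on $T^{n}$ by translation and preserves every toric hyperplane $\{x_{i} = x_{j}\}$ of $\HH_{G}$. The quotient map $\pi : T^{n} \to T^{n}/\Delta \cong T^{n-1}$ is a principal $T^{1}$-bundle, and $\HH_{G}$ is the $\pi$-pullback of a toric arrangement $\HH'$ on $T^{n-1}$. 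In the coordinates $y_{j} = x_{j} - x_{1}$ on the quotient, $\HH'$ consists of the hyperplanes $\{y_{j} = 0\}$ for $1j \in E(G)$ together with $\{y_{i} = y_{j}\}$ for $ij \in E(G)$ with $i, j > 1$, and connectedness of $G$ makes $\HH'$ essential.

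The key technical step is to show every region of $\HH'$ on $T^{n-1}$ is an open $(n-1)$-ball. Lifting to the universal cover $\Rrr^{n-1}$, one obtains a periodic hyperplane arrangement. For any region $R$ of this cover arrangement and any direction $d$ in its recession cone, the periodicity confines each linear quantity $y_{i}-y_{j}$ (or $y_{j}$) to an open interval of length one along $R$, forcing $d_{i}-d_{j}=0$ (respectively $d_{j}=0$); propagating these equalities through the edges of the connected graph $G$ yields $d=0$. Hence $R$ is a bounded convex open polytope, and its translation stabilizer in the period lattice is trivial, so $R$ embeds homeomorphically into $T^{n-1}$ as an open $(n-1)$-ball.

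Since each region of $\HH'$ is contractible and $\pi$ is a principal $T^{1}$-bundle, the preimage in $T^{n}$ of any region of $\HH'$ is a trivial $T^{1}$-bundle over an open ball, hence homotopy equivalent to $T^{1}$; this gives the first assertion. Also, $\pi$ induces a bijection between regions of $\HH_{G}$ and regions of $\HH'$, so Theorem~\ref{theorem_toric_Zaslavsky_version_1} applied to $\HH'$ yields that the number of regions equals $(-1)^{n-1} \chi(\HH'; t=0)$. The intersection poset of $\HH_{G}$ on $T^{n}$ is the lattice of connected partitions of $G$: the intersection $\bigcap_{ij \in F}\{x_{i} = x_{j}\}$ is a single toric subspace corresponding to the partition of $\{1, \ldots, n\}$ into connected components of $(V, F)$. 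Quotienting by $\Delta$ preserves this poset while decreasing every dimension by one, so $\chi(\HH'; t) = \chi(\HH_{G}; t)/t = P(G; t)/t$, where $P(G; t)$ denotes the chromatic polynomial of $G$. Therefore $\chi(\HH'; 0)$ equals the linear coefficient of $P(G; t)$, as required.

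The main obstacle is the recession-cone argument of the second paragraph, since $\HH_{G}$ on $T^{n}$ is manifestly non-essential (every hyperplane contains $\Delta$), and one must verify that after essentialization the lifted arrangement in $\Rrr^{n-1}$ admits no unbounded region; the essential input there is the interplay between integer periodicity of the hyperplanes and connectedness of $G$.
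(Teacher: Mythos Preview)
Your argument is correct and shares the paper's core strategy: eliminate the diagonal $T^{1}$-symmetry so that Theorem~\ref{theorem_toric_Zaslavsky_version_1} applies, then track what happens to the characteristic polynomial. The implementations differ, however. The paper stays on $T^{n}$ and adjoins the hyperplane $x_{1}=0$ to $\HH_{G}$; this slices each region transversally to its circle factor, producing an arrangement $\HH'$ on $T^{n}$ with the same number of regions (now all balls) and with intersection poset equal to the product of a two-element chain with that of $\HH_{G}$, whence $\chi(\HH';t)=(t-1)\,\chi(\HH_{G};t)/t$. You instead pass to the quotient $T^{n}/\Delta\cong T^{n-1}$, obtaining an essential arrangement whose intersection poset is isomorphic to that of $\HH_{G}$ with every dimension shifted down by one, so that $\chi(\HH';t)=\chi(\HH_{G};t)/t$ directly. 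Your route gives a cleaner justification of the homotopy type (principal $T^{1}$-bundle over a contractible base) and an explicit recession-cone argument for why the essentialized regions are balls, both of which the paper leaves implicit; the paper's route avoids the quotient construction and bundle language in favor of a one-line poset product computation.
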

\begin{proof}
Recall the chromatic polynomial of the graph $G$ is equal to
the characteristic polynomial of the graphical arrangement
$\HH_{G}$.
The intersection lattice
of the real arrangement $\HH_{G}$ is the same
as the intersection poset of the toric arrangement $\HH_{G}$.
Translating the graphical arrangement in the
direction $(1, \ldots, 1)$ leaves the
arrangement on the torus invariant.
Since $G$ is connected this is the only direction
that leaves the arrangement invariant.
Hence each region is
homotopy equivalent to $T^{1}$.
By adding the hyperplane $x_{1} = 0$ to the arrangement
we obtain a new arrangement $\HH^{\prime}$
with the same number of regions, 
but with each region homeomorphic to a ball.
Since the intersection lattice of $\HH^{\prime}$
is just the Cartesian product of the two-element poset
with the  intersection lattice of $\HH_{G}$, we have
$$   \chi(\HH^{\prime}, t)
   = 
     (t-1) \cdot \chi(\HH_{G}, t)/t .  $$
The number of regions is obtained by setting $t=0$ in this equality.
\end{proof}

A similar statement holds for graphs that are disconnected.
The result follows from the fact that
the complement of the graphical arrangement
is the product of the complements
of each connected component.
\begin{corollary}
For a graph $G$ on $n$ vertices 
consisting of $k$ components,
the regions
in the complement
of the graphical arrangement $\HH_{G}$ on the torus $T^{n}$
are each homotopy equivalent to the $k$-dimensional torus $T^{k}$.
The number of regions is given by
$(-1)^{n-k}$ times the coefficient of $t^{k}$ in the chromatic
polynomial of $G$.
\end{corollary}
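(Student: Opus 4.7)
The plan is to reduce the disconnected case to the connected case by factoring both the complement and the chromatic polynomial according to the connected components of $G$. Write $G$ as the disjoint union of its connected components $G_{1}, \ldots, G_{k}$, where $G_{i}$ has vertex set $V_{i}$ of size $n_{i}$, so $n_{1} + \cdots + n_{k} = n$.

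First I would observe that splitting the coordinates according to the components gives a product decomposition $T^{n} = T^{n_{1}} \times \cdots \times T^{n_{k}}$, and since every hyperplane in $\HH_{G}$ has the form $x_{i} = x_{j}$ with $i,j$ lying in the same component of $G$, each such hyperplane is the preimage of a hyperplane of $\HH_{G_{\ell}}$ under the projection $T^{n} \to T^{n_{\ell}}$ (for the unique $\ell$ with $i,j \in V_{\ell}$). Consequently the complement factors as
\[
   T^{n} - \bigcup_{H \in \HH_{G}} H
   \:\: = \:\:
   \prod_{i=1}^{k} \left( T^{n_{i}} - \bigcup_{H \in \HH_{G_{i}}} H \right),
\]
and the regions of $\HH_{G}$ are exactly the products of regions of the $\HH_{G_{i}}$.

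Next, applying the previous corollary to each connected component, every region of $\HH_{G_{i}}$ on $T^{n_{i}}$ is homotopy equivalent to $T^{1}$. Since homotopy equivalence is preserved by finite products, every region of $\HH_{G}$ is homotopy equivalent to $T^{1} \times \cdots \times T^{1} = T^{k}$, which handles the topological half of the statement.

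For the enumeration, I would use the standard multiplicativity of the chromatic polynomial, $\chi_{G}(t) = \prod_{i=1}^{k} \chi_{G_{i}}(t)$. Since each $G_{i}$ is connected, $\chi_{G_{i}}(t)$ has $t$ as a factor but not $t^{2}$, so writing $\chi_{G_{i}}(t) = t \cdot q_{i}(t)$ with $q_{i}(0) \neq 0$ gives $\chi_{G}(t) = t^{k} \prod_{i} q_{i}(t)$, and therefore the coefficient of $t^{k}$ in $\chi_{G}$ is $\prod_{i} q_{i}(0) = \prod_{i} [t^{1}] \chi_{G_{i}}(t)$. By the connected corollary the number of regions of $\HH_{G_{i}}$ is $(-1)^{n_{i}-1} \cdot [t^{1}] \chi_{G_{i}}(t)$, so multiplying across the $k$ factors and using $\sum_{i}(n_{i}-1) = n-k$ yields that the total number of regions is $(-1)^{n-k}$ times the coefficient of $t^{k}$ in $\chi_{G}(t)$. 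The only subtle point, and the one requiring slightly more care than the rest, is verifying the claim that the hyperplane arrangement genuinely splits as a Cartesian product across the factor tori; once that geometric observation is in hand, the remaining steps are just bookkeeping with polynomials and signs.
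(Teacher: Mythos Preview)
Your argument is correct and is exactly the approach the paper indicates: the paper's entire proof is the one-line remark that ``the complement of the graphical arrangement is the product of the complements of each connected component,'' and you have simply unpacked this product decomposition and carried through the bookkeeping with the chromatic polynomials and signs. There is nothing to add.
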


Stanley~\cite{Stanley_acyclic} proved the celebrated result
that the chromatic polynomial of a graph
evaluated at $t=-1$
is $(-1)^{n}$ times the number of acyclic orientations of the
graph.
A similar interpretation for the linear coefficient
of the chromatic polynomial is due to
Greene and Zaslavsky~\cite{Greene_Zaslavsky}:
\begin{theorem}[Greene--Zaslavsky]
Let $G$ be a connected graph and $v$ a given vertex of the graph.
The linear coefficient of the chromatic polynomial
is $(-1)^{n-1}$ times the number of acyclic orientations
of the graph such that the only sink is the vertex $v$.
\label{theorem_Greene_Zaslavsky}
\end{theorem}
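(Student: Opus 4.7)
The plan is to deduce the theorem from the preceding corollary. For a connected graph $G$ on $n$ vertices, that corollary identifies $(-1)^{n-1}$ times the linear coefficient of the chromatic polynomial of $G$ with the number of regions of the toric graphical arrangement $\HH_{G}$ on the torus $T^{n}$. Hence the theorem will follow from a bijection between the regions of $\HH_{G}$ on $T^{n}$ and the acyclic orientations of $G$ with $v$ as their unique sink.

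To construct the bijection I would lift to the universal cover $\pi\colon \Rrr^{n} \to T^{n}$. The pullback of $\HH_{G}$ is the periodic arrangement $\widetilde{\HH}_{G}$ with hyperplanes $x_{i} - x_{j} = k$ for each edge $ij \in E(G)$ and each integer $k$. Each toric region corresponds to a $\Zzz^{n}$-orbit of real regions of $\widetilde{\HH}_{G}$. On any real region $\widetilde R$, the signs $\operatorname{sgn}(x_{i} - x_{j})$ are constant for each edge, so $\widetilde R$ determines an acyclic orientation $\sigma(\widetilde R)$ of $G$ by declaring $i \to j$ whenever $x_{i} < x_{j}$.

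The claim is then that each $\Zzz^{n}$-orbit contains exactly one real region $\widetilde R$ whose induced orientation $\sigma(\widetilde R)$ has $v$ as its unique sink. For existence, given an acyclic orientation $\sigma$ with unique sink $v$, one exhibits an explicit lift: set $x_{v} = 0$ and $x_{i} = -d_{\sigma}(i, v) - \epsilon_{i}$ for each $i \neq v$, where $d_{\sigma}(i, v)$ is the length of a longest directed path from $i$ to $v$ in $\sigma$ and the $\epsilon_{i}$ are small generic positive reals; this places a point in a region of $\widetilde{\HH}_{G}$ whose induced orientation is $\sigma$, since the defining inequality $d_{\sigma}(i, v) \geq d_{\sigma}(j, v) + 1$ for each directed edge $i \to j$ forces $x_{i} < x_{j}$. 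For uniqueness within the orbit, one checks that any nontrivial $\Zzz^{n}$-translation (modulo the diagonal $(1, \ldots, 1)$, which acts trivially) changes at least one integer floor $\lfloor x_{i} - x_{j} \rfloor$, which in turn alters the sink set in a controlled way, so that at most one representative of each orbit can have $v$ as unique sink.

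The main obstacle is the uniqueness assertion, which requires a careful analysis of how the induced orientation transforms under a general $\Zzz^{n}$-translation and a verification that no two distinct translates of $\widetilde R$ can simultaneously yield acyclic orientations with unique sink $v$. The natural monovariant is the sum $\sum_{i} \lfloor x_{v} - x_{i}\rfloor$ taken over a canonical system of orbit representatives; one shows this value is strictly minimized at the lift described above. Once this bijection is in place, the count of toric regions matches the count of acyclic orientations of $G$ with unique sink $v$, proving the Greene--Zaslavsky theorem.
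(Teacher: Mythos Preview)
The central claim—that each $\Zzz^{n}$-orbit of periodic regions contains exactly one region whose induced orientation has $v$ as unique sink—is false. The orientation $\sigma(\widetilde R)$ depends only on the \emph{signs} of the differences $x_i - x_j$, not on the integer parts $\lfloor x_i - x_j\rfloor$; a nontrivial integer translation that preserves all these signs leaves the orientation unchanged. Concretely, take $G$ to be the path $1\text{--}2\text{--}3$ with $v=1$: there is a single $\Zzz^{3}$-orbit of periodic regions, yet every region with $\lfloor x_1-x_2\rfloor\geq 0$ and $\lfloor x_2-x_3\rfloor\geq 0$ has unique sink $1$ under your convention, so infinitely many representatives of that one orbit have the desired property. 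Your uniqueness sketch breaks down precisely here: changing an integer floor need not change any sign, hence need not alter the sink set at all, and no monovariant of the form $\sum_i \lfloor x_v - x_i\rfloor$ can repair this since it is not constant on the (many) regions realizing the same orientation.

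The logical structure is also tangled. Your ``existence'' paragraph proves that every unique-sink-$v$ orientation arises from \emph{some} periodic region, which is not the same as showing that every orbit contains such a region; and to obtain a bijection you would further need that distinct orbits yield distinct orientations, which is never addressed. The paper's argument avoids all of this by a different mechanism: it intersects each toric region with the hyperplane $x_v=0$ (so the pieces become bounded), lifts the resulting cell to a polytope in the periodic arrangement, and minimizes the linear functional $x_1+\cdots+x_n$ on its closure to select a distinguished lattice vertex. The orientation is read off from a point near that vertex, and the optimality of the vertex is exactly what forces $v$ to be the unique sink—if some other vertex $i$ were a sink, one could decrease $x_i$ past an integer without leaving the lifted cell, contradicting minimality.
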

\begin{proof}
It is enough to give a bijection between
regions in the complement of the graphical arrangement
on the torus $T^{n}$
and acyclic orientations with the vertex $v$ as the unique sink.
For a region $R$ of the arrangement intersect it with
the hyperplane $x_{v} = 0$ to obtain the face $S$.
Let $\HH^{\prime}$ be the arrangement~$\HH_{G}$
together with the hyperplane $x_{v} = 0$.
Lift $S$ to a face $\widetilde{S}$
in the periodic arrangement~$\widetilde{\HH^{\prime}}$
in~$\Rrr^{n}$.
Observe that $\widetilde{S}$ is the interior of a polytope.
When minimizing the linear functional
$L(x) = x_{1} + \cdots + x_{n}$
on the closure of the face $\widetilde{S}$, the optimum
is a lattice point $k = (k_{1}, \ldots, k_{n})$.
Pick a point $x = (x_{1}, \ldots, x_{n})$ 
in $\widetilde{S}$ close to the optimum,
that is, each coordinate $x_{i}$ lies in the interval
$[k_{i},k_{i}+\epsilon)$ for some small $\epsilon > 0$.

Let $y = (y_{1}, \ldots, y_{n})$ be the image of the point
$x$ on the torus $T^{n}$, that is,
$y_{i} = x_{i} \bmod 1$. Note that
each entry $y_{i}$ lies in the half open interval $[0,1)$
and that $y_{v} = 0$.
Construct an orientation of the graph $G$
by letting the edge $ij$ be oriented $i \rightarrow j$ if $y_{i} > y_{j}$.
Note that this orientation is acyclic and has the vertex $v$
as a sink. 

To show that the vertex $v$ is the unique sink,
assume that the vertex $i$ is also a sink, where $i \neq v$.
In other words, for all neighbors $j$ of the vertex $i$ we have that
$y_{i} < y_{j}$. We can continuously move the point
$x$ in $\widetilde{S}$
by decreasing the value of the $i$th coordinate $x_{i}$.
Observe that there is no hyperplane in the periodic
arrangement blocking the coordinate $x_{i}$ from passing through
the integer value $k_{i}$ and continuing down to $k_{i}-1+\epsilon$.
This contradicts the fact
that we chose the original point~$x$ close to the optimum
of the linear functional $L$. Hence the vertex $i$ cannot be a sink.

It is straightforward to verify that
this map from regions to the set of acyclic orientations
with the unique sink at $v$ is a bijection.
\end{proof}

The technique of assigning a point to every region
of a toric arrangement using a linear functional
was used by
Novik, Postnikov and Sturmfels
in their paper~\cite{Novik_Postnikov_Sturmfels}.
See their first proof of the number of regions
of a toric arrangement.

\subsection{The toric Bayer--Sturmfels result}

Define the {\em toric Zaslavsky invariant} of a graded poset $P$
having $\hz$ and $\ho$ by
$$    \Zt(P) 
   = 
      \sum_{x \mbox{ {\rm \tiny coatom of} } P}
      (-1)^{\rho(\hz,x)} \cdot \mu(\hz,x)    
   = 
      (-1)^{\rho(P) - 1} \cdot
      \sum_{x \mbox{ {\rm \tiny coatom of} } P}
      \mu(\hz,x)    .  $$

We reformulate 
Theorem~\ref{theorem_toric_Zaslavsky_version_1}
as follows.
\begin{theorem}
For a toric hyperplane arrangement $\HH$ 
on the torus $T^n$
that subdivides the torus into 
open $n$-dimensional balls,
the number of
regions is given by $\Zt(\PP)$, where $\PP$
is the intersection poset of the arrangement~$\HH$.
\label{theorem_toric_Zaslavsky_version_2}
\end{theorem}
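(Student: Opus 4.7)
The plan is to deduce this reformulation directly from Theorem~\ref{theorem_toric_Zaslavsky_version_1} by unpacking what happens when the toric characteristic polynomial is evaluated at $t=0$ and reinterpreting the resulting sum in poset-theoretic language.

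First I would evaluate the defining sum
$$\chi(\HH; t) = \sum_{\onethingatopanother{x \in \PP}{x \neq \emptyset}} \mu(\hz,x) \cdot t^{\dim(x)}$$
at $t = 0$. Only the terms with $\dim(x) = 0$ survive, so
$$\chi(\HH; t=0) = \sum_{\onethingatopanother{x \in \PP}{\dim(x)=0}} \mu(\hz,x).$$
The point of the argument is that the $0$-dimensional elements of $\PP$ are exactly the coatoms of $\PP$. Indeed, the maximal element $\ho$ of $\PP$ is $\emptyset$, and directly below it in the reverse-inclusion order sit the minimal nonempty toric subspaces, which are points. Conversely, any $0$-dimensional element $x \in \PP$ is covered only by $\ho$, since there is no toric subspace properly between a point and the empty set.

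Next I would reconcile the sign. The intersection poset $\PP$ has minimal element the whole torus $T^{n}$ of dimension $n$ and maximal element the empty set, and for any $x \in \PP$ with $x \neq \ho$ one has $\rho(\hz,x) = n - \dim(x)$. In particular the coatoms have rank $n$, and the poset itself has rank $\rho(\PP) = n+1$. Therefore
$$(-1)^{n} = (-1)^{\rho(\PP)-1},$$
and combining this with the coatom identification above gives
$$(-1)^{n} \cdot \chi(\HH;t=0) = (-1)^{\rho(\PP)-1} \sum_{x \,\text{coatom of}\, \PP} \mu(\hz,x) = \Zt(\PP).$$
By Theorem~\ref{theorem_toric_Zaslavsky_version_1} the left-hand side counts the regions, and the theorem follows.

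There is really no obstacle here beyond bookkeeping; the only subtlety is making sure that the set of $0$-dimensional elements of $\PP$ coincides with the set of coatoms, and that $\rho(\PP)$ is exactly $n+1$ so that the two sign conventions line up. Once those two observations are recorded, the statement is immediate from Theorem~\ref{theorem_toric_Zaslavsky_version_1}.
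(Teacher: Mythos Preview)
Your proposal is correct and follows exactly the route the paper intends: the paper presents Theorem~\ref{theorem_toric_Zaslavsky_version_2} simply as a reformulation of Theorem~\ref{theorem_toric_Zaslavsky_version_1} with no separate proof, and you have spelled out precisely the bookkeeping that makes that reformulation work. The only point worth flagging is the assertion that the minimal nonempty elements of $\PP$ (the coatoms) ``are points''; this uses that $\PP$ is graded of rank $n+1$, which the paper assumes implicitly when it applies the definition of $Z_t$ to $\PP$, so you are on the same footing as the paper there.
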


As a corollary of Theorem~\ref{theorem_toric_Zaslavsky_version_2},
we can describe the $f$-vector of the subdivision $T_{t}$ of the torus.
For similar results for more general manifolds
see~\cite[Section~3]{Zaslavsky_paper}.
\begin{corollary}
The number of $i$-dimensional regions in the subdivision $T_{t}$
of the $n$-dimensional torus is given by the sum
$$    f_{i+1}(T_{t})
    =
      (-1)^{i}
    \cdot
      \sum_{\onethingatopanother{x \leq y}
            {\onethingatopanother{\dim(x) = i}{\dim(y) = 0}}}
              \mu(x,y)  ,
$$
where $\mu(x,y)$ denotes the M\"obius function
of the interval $[x,y]$
in the intersection poset $\PP$.
\label{corollary_f_vector_I}
\end{corollary}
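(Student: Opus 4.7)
The plan is to partition the count of $i$-dimensional faces of $T_{t}$ according to the element of $\PP$ that is their affine hull. For each $x \in \PP$ with $\dim(x) = i$, the $i$-dimensional faces of $T_{t}$ lying in $x$ are precisely the connected components of $x$ with all toric hyperplanes $H_{j} \in \HH$ not containing $x$ removed. Equivalently, they are the regions of the induced toric hyperplane arrangement
$$\HH|_{x} = \{\,H_{j} \cap x \:\: : \:\: H_{j} \not\supseteq x\,\}$$
on the $i$-dimensional torus $x$. Moreover, the intersection poset of $\HH|_{x}$ is canonically isomorphic to the upper interval $[x,\ho]$ in $\PP$.

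Next I would apply Theorem~\ref{theorem_toric_Zaslavsky_version_2} to $\HH|_{x}$, which yields $\Zt([x,\ho])$ regions. Since $\rho(x) = n - i$ in $\PP$, the interval $[x,\ho]$ has rank $i+1$, and its coatoms are exactly the $0$-dimensional elements $y \in \PP$ with $x \leq y$. Unfolding the definition of the toric Zaslavsky invariant gives
$$\Zt([x,\ho]) = (-1)^{(i+1)-1} \sum_{\substack{x \leq y \\ \dim(y) = 0}} \mu(x,y) = (-1)^{i} \sum_{\substack{x \leq y \\ \dim(y) = 0}} \mu(x,y).$$
Summing over all $x \in \PP$ with $\dim(x) = i$ and pulling out the sign gives the stated formula for $f_{i+1}(T_{t})$.

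The main obstacle is to justify that the induced arrangement $\HH|_{x}$ again satisfies the open-ball hypothesis of Theorem~\ref{theorem_toric_Zaslavsky_version_2}, so that the theorem can be invoked for each $x$. This should follow from the hypothesis that $T_{t}$ is a genuine subdivision of $T^{n}$ into open balls: the cells of $T_{t}$ contained in $x$ form a cell decomposition of the $i$-torus $x$ whose open faces remain open balls, and this decomposition coincides with the subdivision induced by $\HH|_{x}$. Up to this point about the inherited ball condition, the remainder of the argument is essentially bookkeeping with rank functions and Möbius coatom sums.
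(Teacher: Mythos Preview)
Your proposal is correct and follows essentially the same approach as the paper: partition the $i$-dimensional faces by their affine hull $x$ in $\PP$, apply the toric Zaslavsky count to the restricted arrangement on $x$ (the paper cites Theorem~\ref{theorem_toric_Zaslavsky_version_1}, you cite the equivalent reformulation Theorem~\ref{theorem_toric_Zaslavsky_version_2}), and sum over $x$. The paper's proof is terser and does not explicitly flag the inherited open-ball hypothesis that you raise, but the argument is the same.
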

\begin{proof}
Each $i$-dimensional region is contained in a unique 
$i$-dimensional subspace $x$. By restricting the arrangement
to the subspace $x$ and applying
Theorem~\ref{theorem_toric_Zaslavsky_version_1}, we have that
the number of $i$-dimensional regions in $x$ is given by
$(-1)^{i} \cdot
 \sum_{x \leq y, \dim(y) = 0}   \mu(x,y)$.
Summing over all $x$, the result follows.
\end{proof}

For the remainder of this section we will assume that
the induced subdivision of the torus is a regular cell complex.
Let $T_{t}$ be the face poset of 
the subdivision of the torus induced by the
toric arrangement.
Define the map
$\zt : T_{t}^{*} \longrightarrow \Pzero$
by sending each face to the
smallest toric subspace in the intersection poset that contains
the face
and
sending
the minimal element
in $T_{t}^{*}$ to $\hz$.
Observe
that the map $\zt$ is order- and rank-preserving,
as well as being surjective.
As in the central hyperplane arrangement case,
we view the map $\zt$ as a map from the set of chains
of $T_{t}^{*}$ to the set of chains of $\Pzero$.

Let $x$ be an element in the intersection poset $\PP$
of a toric hyperplane arrangement $\HH$.
Then the interval $[x,\ho]$ is the intersection poset
of a toric arrangement in the toric subspace $x$.
The atoms of the interval $[x,\ho]$ are the toric hyperplanes
in this smaller toric arrangement.

More interesting is the geometric interpretation
of the interval $[\hz,x]$. It is the intersection lattice
of a central hyperplane arrangement
in $\Rrr^{n - \dim(x)}$.
Without loss of generality we may assume that
$x$ contains the zero point $(0, \ldots, 0)$,
that is, when we lift the toric subspace $x$ to an affine subspace $V$
in~$\Rrr^{n}$
we may assume that $V$ is a subspace of $\Rrr^{n}$.
Any toric subspace $y$ in the interval $[\hz,x]$,
that is, a toric subspace containing $x$,
can be lifted to a subspace $W$ containing the subspace $V$.
In particular, the toric hyperplanes in $[\hz,x]$
lift to hyperplanes in $\Rrr^{n}$ containing $V$.
This lifting is a poset isomorphism and
we obtain an essential central arrangement
of dimension $n - \dim(x)$
by quotienting out by the subspace~$V$.
We conclude by noticing that an interval $[x,y]$ in
$\PP$, where $y < \ho$, is the intersection lattice
of a central hyperplane arrangement.

The toric analogue of Theorem~\ref{theorem_Bayer_Sturmfels}
is as follows.
\begin{theorem}
Let $\PP$ be the intersection poset of a toric
hyperplane arrangement
whose induced subdivision is regular.
Let $c = \{\hz = x_{0} < x_{1} < \cdots < x_{k} = \ho\}$
be a chain in $\Pzero$
with $k \geq 2$.
Then the cardinality
of the inverse image of the chain $c$ is given by the product
$$      |\zt^{-1}(c)|
    =
        \prod_{i=2}^{k-1}
               Z([x_{i-1},x_{i}])  
    \cdot
               \Zt([x_{k-1},x_{k}])     .  $$
\label{theorem_Bayer_Sturmfels_toric}
\end{theorem}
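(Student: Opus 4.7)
My plan is to adapt the proof of the central Bayer--Sturmfels result (Theorem~\ref{theorem_Bayer_Sturmfels}) by isolating a ``toric layer'' at the top of the chain. A chain in $\zt^{-1}(c)$ has the form $\sigma_{0} < \sigma_{1} < \cdots < \sigma_{k}$ in $T_{t}^{*}$, where $\sigma_{0} = T^{n}$ and $\sigma_{k} = \emptyset$ are forced and, for $1 \leq i \leq k-1$, the cell $\sigma_{i}$ must have smallest containing toric subspace equal to $x_{i}$ (in particular $\dim(\sigma_{i}) = \dim(x_{i})$). I will count such chains in two stages: first choose $\sigma_{k-1}$, then extend upward in $T_{t}^{*}$ to $\sigma_{0}$.

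First I count $\sigma_{k-1}$. The admissible cells are precisely the top-dimensional open cells of the toric arrangement induced on the subtorus $x_{k-1}$; these are open balls because the regularity of the global subdivision of $T^{n}$ descends to the induced subdivision of $x_{k-1}$. The intersection poset of this induced arrangement is the upper interval $[x_{k-1},\ho]$ in $\Pzero$, so Theorem~\ref{theorem_toric_Zaslavsky_version_2} yields $\Zt([x_{k-1},x_{k}])$ choices.

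Next, for a fixed $\sigma_{k-1}$, I count chains $\sigma_{0} < \sigma_{1} < \cdots < \sigma_{k-1}$ in $T_{t}^{*}$ extending $\sigma_{k-1}$ and satisfying $\zt(\sigma_{i}) = x_{i}$. The cells of $T_{t}$ that contain $\sigma_{k-1}$ in their closure form a poset isomorphic to the face lattice of a central hyperplane arrangement $\HH^{(\sigma_{k-1})}$ in the normal space $\Rrr^{n-\dim(x_{k-1})}$ to $x_{k-1}$: concretely, $\HH^{(\sigma_{k-1})}$ is obtained by taking a small transverse slice through a point of $\sigma_{k-1}$ and recording the local equations of the toric hyperplanes that contain $x_{k-1}$. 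Since the toric subspaces containing $x_{k-1}$ are exactly the nonminimal elements of $[\hz,x_{k-1}]$ in $\Pzero$, the intersection lattice of $\HH^{(\sigma_{k-1})}$ with a new $\hz$ adjoined is precisely $[\hz,x_{k-1}]$, and this identification carries the affine hull map $z$ of $\HH^{(\sigma_{k-1})}$ to the restriction of $\zt$ to the star of $\sigma_{k-1}$. Applying Theorem~\ref{theorem_Bayer_Sturmfels} to $\HH^{(\sigma_{k-1})}$ and to the chain $\{\hz = x_{0} < x_{1} < \cdots < x_{k-1}\}$ in $[\hz,x_{k-1}]$ then produces exactly $\prod_{i=2}^{k-1} Z([x_{i-1},x_{i}])$ extensions.

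Because this extension count depends only on the interval $[\hz,x_{k-1}]$ and not on the particular choice of $\sigma_{k-1}$, multiplying the two stages yields the claimed formula $\prod_{i=2}^{k-1} Z([x_{i-1},x_{i}]) \cdot \Zt([x_{k-1},x_{k}])$. The main technical obstacle is the geometric dictionary between the star of $\sigma_{k-1}$ in $T_{t}$ and the face lattice of a central arrangement, together with the compatibility of $\zt$ and $z$ under this dictionary; this is where the regularity of the induced subdivision is crucially used. Once this dictionary is established, the theorem reduces cleanly to Theorems~\ref{theorem_toric_Zaslavsky_version_2} and~\ref{theorem_Bayer_Sturmfels}.
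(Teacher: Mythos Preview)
Your proposal is correct and follows essentially the same approach as the paper: first count the choices for the top cell $\sigma_{k-1}$ using the toric Zaslavsky theorem applied to the subtorus $x_{k-1}$, then count the remaining extensions using the fact that the local picture around $\sigma_{k-1}$ is that of a central arrangement with intersection lattice $[\hz,x_{k-1}]$. The only cosmetic difference is that the paper iterates the second stage one step at a time (choosing $\sigma_{k-2}$, then $\sigma_{k-3}$, etc., each via the central Zaslavsky count $Z([x_{i-1},x_{i}])$), whereas you package all of those steps into a single invocation of Theorem~\ref{theorem_Bayer_Sturmfels}; the geometric content is identical.
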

\begin{proof}
We need to count the number of ways we can select a chain
$d = \{\hz = y_0 < y_1 < \dots < y_k = \ho\}$ in $T_{t}^{*}$
such that $\zt(y_{i}) = x_{i}$.
The number of ways to select
the element $y_{k-1}$ in $T_{t}^{*}$
is the number of regions
in the arrangement restricted to the toric subspace $x_{k-1}$.
By Theorem~\ref{theorem_toric_Zaslavsky_version_2}
this can be done in $\Zt([x_{k-1},x_{k}])$ number of ways.
Observe now that
all other elements in the chain $d$
contain the face~$y_{k-1}$.

To count the number of ways
to select the element $y_{k-2}$, we follow the original argument
of Bayer--Sturmfels. 
We would like to pick the face $y_{k-2}$
such that it contains the face $y_{k-1}$ and 
it is a region in the toric subspace $x_{k-2}$. 
This is equal to the number of regions in
the central arrangement having the intersection lattice $[x_{k-2},x_{k-1}]$,
which is given by $Z([x_{k-2},x_{k-1}])$.
By iterating this procedure
until we reach the element $y_{1}$, the result follows.
\end{proof}

\begin{corollary}
The flag $f$-vector entry
$f_{S}(T_{t})$
of the face poset
$T_{t}$ of a toric arrangement
whose induced subdivision is regular subdivision of $T^{n}$
is divisible by $2^{|S|-1}$
for $S \subseteq \{1, \ldots, n+1\}$
with
$S \neq \emptyset$.
\label{corollary_evenness}
\end{corollary}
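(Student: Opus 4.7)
The plan is to apply Theorem~\ref{theorem_Bayer_Sturmfels_toric} and exploit the antipodal symmetry of central hyperplane arrangements. First I would use poset duality to rewrite $f_S(T_t) = f_{S^*}(T_t^*)$, where $S^* = \{n+2-s : s \in S\} \subseteq \{1,\ldots,n+1\}$ (noting that $T_t$ has rank $n+2$), so in particular $|S^*| = |S|$. Since the rank-preserving map $\zt : T_t^{*} \longrightarrow \Pzero$ partitions the chains of $T_t^{*}$ into fibers described by Theorem~\ref{theorem_Bayer_Sturmfels_toric}, this gives
\[
  f_S(T_t) \;=\; \sum_{c} |\zt^{-1}(c)|,
\]
where the sum runs over chains $c = \{\hz = x_0 < x_1 < \cdots < x_k = \ho\}$ in $\Pzero$ whose middle elements have ranks exactly $S^*$, so $k = |S|+1$.

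Next, Theorem~\ref{theorem_Bayer_Sturmfels_toric} gives the factorization
\[
  |\zt^{-1}(c)| \;=\; \prod_{i=2}^{k-1} Z([x_{i-1}, x_i]) \cdot \Zt([x_{k-1}, x_k]).
\]
The crux of the argument is that each of the $k-2 = |S|-1$ factors $Z([x_{i-1}, x_i])$ is even. For $2 \leq i \leq k-1$ the element $x_i$ satisfies $x_i < \ho$, so by the discussion preceding Theorem~\ref{theorem_Bayer_Sturmfels_toric} the interval $[x_{i-1}, x_i]$ is the intersection lattice of an essential central hyperplane arrangement of positive rank $\rho(x_i) - \rho(x_{i-1}) \geq 1$. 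By Theorem~\ref{theorem_Zaslavsky_poset}(i), $Z([x_{i-1}, x_i])$ counts the regions of this arrangement. The antipodal map $x \mapsto -x$ is a fixed-point-free involution on the set of regions: a region fixed by this involution would, by convexity of the region (an open polyhedral cone), contain the origin, yet the origin lies on every hyperplane of a central arrangement. Hence the regions pair up and their number is even.

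Combining these observations, each summand $|\zt^{-1}(c)|$ is divisible by $2^{|S|-1}$, and therefore so is $f_S(T_t)$. In the degenerate case $|S|=1$ the empty product yields the trivial divisibility by $2^{0}=1$. The only real obstacle is verifying that the relevant intervals genuinely correspond to central arrangements with at least one hyperplane; this requires only that $x_i < \ho$ and $x_{i-1} < x_i$, both of which are built into the range $2 \leq i \leq k-1$. Note that the final factor $\Zt([x_{k-1}, x_k])$ need not be even (for instance, a single hyperplane on $T^{1}$ produces one region), so $2^{|S|-1}$ is the best bound obtainable from this approach.
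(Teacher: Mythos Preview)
Your proof is correct and follows essentially the same approach as the paper's: express $f_{S}(T_{t})$ as a sum of fiber sizes $|\zt^{-1}(c)|$ via Theorem~\ref{theorem_Bayer_Sturmfels_toric}, and observe that each of the $|S|-1$ factors $Z([x_{i-1},x_{i}])$ is even. The paper states the evenness of $Z$ without justification; your antipodal-involution argument supplies exactly the missing detail, and your careful bookkeeping with the dual poset and the ranges $2 \leq i \leq k-1$ makes explicit what the paper leaves implicit.
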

\begin{proof}
The proof follows from the fact
that the Zaslavsky invariant $Z$ is an even integer
and that a given flag $f$-vector entry is the appropriate
sum of products appearing in
Theorem~\ref{theorem_Bayer_Sturmfels_toric}.
\end{proof}

\subsection{The connection between posets and coalgebras}

For an $\ab$-monomial $v$ 
define the linear map $\lambda_{t}$ 
by letting
$$
    \lambda_{t}(v)
  =
     \begin{cases}
        (\av-\bv)^{m}   & \text{if $v=\bv^{m}$    for some $m \geq 0$,} \\
        (\av-\bv)^{m+1} & \text{if $v=\bv^{m}\av$ for some $m \geq 0$,} \\
        0               & \text{otherwise.}
\end{cases}
$$

Define the linear operator $H^{\prime}$ on $\zab$
to be the one which removes 
the last letter in each $\ab$-monomial, that is,
$H^{\prime}(w \cdot \av) = H^{\prime}(w \cdot \bv) = w$
and $H^{\prime}(1) = 0$.
We use the prime in the notation to distinguish it from the~$H$ map
defined in~\cite[Section~8]{Billera_Ehrenborg_Readdy_om}
which instead removes the first letter in each $\ab$-monomial.
From~\cite{Billera_Ehrenborg_Readdy_om} we have the
following lemma.
\begin{lemma}
For a graded poset $P$ with $\ho$ of rank greater than or equal to $2$,
the following identity holds:
$$
H^{\prime}(\Psi(P))
      =
\sum_{x \mbox{ {\rm \tiny coatom of} } P} \Psi([\hz,x]) .
$$
\end{lemma}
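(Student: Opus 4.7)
The plan is to prove the identity by working directly from the chain-weight definition of the $\ab$-index, that is, $\Psi(P) = \sum_{c} \wt(c)$ where the sum ranges over all chains $c = \{\hz = x_{0} < x_{1} < \cdots < x_{k} = \ho\}$ in $P$ and $\wt(c)$ is the product in equation~(\ref{equation_weight}). Since $H^{\prime}$ is linear, it suffices to understand its effect on each $\wt(c)$.

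The first step is to establish the sublemma that $H^{\prime}((\av-\bv)^{m}) = 0$ for every $m \geq 0$. When $m = 0$ this is the defining condition $H^{\prime}(1) = 0$; when $m \geq 1$, expand $(\av-\bv)^{m}$ as a signed sum of length-$m$ words: the words ending in $\av$ and those ending in $\bv$ pair off by matching common prefixes, and their signs are opposite, so the images under $H^{\prime}$ cancel. Since $H^{\prime}$ acts only on the last letter, we have the product rule $H^{\prime}(u \cdot v) = u \cdot H^{\prime}(v)$ whenever $v$ is a linear combination of $\ab$-monomials each of positive length.

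Now consider a chain $c$ in $P$ and its weight, whose rightmost factor is $(\av-\bv)^{\rho(x_{k-1},x_{k})-1}$. If $\rho(x_{k-1},x_{k}) \geq 2$, the product rule together with the sublemma forces $H^{\prime}(\wt(c)) = 0$. The only surviving contributions therefore come from chains whose top edge has rank difference one, i.e., chains where $x_{k-1}$ is a coatom of $P$. For such a chain the final factor is $(\av-\bv)^{0} = 1$, so $\wt(c)$ ends in the explicit $\bv$ preceding it; stripping that $\bv$ yields exactly the weight (computed inside $[\hz,x_{k-1}]$, viewing $x_{k-1}$ as the top element there) of the truncated chain $c' = \{\hz = x_{0} < x_{1} < \cdots < x_{k-1}\}$. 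Note that the hypothesis $\rho(P) \geq 2$ ensures the degenerate chain $\{\hz < \ho\}$ of length one falls into the vanishing case and need not be treated separately.

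Finally, regrouping the surviving chains by their coatom $x = x_{k-1}$, every chain of $[\hz,x]$ arises exactly once as a truncation $c'$, and the preceding discussion gives
\[
H^{\prime}(\Psi(P))
 = \sum_{x \mbox{ {\rm \tiny coatom of} } P} \:\: \sum_{c' \text{ chain in } [\hz,x]} \wt(c')
 = \sum_{x \mbox{ {\rm \tiny coatom of} } P} \Psi([\hz,x]),
\]
which is the desired identity. The calculation is essentially routine; the only genuine content is the cancellation $H^{\prime}((\av-\bv)^{m}) = 0$ combined with the observation that $H^{\prime}$ commutes past everything to the left of the rightmost factor, so there is no real obstacle beyond bookkeeping.
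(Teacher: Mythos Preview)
Your proof is correct. The paper itself does not prove this lemma; it simply cites~\cite{Billera_Ehrenborg_Readdy_om} (where the analogous map $H$ stripping the \emph{first} letter is treated in Section~8). Your argument via the chain-weight expansion $\Psi(P) = \sum_{c} \wt(c)$ is the natural direct approach: the cancellation $H^{\prime}((\av-\bv)^{m}) = 0$ kills every chain whose top step has rank difference at least $2$, and for the surviving chains (those passing through a coatom) stripping the trailing $\bv$ recovers exactly the weight of the truncated chain in $[\hz,x]$. The handling of the degenerate chain $\{\hz < \ho\}$ via the hypothesis $\rho(P) \geq 2$ is correct, and the bijection between chains of $P$ through a fixed coatom $x$ and chains of $[\hz,x]$ is immediate. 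Nothing is missing.
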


The next lemma gives the relation between
the toric Zaslavsky invariant $\Zt$ and
the map~$\lambda_{t}$.
\begin{lemma}
For a graded poset $P$ with $\ho$ of rank greater than or equal to $1$,
the following identity holds:
$$
     \lambda_{t}(\Psi(P)) = \Zt(P) \cdot (\av-\bv)^{\rho(P)-1}.
$$
\end{lemma}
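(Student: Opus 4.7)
The approach is to expand $\Psi(P)$ in the flag $h$-vector basis, observe that $\lambda_t$ annihilates almost every monomial, and reduce the lemma to a M\"obius-function identity of the sort proved for $\kappa$, $\beta$ and $\eta$ in~\cite{Billera_Ehrenborg_Readdy_om}.

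Writing $\Psi(P) = \sum_{S \subseteq \{1,\ldots,n\}} h_S \cdot u_S$ with $n = \rho(P) - 1$, the definition of $\lambda_t$ shows that exactly two of these monomials can survive: $u_{\{1,\ldots,n\}} = \bv^n$ and $u_{\{1,\ldots,n-1\}} = \bv^{n-1}\av$, each mapped to $(\av-\bv)^n$. Thus
\[
\lambda_t(\Psi(P)) = \bigl( h_{\{1,\ldots,n\}} + h_{\{1,\ldots,n-1\}} \bigr) \cdot (\av-\bv)^{\rho(P)-1},
\]
and the lemma reduces to the identity $h_{\{1,\ldots,n\}} + h_{\{1,\ldots,n-1\}} = \Zt(P)$. (The degenerate case $\rho(P)=1$, $P = B_1$, is immediate, since only $u_\emptyset = 1 = \bv^0$ appears and $h_\emptyset = 1 = \Zt(B_1)$.)

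For the identity, I would invoke Philip Hall's theorem (as recalled in the paper) to rewrite $h_{\{1,\ldots,n\}} = (-1)^{n-1}\mu(P)$ and $h_{\{1,\ldots,n-1\}} = (-1)^{n-2}\mu(P')$, where $P' = P(\{1,\ldots,n-1\})$ is obtained from $P$ by deleting the coatoms. Since every interval $[\hz,z]$ with $z < \ho$ in $P'$ coincides with its counterpart in $P$, one has $\mu_{P'}(\hz,z) = \mu_P(\hz,z)$ for each such $z$. Applying the defining M\"obius recursion first inside $P'$ at $\ho$ and then inside $P$ at $\ho$ gives
\[
\mu(P') \;=\; -\!\!\!\!\sum_{\onethingatopanother{\hz \leq z < \ho}{z \text{ not a coatom of } P}} \!\!\!\! \mu(\hz,z) \;=\; \mu(P) + \sum_{x \text{ coatom of } P} \mu(\hz, x).
\]
Substituting this into the sum $h_{\{1,\ldots,n\}} + h_{\{1,\ldots,n-1\}}$, the two $\mu(P)$ terms cancel because $(-1)^{n-1} + (-1)^{n-2} = 0$, leaving $(-1)^{n}\sum_{x \text{ coatom}} \mu(\hz,x) = \Zt(P)$, which is exactly the required identity.

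The only subtlety is the sign bookkeeping in the M\"obius computation and the check that coatom-deletion leaves all lower intervals undisturbed; both are straightforward. The whole argument mirrors the proofs of $\beta(\Psi(P)) = Z_b(P)\cdot(\av-\bv)^{\rho(P)-1}$ and $\eta(\Psi(P)) = Z(P)\cdot(\av-\bv)^{\rho(P)-1}$ in~\cite{Billera_Ehrenborg_Readdy_om}, the operator $\lambda_t$ being designed precisely so that the pair of surviving flag $h$-entries combines to $\Zt(P)$ rather than $Z_b(P)$ or $Z(P)$.
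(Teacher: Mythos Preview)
Your proof is correct, but it follows a different route from the paper's. The paper observes that for any nonconstant $\ab$-monomial $v$ one has the factorisation $\lambda_t(v) = \beta(H'(v)) \cdot (\av-\bv)$, where $H'$ strips off the last letter; it then applies the lemma $H'(\Psi(P)) = \sum_{x\ \text{coatom}} \Psi([\hz,x])$ together with the already-established identity $\beta(\Psi([\hz,x])) = Z_b([\hz,x]) \cdot (\av-\bv)^{\rho(\hz,x)-1}$, and the result drops out in two lines. Your argument is more elementary and self-contained---it goes directly through the flag $h$-expansion and a single M\"obius-recursion step, never touching $H'$ or $\beta$---while the paper's approach makes the decomposition of $\lambda_t$ in terms of the previously defined operators explicit. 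Both reach the same identity $(-1)^{n}\sum_{x\ \text{coatom}}\mu(\hz,x)$; yours computes it from $h_{[n]}+h_{[n-1]}$, theirs from summing $Z_b$ over coatom intervals.
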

\begin{proof}
When $P$ has rank $1$, both sides are equal to $1$.
For an $\ab$-monomial $v$ different from $1$, we have that
$\lambda_{t}(v) = \beta(H^{\prime}(v)) \cdot (\av-\bv)$.
Hence
\begin{eqnarray*}
\lambda_{t}(\Psi(P))
  & = &
\beta(H^{\prime}(\Psi(P))) \cdot (\av-\bv) \\
  & = &
\sum_{x \mbox{ {\rm \tiny coatom of} } P}
  \beta(\Psi([\hz,x])) \cdot (\av-\bv) \\
  & = &
(-1)^{\rho(P)} \cdot
\sum_{x \mbox{ {\rm \tiny coatom of} } P}
  \mu(\hz,x) \cdot (\av-\bv)^{\rho(P)-1} ,
\end{eqnarray*}
which concludes the proof.
\end{proof}

Define a sequence of functions
$\varphi_{t,k}\colon\zab\to\zab$ by $\varphi_{t,1}=\kappa$,
and for $k \geq 2$,
$$
\varphi_{t,k}(v)
  =
\sum_{v}
     \kappa(v_{(1)}) \cdot
     \bv \cdot \eta(v_{(2)}) \cdot
     \bv \cdot \eta(v_{(3)}) \cdot \bv \cdots
     \bv \cdot \eta(v_{(k-1)}) \cdot
     \bv \cdot \lambda_{t}(v_{(k)}).                      
$$
Finally, let $\varphi_{t}(v)$ be the
sum $\varphi_{t}(v) = \sum_{k\geq 1}\varphi_{t,k}(v)$.

\begin{theorem}
The $\ab$-index of the face poset $T_{t}$ 
of a toric arrangement is given by
$$
     \Psi(T_{t})^{*} = \varphi_{t}(\Psi(\Pzero)).
$$
\label{theorem_poset_varphi_toric}
\end{theorem}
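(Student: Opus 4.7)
The plan is to mimic closely the proof of Proposition~\ref{proposition_varphi} from the central case. The only substantive change is that the \emph{top} interval of each chain must now be handled by the new operator $\lambda_{t}$ (which encodes $\Zt$) rather than by $\eta$ (which encodes $Z$), reflecting the fact that the maximal interval in the chain corresponds to a full toric arrangement on a subtorus rather than to a central arrangement.

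First I would expand the $\ab$-index via the chain-weight description of equation~(\ref{equation_weight}):
$$\Psi(T_{t}^{*}) \;=\; \sum_{d} \wt(d),$$
where the sum ranges over all chains $d$ in $T_{t}^{*}$. Because the map $\zt \colon T_{t}^{*} \longrightarrow \Pzero$ is order- and rank-preserving, each chain $d$ of length $k$ in $T_{t}^{*}$ maps to a chain $c = \zt(d)$ of length $k$ in $\Pzero$ with the same rank sequence, and thus the weight $\wt(d)$ depends only on $c$. Grouping chains by their image, I obtain
$$\Psi(T_{t}^{*}) \;=\; \sum_{c} |\zt^{-1}(c)| \cdot \wt(c),$$
the sum being over all chains $c$ in $\Pzero$.

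Next, for a chain $c = \{\hz = x_{0} < x_{1} < \cdots < x_{k} = \ho\}$ of length $k \geq 2$, I would substitute the cardinality from Theorem~\ref{theorem_Bayer_Sturmfels_toric}, while the unique length-$1$ chain $\{\hz < \ho\}$ contributes $\kappa(\Psi(\Pzero)) = \varphi_{t,1}(\Psi(\Pzero))$ directly via equation~(\ref{equation_kappa}). Using equations (\ref{equation_kappa}) and~(\ref{equation_eta}) together with the lemma giving $\lambda_{t}(\Psi(P)) = \Zt(P) \cdot (\av-\bv)^{\rho(P)-1}$, I would absorb each factor $(\av-\bv)^{\rho(x_{i-1},x_{i})-1}$ of $\wt(c)$, along with its numerical coefficient from $|\zt^{-1}(c)|$, into the appropriate operator: $\kappa$ on the first interval $[\hz,x_{1}]$, $\eta$ on each intermediate interval $[x_{i-1},x_{i}]$ for $2 \leq i \leq k-1$, and $\lambda_{t}$ on the top interval $[x_{k-1},\ho]$.

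Finally I would apply the Ehrenborg--Readdy coproduct identity (Theorem~\ref{theorem_Ehrenborg_Readdy}) with the $k$-linear map
$$M(u_{1}, u_{2}, \ldots, u_{k}) = \kappa(u_{1}) \cdot \bv \cdot \eta(u_{2}) \cdot \bv \cdots \bv \cdot \eta(u_{k-1}) \cdot \bv \cdot \lambda_{t}(u_{k})$$
applied to $w = \Psi(\Pzero)$. This converts the sum over chains of length $k$ in $\Pzero$ into the $(k-1)$-fold coproduct, which by definition is exactly $\varphi_{t,k}(\Psi(\Pzero))$. Summing over all $k \geq 1$ yields $\Psi(T_{t}^{*}) = \varphi_{t}(\Psi(\Pzero))$, and by the involutive property $\Psi(P)^{*} = \Psi(P^{*})$ this is the claimed identity. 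The principal technical obstacle is purely bookkeeping: one must be careful that the asymmetry between the topmost interval and the intermediate intervals is respected throughout, and that the $k=1$ boundary case fits cleanly into the same framework as the $k \geq 2$ terms via the definition $\varphi_{t,1} = \kappa$.
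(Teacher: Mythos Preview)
Your proposal is correct and follows essentially the same argument as the paper's own proof: expand $\Psi(T_{t}^{*})$ as a sum over chains, push chains forward along $\zt$ and group by their image, invoke Theorem~\ref{theorem_Bayer_Sturmfels_toric} for the fiber sizes, rewrite each weighted fiber count in terms of $\kappa$, $\eta$, and $\lambda_{t}$ via equations~(\ref{equation_kappa}) and~(\ref{equation_eta}) and the lemma for $\lambda_{t}$, and then apply Theorem~\ref{theorem_Ehrenborg_Readdy} to recognize $\varphi_{t,k}$, treating $k=1$ via $\varphi_{t,1}=\kappa$. If anything, you are slightly more explicit than the paper in distinguishing $\Psi(T_{t}^{*})$ from $\Psi(T_{t})$ and in spelling out the $k$-linear map $M$ fed into Theorem~\ref{theorem_Ehrenborg_Readdy}.
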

\begin{proof}
The $\ab$-index of the poset $T_{t}$
is given by the sum 
$\Psi(T_{t}) = \sum_{c} |\zt^{-1}(c)| \cdot \wt(c)$.
Fix $k \geq 2$ and sum over all chains
$c = \{\hz = x_{0} < x_{1} < \cdots < x_{k} = \ho\}$
of length $k$.  We then have
\begin{eqnarray*}
  &   &
\sum_{c} |\zt^{-1}(c)| \cdot \wt(c) \\
  & = &
\sum_{c} \prod_{i=2}^{k-1} Z([x_{i-1}, x_{i}]) 
         \cdot \Zt([x_{k-1}, x_{k}])
  \cdot
      (\av-\bv)^{\rho(x_{0},x_{1})-1}
        \cdot
      \bv
        \cdots
      \bv
        \cdot
      (\av-\bv)^{\rho(x_{k-1},x_{k})-1} \\
  & = &
\sum_{c} 
   \kappa(\Psi([x_{0},x_{1}]))
       \cdot 
   \prod_{i=2}^{k-1} \left(\bv \cdot \eta(\Psi([x_{i-1}, x_{i}]))\right)
       \cdot 
   \bv
       \cdot 
   \lambda_{t}(\Psi([x_{k-1},x_{k}])) \\
  & = &
\sum_{w}
   \kappa(w_{(1)})
       \cdot 
   \prod_{i=2}^{k-1} \left(\bv \cdot \eta(w_{(i)})\right)
       \cdot 
   \bv
       \cdot 
   \lambda_{t}(w_{(k)}) \\
  & = &
\varphi_{t,k}(w)  ,  
\end{eqnarray*}
where we let $w$ denote the $\ab$-index of 
the augmented intersection poset $\Pzero$.
For $k=1$ we have that
$(\av-\bv)^{\rho(T_{t})-1} = \varphi_{t,1}(\Psi(\Pzero))$.
Summing over all $k \geq 1$, we obtain the result.
\end{proof}

\subsection{Evaluating the function $\varphi_{t}$}

\begin{proposition}
For an $\ab$-monomial $v$,
the following identity holds:
$$ 
        \varphi_{t}(v) 
    =
        \kappa(v)
      +
        \sum_{v} \varphi(v_{(1)}) \cdot \bv \cdot \lambda_{t}(v_{(2)}) .
$$
\label{proposition_varphi_t}
\end{proposition}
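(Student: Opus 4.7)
The plan is to expand the definition of $\varphi_{t}$ term by term and use coassociativity of the coproduct $\Delta$ on $\zab$ to peel off the last tensor factor in each $\varphi_{t,k}$, thereby exposing the structure of $\varphi$ acting on the initial piece.

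First I would separate out the $k=1$ term, which is precisely $\varphi_{t,1}(v) = \kappa(v)$, so that it is enough to show
\begin{equation*}
\sum_{k \geq 2} \varphi_{t,k}(v)
 = \sum_{v} \varphi(v_{(1)}) \cdot \bv \cdot \lambda_{t}(v_{(2)}).
\end{equation*}
For fixed $k \geq 2$, the definition of $\varphi_{t,k}$ uses the $k$-ary coproduct $\Delta^{k-1}$. By coassociativity,
\begin{equation*}
\Delta^{k-1} = (\Delta^{k-2} \tensor \id) \circ \Delta,
\end{equation*}
which in Sweedler notation reads
\begin{equation*}
\sum_{v} v_{(1)} \tensor v_{(2)} \tensordots v_{(k)}
 = \sum_{v} \sum_{v_{(1)}} v_{(1,1)} \tensor v_{(1,2)} \tensordots v_{(1,k-1)} \tensor v_{(2)}.
\end{equation*}

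Next I would substitute this rewriting into the definition of $\varphi_{t,k}$. The last $\lambda_{t}$-slot absorbs the factor $v_{(2)}$, while the remaining $k-1$ slots (one $\kappa$ followed by $k-2$ copies of $\eta$) exactly reproduce $\varphi_{k-1}$ applied to the initial piece $v_{(1)}$. This yields
\begin{equation*}
\varphi_{t,k}(v)
 = \sum_{v} \varphi_{k-1}(v_{(1)}) \cdot \bv \cdot \lambda_{t}(v_{(2)}).
\end{equation*}

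Finally, summing over $k \geq 2$ (and noting that the sum is finite since all but finitely many terms vanish when applied to a fixed monomial) and interchanging summation gives
\begin{equation*}
\sum_{k \geq 2} \varphi_{t,k}(v)
 = \sum_{v} \Bigl(\sum_{j \geq 1} \varphi_{j}(v_{(1)})\Bigr) \cdot \bv \cdot \lambda_{t}(v_{(2)})
 = \sum_{v} \varphi(v_{(1)}) \cdot \bv \cdot \lambda_{t}(v_{(2)}),
\end{equation*}
which combined with the $k=1$ term proves the identity. No step is particularly delicate; the only point requiring care is making sure that the coassociativity reindexing correctly matches $\varphi_{k-1}$ on the left part and $\lambda_{t}$ on the final part, and that the resulting bookkeeping over $k$ produces $\varphi$ rather than $\varphi_{t}$ in the first factor.
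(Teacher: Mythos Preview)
Your proposal is correct and follows essentially the same approach as the paper: separate off the $k=1$ term $\kappa(v)$, use coassociativity in the form $\Delta^{k-1} = (\Delta^{k-2}\otimes\id)\circ\Delta$ to peel off the last tensor factor in each $\varphi_{t,k}$ for $k\geq 2$, recognize the remaining $k-1$ slots as $\varphi_{k-1}$ applied to $v_{(1)}$, and sum over $k$ to assemble $\varphi$.
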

\begin{proof}
Using the coassociative identity
$\Delta^{k-1} = (\Delta^{k-2} \tensor \id) \circ \Delta$,
for $k \geq 2$ we have that
\begin{eqnarray*}
\varphi_{t,k}(v)
  & = &
\sum_{v}              \kappa(v_{(1)}) \cdot
                      \bv \cdot \eta(v_{(2)}) \cdot \bv \cdots
                      \bv \cdot \eta(v_{(k-1)}) \cdot
                      \bv \cdot \lambda_{t}(v_{(k)}) \\             
  & = &
\sum_{v}
\sum_{v_{(1)}}        \kappa(v_{(1,1)}) \cdot
                      \bv \cdot \eta(v_{(1,2)}) \cdot \bv \cdots
                      \bv \cdot \eta(v_{(1,k-1)}) \cdot
                      \bv \cdot \lambda_{t}(v_{(2)}) \\             
  & = &
\sum_{v}
                      \varphi_{k-1}(v_{(1)}) \cdot
                      \bv \cdot \lambda_{t}(v_{(2)}) .
\end{eqnarray*}
By  summing over all $k \geq 1$, the result follows.
\end{proof}

\begin{lemma}
Let $v$ be an $\ab$-monomial that begins with $\av$
and let $x$ be either $\av$ or $\bv$. Then
$$     \varphi_{t}(v \cdot \av \cdot x)
   = 
       \kappa(v \cdot \av \cdot x)
     + 
       {1}/{2} \cdot \omega(v \cdot \av\bv)  .  $$
\label{lemma_toric_varphi_I}
\end{lemma}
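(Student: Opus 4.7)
The plan is to apply the functional equation from Proposition~\ref{proposition_varphi_t} directly to $v \cdot \av \cdot x$, exploit the fact that $\lambda_{t}$ is supported on very few monomials to kill most terms of the coproduct sum, and then collapse what remains using the $\varphi$-recursions together with Proposition~\ref{proposition_culminate}.

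First, I would write out $\Delta(v \cdot \av \cdot x)$ by using the explicit formula for the coproduct of an $\ab$-monomial: the sum decomposes into three classes of splittings, namely those that cut inside $v$, the split at the internal $\av$ (giving $v \tensor x$), and the split at the final letter $x$ (giving $v \cdot \av \tensor 1$). Substituting into Proposition~\ref{proposition_varphi_t} yields
\begin{eqnarray*}
\varphi_{t}(v \cdot \av \cdot x)
  & = &
    \kappa(v \cdot \av \cdot x)
    + \sum_{v} \varphi(v_{(1)}) \cdot \bv \cdot \lambda_{t}(v_{(2)} \cdot \av \cdot x) \\
  &   &
    + \, \varphi(v) \cdot \bv \cdot \lambda_{t}(x)
    + \varphi(v \cdot \av) \cdot \bv \cdot \lambda_{t}(1).
\end{eqnarray*}

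The key combinatorial observation is that every term in the middle sum vanishes. Indeed, $\lambda_{t}$ is nonzero only on monomials of the form $\bv^{m}$ or $\bv^{m}\av$, but $v_{(2)} \cdot \av \cdot x$ contains an internal $\av$ that is not at the rightmost position (the letter $x$ sits to its right). Hence $v_{(2)} \cdot \av \cdot x$ can never be a $\bv$-power or a $\bv$-power times a single trailing $\av$, and $\lambda_{t}$ annihilates it. This is the step requiring the most care, but once enumerated by cases on $x \in \{\av, \bv\}$ it is immediate.

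Next I would evaluate the remaining two pieces. By the definition of $\lambda_{t}$ we have $\lambda_{t}(\av) = \lambda_{t}(\bv) = \av - \bv$ and $\lambda_{t}(1) = 1$. Applying the recursion~(\ref{equation_varphi_a}) gives $\varphi(v \cdot \av) = \varphi(v) \cdot \cv$, so
$$
\varphi(v) \cdot \bv \cdot (\av-\bv) + \varphi(v) \cdot \cv \cdot \bv
  = \varphi(v) \cdot (\bv\av + \av\bv)
  = \varphi(v) \cdot \dv.
$$
Finally, since $v$ begins with $\av$, so does $v \cdot \av\bv$, and Proposition~\ref{proposition_culminate} combined with recursion~(\ref{equation_varphi_a_b}) gives
$$
\omega(v \cdot \av\bv) = \varphi(v \cdot \av\bv) = 2 \cdot \varphi(v) \cdot \dv,
$$
so $\varphi(v) \cdot \dv = \tfrac{1}{2}\,\omega(v \cdot \av\bv)$, completing the identity.

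The hard part is really just the bookkeeping in Step~2: one must be careful about which summand of the Sweedler expansion corresponds to which split, and verify the vanishing of $\lambda_{t}$ on the middle terms without missing degenerate cases such as $v_{(2)} = 1$. Everything else is a direct application of identities already established for $\varphi$, $\omega$, and $\lambda_{t}$.
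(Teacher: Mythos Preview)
Your proof is correct and follows essentially the same route as the paper: apply Proposition~\ref{proposition_varphi_t}, split the coproduct of $v\cdot\av\cdot x$ into the cut at the last letter, the cut at the internal $\av$, and the cuts inside $v$, observe that $\lambda_{t}$ kills the latter, and then collapse $\varphi(v)\cdot\bv\cdot(\av-\bv)+\varphi(v)\cdot\cv\cdot\bv=\varphi(v)\cdot\dv=\tfrac12\,\omega(v\cdot\av\bv)$. Your expansion even corrects a harmless typo in the paper, which writes $\lambda_{t}(v_{(2)}\cdot\bv\cdot x)$ where $\lambda_{t}(v_{(2)}\cdot\av\cdot x)$ is meant; both vanish, so nothing changes.
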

\begin{proof}
Using Proposition~\ref{proposition_varphi_t}
we have
\begin{eqnarray*}
       \varphi_{t}(v \cdot \av \cdot x)
  & = &
       \kappa(v \cdot \av \cdot x)
     +
       \varphi(v \cdot \av) \cdot \bv \cdot \lambda_{t}(1)
     +
       \varphi(v) \cdot \bv \cdot \lambda_{t}(x)  \\
  &   &
     + \sum_{v}
       \varphi(v_{(1)}) \cdot \bv \cdot
                 \lambda_{t}(v_{(2)} \cdot \bv \cdot x)  \\
  & = &
       \kappa(v \cdot \av \cdot x)
     +
       \varphi(v) \cdot \cv \cdot \bv
     +
       \varphi(v) \cdot \bv \cdot (\av-\bv) \\
  & = &
       \kappa(v \cdot \av \cdot x)
     +
       \omega(v) \cdot \dv \\
  & = &
       \kappa(v \cdot \av \cdot x)
     +
       1/2 \cdot \omega(v \cdot \av\bv) ,
\end{eqnarray*}
since $\lambda_{t}(v_{(2)} \cdot \bv \cdot x) = 0$.
\end{proof}

\begin{lemma}
Let $v$ be an $\ab$-monomial that begins with $\av$,
let $k$ be a positive integer 
and let $x$ be either $\av$ or $\bv$.
Then the following evaluation holds:
$$     \varphi_{t}(v \cdot \av \bv^{k} \cdot x)
   = 
       \kappa(v \cdot \av \bv^{k} \cdot x)
     + 
       {1}/{2} \cdot \omega(v \cdot \av\bv^{k+1})  .  $$
\label{lemma_toric_varphi_II}
\end{lemma}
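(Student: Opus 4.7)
The plan is to expand the left-hand side using Proposition~\ref{proposition_varphi_t} and show it equals the right-hand side after routine simplification. First observe that $\kappa(v \cdot \av \bv^k \cdot x) = 0$ (since $k \geq 1$ forces a $\bv$ into the monomial), and likewise the right-hand side simplifies as follows: by repeated application of (\ref{equation_varphi_a_b}) and (\ref{equation_varphi_b_b}) combined with Proposition~\ref{proposition_culminate}, we have $\omega(v \cdot \av \bv^{k+1}) = \varphi(v \cdot \av \bv^{k+1}) = \varphi(v) \cdot 2\dv \cdot \cv^{k}$. So the claim reduces to showing
$$\varphi_{t}(v \cdot \av \bv^{k} \cdot x) = \varphi(v) \cdot \dv \cdot \cv^{k}.$$

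Next, apply Proposition~\ref{proposition_varphi_t} with $w = v \cdot \av \bv^k \cdot x$. Since $\lambda_{t}$ vanishes except on monomials of the form $\bv^m$ or $\bv^m\av$, any break inside $v$ would leave the middle $\av$ internal to $w_{(2)}$ and thus produce a zero term. Only the following $k+2$ splits survive: breaking at the middle $\av$ gives $\varphi(v) \cdot \bv \cdot (\av-\bv)^{k+1}$; breaking at the $j$-th middle $\bv$ (for $1 \leq j \leq k$) gives $\varphi(v \cdot \av \bv^{j-1}) \cdot \bv \cdot (\av-\bv)^{k-j+1}$; and breaking at the trailing letter $x$ gives $\varphi(v \cdot \av \bv^{k}) \cdot \bv$. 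Note the independence from whether $x = \av$ or $x = \bv$, which is the source of the uniform right-hand side. Using (\ref{equation_varphi_a}), (\ref{equation_varphi_a_b}), (\ref{equation_varphi_b_b}) to evaluate $\varphi(v \cdot \av) = \varphi(v) \cdot \cv$ and $\varphi(v \cdot \av \bv^{j}) = \varphi(v) \cdot 2\dv \cdot \cv^{j-1}$ for $j \geq 1$, the entire sum factors as $\varphi(v) \cdot A_{k}$, where
$$A_{k} = \bv(\av-\bv)^{k+1} + \cv\bv(\av-\bv)^{k} + \sum_{j=2}^{k} 2\dv \cv^{j-2} \bv (\av-\bv)^{k-j+1} + 2\dv\cv^{k-1}\bv.$$

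The main (and only non-bookkeeping) step is the noncommutative polynomial identity $A_{k} = \dv \cv^{k}$ in $\zab$. I would verify this by first using the easy expansion identity $\cv\bv + \bv(\av-\bv) = \dv$ to combine the first two summands into $\dv(\av-\bv)^{k}$, and then pulling $\dv$ out on the left:
$$A_{k} = \dv \cdot \left[(\av-\bv)^{k} + 2\sum_{i=0}^{k-1} \cv^{i} \bv (\av-\bv)^{k-1-i}\right].$$
The bracketed expression equals $\cv^{k}$ by the noncommutative telescoping identity
$$\cv^{k} - (\av-\bv)^{k} = \sum_{i=0}^{k-1} \cv^{k-1-i}\bigl(\cv - (\av-\bv)\bigr)(\av-\bv)^{i} = 2 \sum_{i=0}^{k-1} \cv^{k-1-i} \bv (\av-\bv)^{i},$$
after reindexing. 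The hard part in this argument is spotting the right way to collapse the coproduct sum; once the reduction to the telescoping identity is in hand, the rest is essentially a direct calculation.
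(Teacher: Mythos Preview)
Your proof is correct and follows essentially the same route as the paper: both apply Proposition~\ref{proposition_varphi_t}, discard the coproduct splits inside $v$ because $\lambda_{t}$ vanishes there, factor out $\varphi(v)$ via the recursions~(\ref{equation_varphi_a})--(\ref{equation_varphi_a_b}), and reduce to the identity $A_{k}=\dv\cv^{k}$. The only difference is cosmetic: the paper obtains the needed identity $\cv^{k}=(\av-\bv)^{k}+2\sum_{i=0}^{k-1}\cv^{i}\bv(\av-\bv)^{k-1-i}$ by applying Stanley's recursion~(\ref{equation_Stanley_recursion}) to the butterfly poset, whereas you prove it directly by telescoping $\cv^{k}-(\av-\bv)^{k}$.
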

\begin{proof_special}
Using Proposition~\ref{proposition_varphi_t}
we have
\begin{eqnarray}
       \varphi_{t}(v \cdot \av \bv^{k}\cdot x)
     -
       \kappa(v \cdot \av \bv^{k} \cdot x)
  & = &
       \varphi(v \cdot \av \bv^{k}) \cdot \bv \cdot \lambda_{t}(1)
     +
       \varphi(v \cdot \av) \cdot \bv \cdot \lambda_{t}(\bv^{k-1} \cdot x)
\nonumber \\
  &   &
     +
       \varphi(v) \cdot \bv \cdot \lambda_{t}(\bv^{k} \cdot x)
     +
       \sum_{i+j=k-2}
           \varphi(v \cdot \av \bv^{i+1}) \cdot \bv \cdot \lambda_{t}(\bv^{j} \cdot x)
\nonumber \\
  & = &
       \varphi(v)
     \cdot
       \left(
       \vphantom{\sum_{i+j=k-2}}
       2 \dv \cv^{k-1} \cdot \bv
     +
       \cv \cdot \bv \cdot (\av-\bv)^{k}
     +
       \bv \cdot (\av-\bv)^{k+1} 
       \right.
\nonumber \\
  &   &
       \left.
     +
       \sum_{i+j=k-2}
           2 \dv \cv^{i} \cdot \bv \cdot (\av-\bv)^{j+1}
       \right) .
\label{equation_messy}
\end{eqnarray}
In order to simplify this expression, consider the butterfly poset
of rank $k$. 
This is the poset consisting
of two rank $i$ elements, for $i = 1, \ldots, k-1$,
adjoined with a minimal and maximal element.
Each of the rank $i$ elements covers
the rank $i-1$ element(s) for $i = 1, \ldots, k-1$.
The butterfly poset
is the unique poset having the $\cd$-index $\cv^{k-1}$.
It is also Eulerian.
Applying~(\ref{equation_Stanley_recursion})
to the butterfly poset, we have
$$      \cv^{k-1}
    =
        (\av-\bv)^{k-1}
      +
        2 \cdot \sum_{i+j=k-2} \cv^{i} \cdot \bv \cdot (\av-\bv)^{j}
.  $$
Using this relation to simplify
equation~(\ref{equation_messy}), we obtain
\begin{eqnarray*}
\hspace*{35 mm}
       \varphi_{t}(v \cdot \av \bv^{k}\cdot x)
     -
       \kappa(v \cdot \av \bv^{k} \cdot x)
  & = &
       \varphi(v)
     \cdot
       \dv \cdot \cv^{k} \\
  & = &
       1/2
     \cdot
       \omega(v \cdot \av \bv^{k+1}) .
\hspace*{35 mm} \qed
\end{eqnarray*}
\end{proof_special}

By combining Lemmas~\ref{lemma_toric_varphi_I}
and~\ref{lemma_toric_varphi_II}, we have
the following proposition.
\begin{proposition}
For an $\ab$-monomial $v$ that begins with the letter $\av$,
the following holds:
$$     \varphi_{t}(v)
    =
       \kappa(v)
    +
       1/2
    \cdot
       \omega(H^{\prime}(v) \cdot \bv)  .  $$
\label{proposition_toric_varphi}
\end{proposition}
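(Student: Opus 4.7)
The plan is to derive the identity as a direct consequence of Lemmas~\ref{lemma_toric_varphi_I} and~\ref{lemma_toric_varphi_II}, via a case analysis on the last letters of $v$. Take $v$ to have length at least two (the only case arising in the application, via $\Psi(\Pzero) = \av \cdot \Psi(\PP)$ for toric arrangements on $T^{n}$ with $n \geq 1$). Such a $v$ admits a unique decomposition of one of two shapes: either $v = v' \cdot \av \cdot x$, when the penultimate letter of $v$ equals $\av$; or $v = v' \cdot \av \cdot \bv^{k} \cdot x$ with $k \geq 1$, when the penultimate letter equals $\bv$. Here $x \in \{\av,\bv\}$ is the last letter of $v$, and $v'$ is a (possibly empty) prefix which begins with $\av$ whenever it is nonempty; in the second shape, $k$ is the length of the maximal $\bv$-run ending immediately before $x$, and the $\av$ preceding this run must exist because $v$ itself starts with $\av$.

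In the first case, Lemma~\ref{lemma_toric_varphi_I} gives $\varphi_{t}(v) = \kappa(v) + (1/2) \cdot \omega(v' \cdot \av \cdot \bv)$; since $H^{\prime}(v) = v' \cdot \av$, the argument of $\omega$ equals $H^{\prime}(v) \cdot \bv$, matching the claim. In the second case, Lemma~\ref{lemma_toric_varphi_II} gives $\varphi_{t}(v) = \kappa(v) + (1/2) \cdot \omega(v' \cdot \av \cdot \bv^{k+1})$; since $H^{\prime}(v) = v' \cdot \av \cdot \bv^{k}$, again the $\omega$-argument equals $H^{\prime}(v) \cdot \bv$, as required.

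The only technical subtlety is the boundary situation $v' = 1$, since the two lemmas as stated ask $v$ to begin with $\av$. A look at their proofs confirms they remain valid when the prefix is empty: they use only the basic evaluations $\varphi(1) = 1$, $\kappa(1) = 1$, $\eta(1) = 2$, $\lambda_{t}(1) = 1$, and $\omega(1) = 1$, together with $\Delta(1) = 0$, which simply eliminates the sum terms $\sum_{v'}\varphi(v'_{(1)}) \cdot \bv \cdots$ that would otherwise appear. The main obstacle is therefore not conceptual but rather ensuring that every admissible $v$ falls into exactly one of the two normal forms and that the boundary case $v' = 1$ produces no extra contribution.
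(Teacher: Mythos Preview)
Your proposal is correct and follows exactly the route the paper intends: the paper's entire proof is the sentence ``By combining Lemmas~\ref{lemma_toric_varphi_I} and~\ref{lemma_toric_varphi_II}, we have the following proposition,'' and your case analysis on the penultimate letter is precisely how that combination is carried out. Your explicit treatment of the boundary case $v'=1$ (and the restriction to $\deg v \geq 2$, which is all that is needed for Theorem~\ref{theorem_toric}) is more careful than the paper, which tacitly assumes the lemmas extend to the empty prefix.
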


We now obtain the main result
for computing  the $\ab$-index
of the face poset of a toric arrangement.
\begin{theorem}
Let $\HH$ be a toric hyperplane arrangement on
the $n$-dimensional torus $T^{n}$
that subdivides the torus into a regular cell complex.
Then the $\ab$-index of the face poset $T_{t}$
can be computed from the 
$\ab$-index of the intersection poset $\PP$
as follows:
$$     \Psi(T_{t})
    =
        (\av-\bv)^{n+1}
    +
       \frac{1}{2}
     \cdot
       \omega(\av \cdot H^{\prime}(\Psi(\PP)) \cdot \bv)^{*}  .  $$
\label{theorem_toric}
\end{theorem}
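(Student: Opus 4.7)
The plan is to combine Theorem~\ref{theorem_poset_varphi_toric} with the explicit evaluation of $\varphi_{t}$ given by Proposition~\ref{proposition_toric_varphi}. Specifically, Theorem~\ref{theorem_poset_varphi_toric} expresses $\Psi(T_{t})^{*}$ as $\varphi_{t}(\Psi(\Pzero))$, so our task is to evaluate $\varphi_{t}$ on $\Psi(\Pzero)$ and then take the reverse.

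First I would establish the identity $\Psi(\Pzero) = \av \cdot \Psi(\PP)$. This follows by applying Stanley's recursion~(\ref{equation_Stanley_recursion}) to $\Pzero$, splitting off the term coming from the new minimal element, and recognizing the remaining sum as $\av \cdot \Psi(\PP)$; it is the direct analogue of the fact $\Psi(\Lzero)=\av\cdot\Psi(\LL)$ used in the central case. This identity is crucial because it guarantees that every $\ab$-monomial appearing in $\Psi(\Pzero)$ begins with the letter $\av$, placing us squarely in the hypothesis of Proposition~\ref{proposition_toric_varphi}.

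Next, by linearity I would apply Proposition~\ref{proposition_toric_varphi} to obtain
\begin{equation*}
\varphi_{t}(\av\cdot\Psi(\PP))
 = \kappa(\av\cdot\Psi(\PP))
 + \tfrac{1}{2}\cdot\omega\bigl(H^{\prime}(\av\cdot\Psi(\PP))\cdot\bv\bigr).
\end{equation*}
For the first term I would use that $\kappa$ is an algebra map together with the identity~(\ref{equation_kappa}) applied to $\PP$ (which has rank $n+1$), giving
\begin{equation*}
\kappa(\av\cdot\Psi(\PP)) = (\av-\bv)\cdot(\av-\bv)^{n} = (\av-\bv)^{n+1}.
\end{equation*}
For the second term I would verify the straightforward identity $H^{\prime}(\av\cdot u) = \av\cdot H^{\prime}(u)$ on any $\ab$-monomial $u$ of positive degree and conclude $H^{\prime}(\av\cdot\Psi(\PP)) = \av\cdot H^{\prime}(\Psi(\PP))$.

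Assembling these pieces yields
\begin{equation*}
\Psi(T_{t})^{*} = (\av-\bv)^{n+1} + \tfrac{1}{2}\cdot\omega\bigl(\av\cdot H^{\prime}(\Psi(\PP))\cdot\bv\bigr),
\end{equation*}
and the theorem follows by applying the reversal involution to both sides, noting that $(\av-\bv)^{n+1}$ is self-reverse since every factor $\av-\bv$ is. There is no real obstacle here: all of the work has been front-loaded into Theorem~\ref{theorem_poset_varphi_toric} and Proposition~\ref{proposition_toric_varphi}. The only point requiring care is the small bookkeeping around the reversal and the behavior of $H^{\prime}$ under left multiplication by $\av$, both of which are elementary once spelled out.
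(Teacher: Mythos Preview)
Your proposal is correct and is precisely the argument the paper has in mind: the theorem is stated immediately after Proposition~\ref{proposition_toric_varphi} without proof, because it follows at once from Theorem~\ref{theorem_poset_varphi_toric}, the identity $\Psi(\Pzero)=\av\cdot\Psi(\PP)$, and Proposition~\ref{proposition_toric_varphi}, exactly as you describe. Your bookkeeping on $\kappa$, on $H^{\prime}(\av\cdot u)=\av\cdot H^{\prime}(u)$, and on the self-reversal of $(\av-\bv)^{n+1}$ is all accurate.
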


Observe that in
Lemmas~\ref{lemma_toric_varphi_I} and~\ref{lemma_toric_varphi_II},
Proposition~\ref{proposition_toric_varphi}
and
Theorem~\ref{theorem_toric}
no
rational coefficients were introduced.
Only the $\ab$-monomial $\av^{n}$ is mapped
to a $\cd$-polynomial with an odd coefficient,
hence $1/2 \cdot \omega(v \cdot \bv)$
has all integer coefficients.

\addtocounter{theorem}{-20}
\begin{continuation}
{\rm
The flag $f$-vector of the intersection poset $\PP$ in
Example~\ref{example_toric_two}
is given by
$(f_{\emptyset},f_{1},f_{2},f_{12}) = (1,3,7,15)$,
the flag $h$-vector by
$(h_{\emptyset},h_{1},h_{2},h_{12}) = (1,2,6,6)$,
and so the  $\ab$-index is 
$\Psi(P) = \maa + 2 \cdot \mba + 6 \cdot \mab + 6 \cdot \mbb$.
Thus 
\begin{eqnarray*}
       \Psi(T_{t})
  & = &
        (\av-\bv)^{3}
    +
       1/2
     \cdot
       \omega(\av \cdot 
              H^{\prime}(\maa + 2 \cdot \mba + 6 \cdot \mab + 6 \cdot \mbb)
              \cdot \bv)^{*}  \\
  & = &
        (\av-\bv)^{3}
    +
       1/2
     \cdot
       \omega(\av \cdot (7 \cdot \ma + 8 \cdot \mb)
                \cdot \bv)^{*}  \\
  & = &
        (\av-\bv)^{3}
    +
       1/2
     \cdot
       \omega(7 \cdot \maab + 8 \cdot \mabb)^{*}  \\
  & = &
        (\av-\bv)^{3}
    +
       7 \cdot \mdc + 8 \cdot \mcd ,
\end{eqnarray*}
which agrees with the
calculation in Example~\ref{example_toric_two}.
}
\end{continuation}
\addtocounter{theorem}{19}

Theorem~\ref{theorem_toric}
gives a different approach 
from
Corollary~\ref{corollary_f_vector_I}
for determining 
the $f$-vector of $T_{t}$.
For notational ease, for positive integers $i$ and $j$,
let $[i,j] = \{ i, i+1, \ldots, j\}$
and
$[j] = \{1, \ldots, j\}$.

\begin{corollary}
The number of $i$-dimensional regions in the subdivision $T_{t}$
of the $n$-dimensional torus is given by the following
sum
of  flag $h$-vector entries from
the intersection poset $\PP$:
$$
      f_{i+1}(T_{t})
  =
      h_{[n-i,n]}(\PP)
   +
      h_{[n-i,n-1]}(\PP) 
   +
      h_{[n-i+1,n]}(\PP)
   +
      h_{[n-i+1,n-1]}(\PP)     ,
$$
for $1 \leq i \leq n-1$.
The number
of vertices is given by
$f_{1}(T_{t}) = 1 + h_{n}(\PP)$
and the number of
maximal regions
by
$f_{n+1}(T_{t})
 = h_{[n-1]}(\PP)
 + h_{[n]}(\PP)$.
\label{corollary_f_vector_II}
\end{corollary}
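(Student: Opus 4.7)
The plan is to read off the singleton flag vector entry $f_{\{i+1\}}(T_t)$ directly from the formula in Theorem~\ref{theorem_toric}. For any graded poset $P$, one has $f_{\{k\}}(P) = 1 + h_{\{k\}}(P)$, and $h_{\{i+1\}}(T_t)$ is the coefficient of $u_{\{i+1\}} = \av^i \bv \av^{n-i}$ in $\Psi(T_t)$. Since $[\av^i \bv \av^{n-i}](\av-\bv)^{n+1} = -1$, the additive $1$ is absorbed by the $(\av-\bv)^{n+1}$ term, and applying the reversal involution to the $\omega$-term reduces the corollary to showing
\[
f_{i+1}(T_t) \;=\; \tfrac{1}{2}\,[\av^{n-i}\bv\av^i]\,\omega\bigl(\av \cdot H^{\prime}(\Psi(\PP)) \cdot \bv\bigr).
\]

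The monomial $\av^{n-i}\bv\av^i$ contains exactly one $\bv$. Each $\dv$ in a $\cd$-polynomial contributes exactly one $\bv$ to every summand of its $\ab$-expansion, so the only $\cd$-monomials that can contribute to this coefficient are those with at most one $\dv$: namely $\cv^{n+1}$, together with $\cv^{n-i-1}\dv\cv^i$ (when $i<n$) and $\cv^{n-i}\dv\cv^{i-1}$ (when $i>0$), each of which has $\ab$-coefficient~$1$ at the monomial $\av^{n-i}\bv\av^i$. To compute the coefficient $\phi_M$ of each such $M$ in $\omega(\av \cdot H^{\prime}(\Psi(\PP)) \cdot \bv)$, I will unpack the greedy left-to-right parsing that defines $\omega$: an $\ab$-monomial $w$ satisfies $\omega(w) = 2^{d} M$, where $d$ is the number of $\dv$'s in $M$, exactly when $w$ decomposes according to the block structure of $M$, with each $\cv$-block a single letter containing no $\av$ immediately followed by a $\bv$, and each $\dv$-block equal to $\av\bv$.

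Since every $\ab$-monomial in $\av \cdot H^{\prime}(\Psi(\PP)) \cdot \bv$ begins with $\av$ and ends with $\bv$, these constraints pin down unique preimages: no preimage for $\cv^{n+1}$, the monomial $\av^{n-i}\bv^{i+1}$ for $\cv^{n-i-1}\dv\cv^i$, and the monomial $\av^{n-i+1}\bv^i$ for $\cv^{n-i}\dv\cv^{i-1}$. Stripping the leading $\av$ and trailing $\bv$ from each preimage and invoking the identity $H^{\prime}(u_S) = u_{S \cap [n-1]}$ yields
\[
\phi_{\cv^{n-i-1}\dv\cv^i} = 2\bigl(h_{[n-i,n-1]}(\PP) + h_{[n-i,n]}(\PP)\bigr),
\]
\[
\phi_{\cv^{n-i}\dv\cv^{i-1}} = 2\bigl(h_{[n-i+1,n-1]}(\PP) + h_{[n-i+1,n]}(\PP)\bigr).
\]
Summing these and dividing by $2$ gives the four-term formula for $1 \le i \le n-1$. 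The boundary cases $i = 0$ and $i = n$ follow from the same argument, since then only one of the two middle $\cd$-monomials is defined and the degenerate rank intervals $[n,n-1]$ or $[n+1,n]$ equal $\emptyset$, with $h_\emptyset(\PP) = 1$ supplying the additive $1$ in $f_1(T_t) = 1 + h_n(\PP)$. The main technical obstacle will be verifying that the greedy parsing admits no additional preimages arising from accidental $\av\bv$ pairs straddling a block boundary, which requires careful case analysis at the junctions between consecutive $\cv$-blocks and the $\dv$-block.
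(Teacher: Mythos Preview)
Your approach is correct and essentially identical to the paper's proof: both start from $f_{i+1}(T_t)=1+h_{\{i+1\}}(T_t)$, use Theorem~\ref{theorem_toric} to absorb the $+1$ into the $(\av-\bv)^{n+1}$ term, identify the two $\cv^{j}\dv\cv^{k}$ monomials contributing to the coefficient of $\av^{n-i}\bv\av^{i}$, and trace back through $\omega$, the outer $\av\cdots\bv$, and $H^{\prime}$ to land on the four flag $h$-vector entries. The only difference is presentational: the paper records the preimage step tersely as an inner-product identity, while you spell out the greedy parsing. Your flagged ``main technical obstacle'' is in fact routine---once the monomial is forced to start with $\av$ and end with $\bv$, the first $\cv$-block must be all $\av$'s and the last $\cv$-block all $\bv$'s, so the preimage is unique; this deserves a sentence, not a case analysis.
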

\begin{proof}
Let $\pair{\cdot}{\cdot}$ denote the inner product on $\zab$
defined by $\pair{u}{v} = \delta_{u,v}$
for two $\ab$-monomials $u$ and $v$.
For $1 \leq i \leq n-1$ we have
\begin{eqnarray*}
         f_{i+1}(T_{t})
  & = &
         1 + h_{i+1}(T_{t}) \\
  & = &
         1 + \pair{\av^{i} \bv \av^{n-i}}{\Psi(T_{t})} \\
  & = &
       \frac{1}{2}
     \cdot
       \pair{\av^{i} \bv \av^{n-i}}
       {\omega(\av \cdot H^{\prime}(\Psi(\PP)) \cdot \bv)^{*}}  \\
  & = &
       \frac{1}{2}
     \cdot
       [\cv^{i-1} \dv \cv^{n-i}]
       \omega(\av \cdot H^{\prime}(\Psi(\PP)) \cdot \bv)^{*}  
    +
       \frac{1}{2}
     \cdot
       [\cv^{i} \dv \cv^{n-i-1}]
       \omega(\av \cdot H^{\prime}(\Psi(\PP)) \cdot \bv)^{*}  \\
  & = &
       \pair{\av^{n-i} \cdot \av\bv \cdot \bv^{i-1}
              +
             \av^{n-i-1} \cdot \av\bv \cdot \bv^{i}}
            {\av \cdot H^{\prime}(\Psi(\PP)) \cdot \bv} \\
  & = &
       \pair{\av^{n-i-1} \cdot (\av+\bv) \cdot \bv^{i-1}}
            {H^{\prime}(\Psi(\PP))} \\
  & = &
       \pair{\av^{n-i-1} \cdot (\av+\bv) \cdot \bv^{i-1} \cdot (\av+\bv)}
            {\Psi(\PP)} .
\end{eqnarray*}
Expanding in terms of the flag $h$-vector the result follows.
The expressions for $f_{1}$ and $f_{n+1}$ are
obtained by similar calculations.
\end{proof}

The fact that Corollaries~\ref{corollary_f_vector_I}
and~\ref{corollary_f_vector_II}
are equivalent follows from
the coalgebra techniques
in Theorem~\ref{theorem_Ehrenborg_Readdy}.

\section{The complex of unbounded regions}
\label{section_affine}

\subsection{Zaslavsky and Bayer--Sturmfels}

The {\em unbounded Zaslavsky invariant} is defined by
$$    \Zub(P)
    =
      Z(P) - 2 \cdot Z_{b}(P)  .  $$
As the name suggests, the number of unbounded regions in 
a non-central
arrangement is given by this invariant.
By taking the difference of the
two statements in Theorem~\ref{theorem_Zaslavsky_poset} part (ii),
we immediately have the following result.
\begin{lemma}
For a non-central hyperplane arrangement $\HH$ the number of
unbounded regions is given by $\Zub(\LL)$,
where $\LL$
is the intersection lattice of the arrangement $\HH$.
\label{lemma_Zaslavsky_unbounded}
\end{lemma}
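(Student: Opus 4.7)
The plan is to derive the lemma as an immediate subtraction from Theorem~\ref{theorem_Zaslavsky_poset}(ii), which is the Zaslavsky theorem rephrased in terms of the invariants $Z$ and $Z_b$ on the intersection lattice $\LL$. That theorem already gives both quantities needed: the total number of regions in a non-central arrangement is $Z(\LL) - Z_b(\LL)$, and the number of bounded regions is $Z_b(\LL)$. Since the regions of $\HH$ partition into bounded and unbounded ones, the number of unbounded regions is just the difference of these two expressions.

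Concretely, the single step is to compute
\begin{equation*}
\#\{\text{unbounded regions}\}
  = \bigl(Z(\LL) - Z_b(\LL)\bigr) - Z_b(\LL)
  = Z(\LL) - 2 \cdot Z_b(\LL),
\end{equation*}
and then recognize the right-hand side as $\Zub(\LL)$ by the definition given at the start of the subsection.

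There is no real obstacle here, since the definition $\Zub(P) = Z(P) - 2 Z_b(P)$ has been engineered precisely so that the arithmetic works out. The mild subtlety, if any, is noticing that one does need the arrangement to be \emph{non-central} in order to apply part (ii) of Theorem~\ref{theorem_Zaslavsky_poset}; for central arrangements the statement is still formally true but vacuous, since there are no unbounded regions in the central setup as phrased here (every region is a cone emanating from the origin, and "bounded" in the relevant sense is handled separately). The real purpose of packaging this identity as a named lemma is bookkeeping: it introduces $\Zub$ as the affine analogue of $Z$ (central case) and $\Zt$ (toric case), so that the subsequent Bayer--Sturmfels-type results and the coalgebraic computation of the $\cd$-index of the complex of unbounded regions can be stated with the same template as the toric version in Section~\ref{section_toric}.
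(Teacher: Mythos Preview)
Your proposal is correct and matches the paper's approach exactly: the paper states that the lemma follows ``by taking the difference of the two statements in Theorem~\ref{theorem_Zaslavsky_poset} part (ii),'' which is precisely the subtraction you carry out. Your aside about the central case is slightly off (in a central arrangement every region is an unbounded cone, not a bounded one), but this is tangential commentary and does not affect the argument.
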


\begin{figure}
\begin{center}
\scalebox{.7}{\epsfig{file=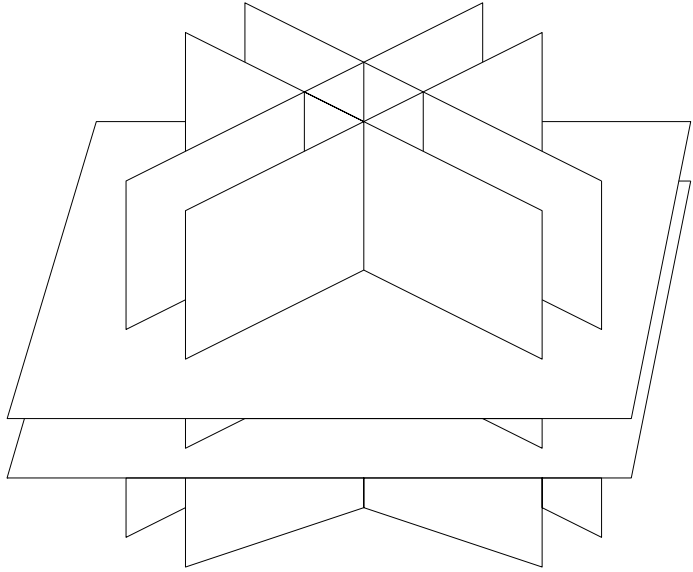}}
\end{center}
\caption{The non-central arrangement $x,y,z=0,1$.}
\label{figure_affine}
\end{figure}

Let $\HH$ be a non-central hyperplane arrangement 
in $\Rrr^n$
with intersection
lattice $\LL$ having the empty set $\emptyset$
as the maximal element.
Let $\Lub$ denote 
the {\em unbounded intersection lattice},
that is,
the subposet of the intersection lattice
consisting of all affine subspaces 
with the points (dimension zero affine subspaces) omitted
but with the empty set $\emptyset$ continuing to be the maximal element.
Equivalently, the poset
$\Lub$ is the rank-selected poset
$\LL([1, n-1])$,
that is,
the poset
$\LL$ with the coatoms removed.

Let $T$ be the face lattice of the arrangement
$\HH$ with the minimal element $\hz$ denoting the empty face
and the maximal element denoted by $\ho$.
Similarly, let $T_{ub}$ denote the set of all faces
in the face lattice~$T$ which are not bounded.
Observe that $T_{ub}$ includes the minimal and maximal elements of $T$
and that $T_{ub}$ is the face poset of
an $(n-1)$-dimensional sphere. 
Pick~$R$ large enough
so that all of the bounded faces
are strictly inside a ball of radius~$R$.
Intersect
the arrangement~$\HH$ with a sphere of radius $R$.
The resulting cell complex has face poset $T_{ub}$.
Our goal is to compute the $\cd$-index of $T_{ub}$ in terms
of the $\ab$-index of $\Lub$.

The collection of unbounded
faces of the arrangement $\HH$
forms a lower order ideal in the poset $T^{*}$.
Let~$Q$ be the subposet of $T^{*}$
consisting of this ideal with a  maximal element $\ho$
adjoined.
We define the rank of an element in $Q$ to be its rank
in the original poset $T^{*}$, that is, for $x \in Q$
let $\rho_{Q}(x) = \rho_{T^{*}}(x)$.
This rank convention will simplify the later arguments.
As posets $T_{ub}^{*}$ and $Q$ are isomorphic.
However, since their rank functions differ, their $\ab$-indexes 
satisfy
$\Psi(T_{ub})^{*} \cdot (\av-\bv) = \Psi(Q)$.

Restrict the zero map
$z : T^{*} \longrightarrow \Lzero$
to 
form the map $\zub : Q \longrightarrow \Lzero$.
Note that the map $\zub$ is order- and rank-preserving,
but not necessarily surjective.
As before we view the map $\zub$ as a map from the set of chains
of $Q$ to the set of chains of $\Lzero$.
We have the following theorem, which is analogous
to the Bayer--Sturmfels result
(Theorem~\ref{theorem_Bayer_Sturmfels}).
\begin{theorem}
Let $\HH$ be a non-central hyperplane arrangement
with intersection lattice $\LL$.
Let $c=\{\hz = x_0 < x_1 < \dots < x_k = \ho\}$ be a chain in
$\Lzero$ with $k \geq 2$.
Then the cardinality of the inverse image of the chain $c$
under $\zub$ is given by
$$      |\zub^{-1}(c)|
     =
        \prod_{i=2}^{k-1}
             Z([x_{i-1},x_{i}])
       \cdot
             \Zub([x_{k-1},x_{k}]) . $$
\label{theorem_Bayer_Sturmfels_unbounded}
\end{theorem}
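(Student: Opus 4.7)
The plan is to adapt the proof of Theorem~\ref{theorem_Bayer_Sturmfels_toric}, replacing the toric Zaslavsky invariant $\Zt$ with the unbounded one $\Zub$. Fixing the chain $c = \{\hz = x_0 < x_1 < \cdots < x_k = \ho\}$ in $\Lzero$, I count chains $d = \{\hz = y_0 < y_1 < \cdots < y_k = \ho\}$ in $Q$ satisfying $\zub(y_i) = x_i$ for every $i$. The element $y_0$ is forced to be the minimum of $Q$, namely $\Rrr^n$, and this is compatible with $x_0 = \hz$. I then build the $y_i$ from the top down.

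First I would select $y_{k-1}$. The condition $\zub(y_{k-1}) = x_{k-1}$ requires $y_{k-1}$ to have affine hull $x_{k-1}$, so it is a top-dimensional region of the subarrangement $\HH|_{x_{k-1}}$; membership in $Q$ additionally demands that it be unbounded. After identifying $[x_{k-1}, x_k] = [x_{k-1}, \emptyset]$ with (an augmentation of) the intersection lattice of $\HH|_{x_{k-1}}$, Lemma~\ref{lemma_Zaslavsky_unbounded} says the number of choices for $y_{k-1}$ equals $\Zub([x_{k-1}, x_k])$.

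Next I would select each $y_i$ with $i$ decreasing from $k-2$ down to $1$, mimicking the original Bayer--Sturmfels argument used in the central case. Given $y_{i+1}$, the element $y_i$ must satisfy $\zub(y_i) = x_i$ together with $y_{i+1} \subset \overline{y_i}$; this says $y_i$ is a top-dimensional region of $\HH|_{x_i}$ whose closure contains $y_{i+1}$. Because such a $y_i$ contains the already-unbounded $y_{i+1}$ in its closure, it is automatically unbounded and hence lies in $Q$ with no further restriction. The localization of $\HH|_{x_i}$ around an interior point of $y_{i+1}$ is a central arrangement with intersection lattice $[x_i, x_{i+1}]$, so by Theorem~\ref{theorem_Zaslavsky_poset}(i) the number of such $y_i$ equals $Z([x_i, x_{i+1}])$. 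Multiplying through $i = 1, \ldots, k-2$ and combining with the factor from $y_{k-1}$ yields the asserted product.

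The only subtlety concerns the factor $\Zub([x_{k-1}, x_k])$ when the subarrangement $\HH|_{x_{k-1}}$ turns out to be central. In that case the interval $[x_{k-1}, \emptyset]$ in $\LL$ is the intersection lattice of the subarrangement with one extra top element $\emptyset$ adjoined above the common intersection point. A direct M\"obius computation gives $\mu(x_{k-1}, \emptyset) = 0$, so that $Z_b$ vanishes on this interval and $\Zub$ reduces to $Z$ of the underlying central intersection lattice; since every region of a central arrangement is unbounded, this correctly counts the unbounded regions of $\HH|_{x_{k-1}}$ in that boundary case.
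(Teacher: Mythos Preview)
Your argument is correct and follows essentially the same route as the paper: choose $y_{k-1}$ first as an unbounded region of the restricted arrangement (counted by $\Zub([x_{k-1},x_k])$ via Lemma~\ref{lemma_Zaslavsky_unbounded}), observe that all subsequent $y_i$ are then automatically unbounded, and finish with the standard Bayer--Sturmfels localization count giving the $Z([x_{i-1},x_i])$ factors. Your final paragraph handling the case where $\HH|_{x_{k-1}}$ happens to be central is a detail the paper does not spell out, and your M\"obius computation showing $Z_b$ vanishes (so $\Zub$ still returns the correct count) is a nice addition.
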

\begin{proof}
We need to count the number of ways we can select a chain
$d = \{\hz = y_0 < y_1 < \dots < y_k = \ho\}$ in 
the poset of unbounded regions $Q$
such that $\zub(y_{i}) = x_{i}$.
The number of ways to select
the element~$y_{k-1}$ in $Q$
is the number of unbounded regions
in the arrangement restricted to the subspace $x_{k-1}$.
By Lemma~\ref{lemma_Zaslavsky_unbounded}
this can be done in $\Zub([x_{k-1},x_{k}])$ number of ways.
Since $y_{k-1}$ is an unbounded face of the arrangement
and all other elements in the chain $d$
contain the face $y_{k-1}$, the other elements must be unbounded.

The remainder of the proof is the same as
that of Theorem~\ref{theorem_Bayer_Sturmfels_toric}.
\end{proof}

\begin{corollary}
The flag $f$-vector entry
$f_{S}(T_{ub})$
is divisible by $2^{|S|}$
for any index set $S \subseteq \{1, \ldots, n\}$.
\end{corollary}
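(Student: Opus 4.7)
The plan is to mimic the proof of the toric analogue Corollary~\ref{corollary_evenness}, but invoke Theorem~\ref{theorem_Bayer_Sturmfels_unbounded} in place of its toric counterpart and exploit the stronger divisibility provided by the unbounded Zaslavsky invariant.  Given $S \subseteq \{1, \ldots, n\}$, a chain of $T_{ub}$ whose inner elements have rank set $S$ contains $|S|+2$ elements.  Under the poset isomorphism $T_{ub}^{*} \cong Q$, such chains correspond bijectively to chains in $Q$ of the same length.  Since $\zub \colon Q \longrightarrow \Lzero$ is order- and rank-preserving, each such chain in $Q$ projects to a unique chain $c = \{\hz = x_{0} < x_{1} < \cdots < x_{|S|+1} = \ho\}$ in $\Lzero$, and Theorem~\ref{theorem_Bayer_Sturmfels_unbounded} gives
$$
    |\zub^{-1}(c)|
  = \prod_{i=2}^{|S|} Z([x_{i-1}, x_{i}])
           \cdot \Zub([x_{|S|}, x_{|S|+1}]),
$$
a product of exactly $|S|$ Zaslavsky invariants.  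Hence $f_{S}(T_{ub})$ is the sum of these products as $c$ ranges over the chains in $\Lzero$ of the appropriate rank profile.

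Next I would verify that each factor above is an even integer.  Each interior interval $[x_{i-1}, x_{i}]$, with $x_{i} \neq \ho$, is isomorphic to the intersection lattice of a positive-rank central subarrangement, whose chambers pair under $x \mapsto -x$, so $Z([x_{i-1}, x_{i}])$ is even.  For the top factor the identity $\Zub = Z - 2 Z_{b}$ shows that $\Zub$ has the same parity as $Z$, and here $\Zub([x_{|S|}, \ho])$ equals the number of unbounded chambers of the arrangement obtained by restricting $\HH$ to $x_{|S|}$.  This restriction remains essential (essentialness descends to any flat: since the original normals span $\Rrr^{n}$ and the normals of the hyperplanes containing $x_{|S|}$ span $x_{|S|}^{\perp}$, the remaining normals project onto a spanning family of $x_{|S|}$), so its unbounded chambers are acted on freely by the antipodal direction-at-infinity involution, which forces their count to be even.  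Consequently each summand $|\zub^{-1}(c)|$ is divisible by $2^{|S|}$, and so is $f_{S}(T_{ub})$.

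The only genuine point requiring verification is the evenness of the top factor $\Zub$; this is what upgrades the divisibility from the toric bound $2^{|S|-1}$ to $2^{|S|}$, and it hinges on the essentialness of $\HH$ being preserved under affine restriction, which in turn is what guarantees that no unbounded chamber of the restricted arrangement is self-paired at infinity.
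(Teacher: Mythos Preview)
Your overall strategy is exactly the paper's: write $f_{S}(T_{ub})$ as a sum of the products $|\zub^{-1}(c)|$ from Theorem~\ref{theorem_Bayer_Sturmfels_unbounded} and observe that each of the $|S|$ Zaslavsky-type factors is even. The paper's own proof is a single sentence that merely asserts $\Zub$ is even, so your added detail is welcome.

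The gap is in your geometric justification of that last assertion. The ``antipodal direction-at-infinity involution'' on unbounded chambers is not well defined. Intersecting the restricted arrangement with a large sphere $S_{R}$ does produce a cell complex whose facets correspond to the unbounded chambers, but the antipodal map $x\mapsto -x$ on $S_{R}$ does \emph{not} carry cells to cells, since a non-central arrangement is not centrally symmetric (try $\{x=0,\ x=1,\ y=0\}$ in $\Rrr^{2}$: the arc on $S_{R}$ coming from the strip $0<x<1,\ y>0$ is not sent to any single cell). Passing instead to recession cones does not help either, because distinct unbounded regions can share a recession cone, so antipodality of cones does not induce a pairing of regions. A clean replacement is purely algebraic: with $\chi$ the characteristic polynomial of the arrangement restricted to the $d$-dimensional flat $x_{|S|}$, Theorem~\ref{theorem_Zaslavsky} gives $\Zub([x_{|S|},\ho]) = (-1)^{d}\bigl(\chi(-1)-\chi(1)\bigr)$, and
\[
\chi(-1)-\chi(1)=\sum_{y}\mu(\hz,y)\bigl((-1)^{\dim y}-1\bigr)=-2\sum_{\dim y\ \text{odd}}\mu(\hz,y)
\]
is manifestly even (and the boundary case $d=0$ gives $\Zub=0$). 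With this substitution your argument goes through and coincides with the paper's.
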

\begin{proof}
The proof is the same as 
Corollary~\ref{corollary_evenness}
with the extra observation that
the Zaslavsky invariant $\Zub$ is even.
\end{proof}

\subsection{The connection between posets and coalgebras}

Define $\lambda_{ub}$ by
$\lambda_{ub} = \eta - 2 \cdot \beta$.
By equations~(\ref{equation_beta}) 
and~(\ref{equation_eta}),
for a graded poset $P$ we have
$$  \lambda_{ub}(\Psi(P)) = \Zub(P) \cdot (\av-\bv)^{\rho(P)-1}  .  $$
Define a sequence of functions
$\varphi_{ub,k}\colon\zab\to\zab$ by $\varphi_{ub,1}=\kappa$ and for
$k>1$,
$$
\varphi_{ub,k}(v)
     =
\sum_{v}
       \kappa(v_{(1)}) \cdot
       \bv \cdot \eta(v_{(2)}) \cdot
       \bv \cdot \eta(v_{(3)}) \cdot \bv \cdots
       \bv \cdot \eta(v_{(k-1)}) \cdot
       \bv \cdot \lambda_{ub}(v_{(k)}).                      
$$
Finally, let $\varphi_{ub}(v)$ be the
sum $\varphi_{ub}(v) = \sum_{k\geq 1}\varphi_{ub,k}(v)$.

Similar to Theorem~\ref{theorem_poset_varphi_toric} we
have the next result.
The proof only differs in replacing
the map
$\zt : T_{t}^{*} \longrightarrow \Pzero$
with 
$\zub : Q \longrightarrow \Lzero$
and the invariant $\Zt$ by $\Zub$.
\begin{theorem}
The $\ab$-index of the poset $Q$ 
of unbounded regions of a
non-central 
arrangement is given by
$$
   \Psi(Q) = \varphi_{ub}(\Psi(\Lzero)).
$$
\label{theorem_poset_varphi_unbounded}
\end{theorem}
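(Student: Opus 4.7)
The plan is to mirror the proof of Theorem~\ref{theorem_poset_varphi_toric} verbatim, substituting the unbounded analogues at each step. Since $\zub \colon Q \to \Lzero$ is rank-preserving, the weight $\wt(d)$ of a chain $d$ in $Q$ depends only on its rank sequence and hence equals $\wt(\zub(d))$. Grouping the chains of $Q$ by their images under $\zub$, I would first rewrite
\[
\Psi(Q) \;=\; \sum_{c} |\zub^{-1}(c)| \cdot \wt(c),
\]
where $c = \{\hz = x_0 < x_1 < \cdots < x_k = \ho\}$ ranges over all chains in $\Lzero$. Chains $c$ that are not in the image of $\zub$ contribute zero, so non-surjectivity is harmless.

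Next, fix $k \geq 2$ and restrict to chains of length $k$. I would substitute the product formula of Theorem~\ref{theorem_Bayer_Sturmfels_unbounded} for $|\zub^{-1}(c)|$, and then recognize each factor as the image of the $\ab$-index of the corresponding interval under the appropriate operator: equation~(\ref{equation_kappa}) gives $(\av-\bv)^{\rho(x_0,x_1)-1} = \kappa(\Psi([x_0,x_1]))$, equation~(\ref{equation_eta}) gives $Z([x_{i-1},x_i]) \cdot (\av-\bv)^{\rho(x_{i-1},x_i)-1} = \eta(\Psi([x_{i-1},x_i]))$ for $2 \leq i \leq k-1$, and the identity $\lambda_{ub}(\Psi(P)) = \Zub(P) \cdot (\av-\bv)^{\rho(P)-1}$ handles the final factor. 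The $\bv$'s in the weight~(\ref{equation_weight}) slot in between.

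Having rewritten the length-$k$ contribution as a sum over chains of a multilinear expression in the $\ab$-indexes of the intervals, I would apply Theorem~\ref{theorem_Ehrenborg_Readdy} with the $k$-multilinear map
\[
M(w_1, \ldots, w_k) \;=\; \kappa(w_1) \cdot \bv \cdot \eta(w_2) \cdot \bv \cdots \bv \cdot \eta(w_{k-1}) \cdot \bv \cdot \lambda_{ub}(w_k)
\]
to convert the chain sum into the $k$-ary coproduct expression, which is exactly $\varphi_{ub,k}(\Psi(\Lzero))$. For the remaining case $k=1$, the unique chain $\{\hz < \ho\}$ in $\Lzero$ has a unique preimage (the chain $\{\hz < \ho\}$ in $Q$), contributing $(\av-\bv)^{\rho(Q)-1} = \kappa(\Psi(\Lzero)) = \varphi_{ub,1}(\Psi(\Lzero))$. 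Summing over all $k \geq 1$ yields the claimed identity $\Psi(Q) = \varphi_{ub}(\Psi(\Lzero))$.

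The only real point of caution, and the thing worth triple-checking, is that the chosen rank convention on $Q$ (inherited from $T^\ast$ rather than from $T_{ub}^\ast$) is precisely what makes $\zub$ rank-preserving with $\rho(Q) = \rho(\Lzero)$; this is the reason the weights transfer without shift and why the proof goes through with no additional $(\av-\bv)$ factor, in contrast to the relation $\Psi(T_{ub})^\ast \cdot (\av-\bv) = \Psi(Q)$ noted earlier. Everything else is a direct transcription of the toric argument with $\zt$ replaced by $\zub$, $\Zt$ by $\Zub$, and $\lambda_t$ by $\lambda_{ub}$.
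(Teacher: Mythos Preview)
Your proposal is correct and takes essentially the same approach as the paper: the paper's own proof consists of a single sentence stating that it mirrors Theorem~\ref{theorem_poset_varphi_toric} with $\zt$ replaced by $\zub$ and $\Zt$ by $\Zub$, which is exactly what you spell out in full. Your remark about the rank convention on $Q$ is an accurate and useful observation that the paper leaves implicit.
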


\subsection{Evaluating the function $\varphi_{ub}$}

In this subsection we
analyze the behavior of $\varphi_{ub}$.
\begin{lemma}
For any $\ab$-monomial $v$,
$$       \varphi_{ub}(v)
     =
         \varphi(v)
       -
         2 \cdot 
         \sum_v \varphi(v_{(1)}) \cdot \bv \cdot \beta(v_{(2)}) . $$
\label{lemma_phi_ub_recursion}
\end{lemma}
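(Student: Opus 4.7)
The plan is to unfold the definition of $\varphi_{ub}$ one level at a time, using the splitting $\lambda_{ub} = \eta - 2\beta$ at the final slot of each $\varphi_{ub,k}$, and then identify the resulting two sums with $\varphi$ and a correction involving $\beta$.

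First I would treat the $k=1$ case separately: since $\varphi_{ub,1} = \kappa = \varphi_1$, it contributes only to the $\varphi(v)$ side and produces no $\beta$-term. For $k \geq 2$, I would substitute $\lambda_{ub}(v_{(k)}) = \eta(v_{(k)}) - 2 \cdot \beta(v_{(k)})$ into the defining expression for $\varphi_{ub,k}$. By linearity this splits as
\begin{eqnarray*}
\varphi_{ub,k}(v)
  & = &
\sum_{v} \kappa(v_{(1)}) \cdot \bv \cdot \eta(v_{(2)}) \cdots \bv \cdot \eta(v_{(k-1)}) \cdot \bv \cdot \eta(v_{(k)}) \\
  &   &
-\: 2 \cdot \sum_{v} \kappa(v_{(1)}) \cdot \bv \cdot \eta(v_{(2)}) \cdots \bv \cdot \eta(v_{(k-1)}) \cdot \bv \cdot \beta(v_{(k)}) ,
\end{eqnarray*}
and the first sum is precisely $\varphi_k(v)$.

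Next, I would rewrite the correction term by invoking coassociativity, exactly as in the proof of Proposition~\ref{proposition_varphi_t}. Using $\Delta^{k-1} = (\Delta^{k-2} \tensor \id) \circ \Delta$, the piece carrying $\kappa(v_{(1)}) \cdot \bv \cdot \eta(v_{(2)}) \cdots \bv \cdot \eta(v_{(k-1)})$ regroups as $\varphi_{k-1}(v_{(1)})$ acting on the first factor of a two-fold coproduct, while $\beta(v_{(k)})$ becomes $\beta(v_{(2)})$ on the second factor. Thus for $k \geq 2$,
$$
\varphi_{ub,k}(v) = \varphi_k(v) - 2 \cdot \sum_v \varphi_{k-1}(v_{(1)}) \cdot \bv \cdot \beta(v_{(2)}).
$$

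Finally I would sum over $k \geq 1$. The $\varphi_k(v)$ terms sum to $\varphi(v)$ by definition. For the correction term, pulling the sum over $k \geq 2$ inside the Sweedler notation gives $\sum_{k \geq 2} \varphi_{k-1}(v_{(1)}) = \sum_{j \geq 1} \varphi_j(v_{(1)}) = \varphi(v_{(1)})$, yielding the claimed identity. No step here is genuinely obstructive: the substance is entirely bookkeeping with Sweedler notation, and the only point requiring any care is the coassociativity step, which must be set up so that the trailing $\bv \cdot \beta(\cdot)$ factor is left intact while the preceding string is recognized as an instance of $\varphi_{k-1}$.
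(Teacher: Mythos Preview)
Your proof is correct and follows essentially the same route as the paper: split off the $k=1$ term, use $\lambda_{ub}=\eta-2\beta$ to write $\varphi_{ub,k}=\varphi_k-2\sum_v\kappa(v_{(1)})\cdot\bv\cdot\eta(v_{(2)})\cdots\bv\cdot\beta(v_{(k)})$, apply the coassociativity identity $\Delta^{k-1}=(\Delta^{k-2}\tensor\id)\circ\Delta$ to recognize the correction as $\varphi_{k-1}(v_{(1)})\cdot\bv\cdot\beta(v_{(2)})$, and sum over $k$. The paper's proof is line-for-line the same argument.
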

\begin{proof}
Using the coassociative identity
$\Delta^{k-1} = (\Delta^{k-2} \tensor \id) \circ \Delta$,
we have for $k \geq 2$
\begin{eqnarray*}
\varphi_{ub,k}(v)
  & = &
\varphi_{k}(v)
  -
2 \cdot 
\sum_{v}            \kappa(v_{(1)}) \cdot
                    \bv \cdot \eta(v_{(2)}) \cdot
                    \bv \cdots
                    \bv \cdot \eta(v_{(k-1)}) \cdot
                    \bv \cdot \beta(v_{(k)}) \\
  & = &
\varphi_{k}(v)
  -
2 \cdot 
\sum_{v}
\sum_{v_{(1)}}      \kappa(v_{(1,1)}) \cdot
                    \bv \cdot \eta(v_{(1,2)}) \cdot
                    \bv \cdots
                    \bv \cdot \eta(v_{(1,k-1)}) \cdot
                    \bv \cdot \beta(v_{(2)}) \\
  & = &
\varphi_{k}(v)
  -
2 \cdot 
\sum_{v}
\varphi_{k-1}(v_{(1)}) \cdot \bv \cdot \beta(v_{(2)}) .
\end{eqnarray*}
The result then follows by
summing over all $k \geq 2$ and adding
$\varphi_{ub,1}(v) = \kappa(v) = \varphi_{1}(v)$.
\end{proof}

\begin{lemma}
Let $v$ be an $\ab$-monomial.  Then 
$$   \varphi_{ub}(v \cdot \av)
   =
      \varphi(v) \cdot (\av - \bv) .   $$
\label{lemma_varphi_ub_I}
\end{lemma}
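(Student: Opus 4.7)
The plan is to apply Lemma~\ref{lemma_phi_ub_recursion} and then evaluate each piece explicitly. By that lemma,
$$\varphi_{ub}(v \cdot \av) = \varphi(v \cdot \av) - 2 \cdot \sum_{v \cdot \av} \varphi((v\cdot\av)_{(1)}) \cdot \bv \cdot \beta((v\cdot\av)_{(2)}),$$
so the work reduces to computing the two summands separately using the Newtonian coproduct and the known recursions for $\varphi$.

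First, I would expand the coproduct of $v \cdot \av$ using the Newtonian condition~(\ref{equation_Newtonian}) together with $\Delta(\av) = 1 \tensor 1$. This yields
$$\Delta(v \cdot \av) = \sum_v v_{(1)} \tensor v_{(2)} \cdot \av + v \tensor 1.$$
Hence the sum in the recursion splits as
$$\sum_v \varphi(v_{(1)}) \cdot \bv \cdot \beta(v_{(2)} \cdot \av) + \varphi(v) \cdot \bv \cdot \beta(1).$$
The key observation is that $\beta$ annihilates any $\ab$-monomial that is not a pure power of $\bv$; since every monomial appearing in $v_{(2)} \cdot \av$ ends in the letter $\av$, each term $\beta(v_{(2)} \cdot \av)$ vanishes. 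Thus only the boundary term $\varphi(v) \cdot \bv \cdot \beta(1) = \varphi(v) \cdot \bv$ survives.

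For the first summand, the recursion~(\ref{equation_varphi_a}) immediately gives $\varphi(v \cdot \av) = \varphi(v) \cdot \cv = \varphi(v) \cdot (\av + \bv)$. Putting the two computations together yields
$$\varphi_{ub}(v \cdot \av) = \varphi(v) \cdot (\av + \bv) - 2 \cdot \varphi(v) \cdot \bv = \varphi(v) \cdot (\av - \bv),$$
as desired. There is no real obstacle here: the result is essentially forced once one recognizes that the trailing $\av$ in $v \cdot \av$ kills every contribution of $\beta$ except the one coming from the $v \tensor 1$ term of the Newtonian coproduct.
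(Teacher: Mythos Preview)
Your proof is correct and essentially identical to the paper's: both apply Lemma~\ref{lemma_phi_ub_recursion}, expand the coproduct of $v\cdot\av$ via the Newtonian relation, observe that $\beta(v_{(2)}\cdot\av)=0$ kills the sum, and finish with $\varphi(v\cdot\av)=\varphi(v)\cdot\cv$ so that $\varphi(v)\cdot(\cv-2\bv)=\varphi(v)\cdot(\av-\bv)$.
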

\begin{proof}
By Lemma~\ref{lemma_phi_ub_recursion}
and the Newtonian relation~(\ref{equation_Newtonian}) we have
$$   \varphi_{ub}(v \cdot \av)
 = 
     \varphi(v \cdot \av) 
   -
     2 \cdot
     \varphi(v) \cdot \bv \cdot \beta(1)
   -
     2 \cdot
     \sum_v \varphi(v_{(1)}) \cdot \bv \cdot \beta(v_{(2)} \cdot \av) .  $$
By equation~(\ref{equation_varphi_a})
$\varphi(v \cdot \av) = \varphi(v) \cdot \cv$.
The summation above is
zero because $\beta(v_{(2)} \cdot \av)$ is always zero.  Hence
$\varphi_{ub}(v \cdot \av) 
   =
  \varphi(v) \cdot (\cv - 2\bv)
   =
  \varphi(v) \cdot (\av - \bv)$.
\end{proof}

\begin{lemma}
Let $v$ be an $\ab$-monomial.  Then
\[
\varphi_{ub}(v \cdot \bv\bv) = \varphi_{ub}(v \cdot \bv) \cdot (\av - \bv).
\]
\label{lemma_varphi_ub_II}
\end{lemma}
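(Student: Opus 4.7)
The plan is to apply the recursion of Lemma~\ref{lemma_phi_ub_recursion} to both sides of the identity and reduce it to a short calculation that uses the algebra-map property of $\beta$ together with the Newtonian coproduct.

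First I would expand both sides using
$$\varphi_{ub}(u) = \varphi(u) - 2\sum_u \varphi(u_{(1)})\cdot\bv\cdot\beta(u_{(2)}),$$
applying this to $u = v\cdot\bv\bv$ for the left-hand side and to $u = v\cdot\bv$ for the right-hand side (then multiplying by $\av-\bv$). By equation~(\ref{equation_varphi_b_b}) we have $\varphi(v\cdot\bv\bv) = \varphi(v\cdot\bv)\cdot\cv = \varphi(v\cdot\bv)\cdot(\av-\bv) + 2\varphi(v\cdot\bv)\cdot\bv$, so the ``free'' parts of the two sides already differ by exactly $2\varphi(v\cdot\bv)\cdot\bv$. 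Consequently, the identity reduces to showing
$$\sum_{v\bv\bv}\varphi((v\bv\bv)_{(1)})\cdot\bv\cdot\beta((v\bv\bv)_{(2)}) - \sum_{v\bv}\varphi((v\bv)_{(1)})\cdot\bv\cdot\beta((v\bv)_{(2)})\cdot(\av-\bv) = \varphi(v\cdot\bv)\cdot\bv.$$

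Next I would compute the two relevant coproducts using the Newtonian relation~(\ref{equation_Newtonian}), obtaining
$$\Delta(v\cdot\bv) = \sum_v v_{(1)}\tensor v_{(2)}\cdot\bv + v\tensor 1,$$
$$\Delta(v\cdot\bv\bv) = \sum_v v_{(1)}\tensor v_{(2)}\cdot\bv\bv + v\tensor\bv + v\cdot\bv\tensor 1.$$
Since $\beta$ is an algebra map with $\beta(\bv) = \av-\bv$ and $\beta(1) = 1$, the relation $\beta(v_{(2)}\cdot\bv\bv) = \beta(v_{(2)}\cdot\bv)\cdot(\av-\bv)$ forces the ``bulk'' contributions (the summands indexed by $\sum_v$) on the two sides to cancel exactly. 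The residual boundary terms on the left are $\varphi(v)\cdot\bv\cdot(\av-\bv)$ and $\varphi(v\cdot\bv)\cdot\bv$, while the only boundary term on the right is $\varphi(v)\cdot\bv\cdot(\av-\bv)$; their difference is precisely $\varphi(v\cdot\bv)\cdot\bv$, as required.

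The principal obstacle is merely careful Sweedler bookkeeping: $\Delta(v\cdot\bv\bv)$ produces three summands while $\Delta(v\cdot\bv)$ produces two, so one must carefully match bulk terms using the multiplicativity of $\beta$ and verify that the leftover boundary terms combine to yield exactly the $\varphi(v\cdot\bv)\cdot\bv$ demanded by equation~(\ref{equation_varphi_b_b}). No ingredient beyond Lemma~\ref{lemma_phi_ub_recursion}, the Newtonian relation, the algebra-map property of $\beta$, and equation~(\ref{equation_varphi_b_b}) is required.
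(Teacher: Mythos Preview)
Your proof is correct and uses exactly the same ingredients as the paper: Lemma~\ref{lemma_phi_ub_recursion}, the Newtonian relation, the multiplicativity of $\beta$, and equation~(\ref{equation_varphi_b_b}). The paper organizes the computation slightly more efficiently by setting $u = v\cdot\bv$ at the outset and applying the recursion only to $\varphi_{ub}(u\cdot\bv)$, so that $(\av-\bv)$ factors out in one step without having to expand both sides and match bulk versus boundary terms; but this is a cosmetic difference, not a substantive one.
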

\begin{proof}
Let $u = v \cdot \bv$. 
Applying
Lemma~\ref{lemma_phi_ub_recursion}
and the Newtonian relation~(\ref{equation_Newtonian})
to $u$ gives
\begin{eqnarray*}
\varphi_{ub}(u \cdot \bv) 
  & = &
\varphi(u \cdot \bv)
  - 
    2 \cdot
    \varphi(u) \cdot \bv \cdot \beta(1)
  - 
    2 \cdot
    \sum_{u} \varphi(u_{(1)}) \cdot \bv \cdot \beta(u_{(2)} \cdot \bv) \\
  & = &
\varphi(u) \cdot (\cv - 2\bv)
  - 
    2 \cdot
    \sum_{u} \varphi(u_{(1)}) \cdot \bv \cdot \beta(u_{(2)}) \cdot (\av - \bv) \\
  & = &
\left(\varphi(u) 
      - 
      2 \cdot
      \sum_{u} \varphi(u_{(1)}) \cdot \bv \cdot 
                 \beta(u_{(2)})
\right) \cdot (\av - \bv) \\
  & = &
\varphi_{ub}(u) \cdot (\av - \bv).
\end{eqnarray*}
Here we have used the two facts
$\varphi(u \cdot \bv) = \varphi(u) \cdot \cv$
and 
$\beta(u_{(2)} \cdot \bv) = \beta(u_{(2)}) \cdot (\av-\bv)$.
\end{proof}

\begin{lemma}
Let $v$ be an $\ab$-monomial.
Then $\varphi_{ub}(v \cdot \ab) = 0$.
\label{lemma_varphi_ub_III}
\end{lemma}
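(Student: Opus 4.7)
The plan is to mirror the approach of Lemmas~\ref{lemma_varphi_ub_I} and~\ref{lemma_varphi_ub_II}: apply the recursion of Lemma~\ref{lemma_phi_ub_recursion} to $v \cdot \ab$, use the Newtonian relation~(\ref{equation_Newtonian}) to expand $\Delta(v \cdot \ab)$, and then exploit the fact that $\beta$ kills every $\ab$-monomial that contains the letter $\av$. This should leave only a few boundary terms which, after using~(\ref{equation_varphi_a}) and~(\ref{equation_varphi_a_b}), cancel exactly.

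First I would write, using Lemma~\ref{lemma_phi_ub_recursion},
$$
\varphi_{ub}(v \cdot \ab)
 =
   \varphi(v \cdot \ab)
 -
   2 \cdot \sum_{v \cdot \ab}
       \varphi((v\cdot\ab)_{(1)}) \cdot \bv \cdot \beta((v\cdot\ab)_{(2)}),
$$
and observe that by~(\ref{equation_varphi_a_b}) the first term equals $2 \varphi(v) \cdot \dv$. The task then reduces to showing that the correction term equals $\varphi(v) \cdot \dv$.

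Next I would expand $\Delta(v \cdot \ab)$ with the Newtonian condition. Since $\Delta(\av) = \Delta(\bv) = 1 \tensor 1$, two applications of~(\ref{equation_Newtonian}) give
$$
\Delta(v \cdot \ab)
 =
   \sum_{v} v_{(1)} \tensor v_{(2)} \cdot \ab
 +
   v \tensor \bv
 +
   v \cdot \av \tensor 1.
$$
In the first family of terms, the second tensor factor $v_{(2)} \cdot \ab$ always contains the letter $\av$, so $\beta$ annihilates it. Only the final two terms contribute. For $v \tensor \bv$ we use $\beta(\bv) = \av - \bv$; for $v \cdot \av \tensor 1$ we use $\beta(1) = 1$ together with~(\ref{equation_varphi_a}), namely $\varphi(v \cdot \av) = \varphi(v) \cdot \cv$. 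Adding these two contributions and using $\cv \cdot \bv = \av\bv + \bv\bv$, the correction term becomes
$$
\varphi(v) \cdot \bv \cdot (\av - \bv) + \varphi(v) \cdot \cv \cdot \bv
 =
\varphi(v) \cdot (\bv\av + \av\bv)
 =
\varphi(v) \cdot \dv,
$$
and substituting back yields $\varphi_{ub}(v \cdot \ab) = 2 \varphi(v) \cdot \dv - 2 \varphi(v) \cdot \dv = 0$.

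I do not anticipate any serious obstacle: the argument is a short coalgebraic computation whose only delicate points are (a) correctly enumerating the three families of terms in the Newtonian expansion of $\Delta(v \cdot \ab)$, and (b) recognizing the collapse $\bv(\av-\bv) + \cv\bv = \dv$ so that the $\varphi(v)\cdot \dv$ contribution assembles cleanly. The key structural fact driving the vanishing is that the factor $\ab$ forces the right-hand tensor slot of $\Delta(v\cdot\ab)$ to contain an $\av$ in all but two boundary terms, and these two boundary terms are exactly what is needed to reproduce the $\dv$ coming from~(\ref{equation_varphi_a_b}).
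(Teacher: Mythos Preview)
Your proposal is correct and is essentially the same argument as the paper's proof: both apply Lemma~\ref{lemma_phi_ub_recursion}, expand $\Delta(v\cdot\ab)$ via the Newtonian relation into the three families you list, use that $\beta(v_{(2)}\cdot\ab)=0$, and then verify $\dv - \bv(\av-\bv) - \cv\bv = 0$ (equivalently your identity $\bv(\av-\bv)+\cv\bv=\dv$) together with~(\ref{equation_varphi_a}) and~(\ref{equation_varphi_a_b}).
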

\begin{proof}
Directly we have
\begin{eqnarray*}
\varphi_{ub}(v \cdot \ab)
  & = &
  \varphi(v \cdot \ab)
   - 
     2 \cdot
     \varphi(v) \cdot \bv \cdot \beta(\bv)
   - 
     2 \cdot
     \varphi(v \cdot \av) \cdot \bv \cdot \beta(1) 
   - 
     2 \cdot
     \sum_v \varphi(v_{(1)}) \cdot \bv \cdot \beta(v_{(2)} \cdot \av \bv) \\
  & = &
  \varphi(v) \cdot 2\dv
- 
  2 \cdot \varphi(v) \cdot \bv \cdot (\av - \bv)
- 
  2 \cdot \varphi(v) \cdot \cv \bv \\
  & = &
  2 \cdot \varphi(v) \cdot (\dv - \bv(\av - \bv) - \cv\bv) \\
  & = &
0 ,
\end{eqnarray*}
where we have used the facts
$\varphi(v \cdot \av\bv) = \varphi(v) \cdot 2\dv$
and 
$\beta(v_{(2)} \cdot \av\bv) = 0$.
\end{proof}

The previous three lemmas enable us to determine $\varphi_{ub}$.
In order to obtain more compact notation, 
define a map $r\colon\zab\to\zab$ by $r(1)=0$,
$r(v \cdot \av)=v$,
and
$r(v \cdot \bv)=0$.
By using the chain definition of the $\ab$-index,
it is straightforward to see that
$\Psi(\Lub) = r(\Psi(\LL))$.

\begin{proposition}
Let $w$ be an $\ab$-polynomial homogeneous of degree greater than zero.
Then
$$    \varphi_{ub}(\av \cdot w)
    =
      \omega(\av \cdot r(w)) \cdot (\av - \bv)   .  $$
\end{proposition}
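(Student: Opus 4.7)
The plan is to reduce to the case of an $\ab$-monomial $w$ by linearity, and then do a case analysis on the final letter of $w$. Recall that $r$ keeps only monomials ending in $\av$ (stripping off the final $\av$) and annihilates monomials ending in $\bv$, so the right-hand side is controlled by this last-letter information. On the left-hand side, the three lemmas just proved (Lemmas \ref{lemma_varphi_ub_I}, \ref{lemma_varphi_ub_II} and \ref{lemma_varphi_ub_III}) are exactly tailored to reduce $\varphi_{ub}$ on monomials ending in $\av$, $\bv\bv$ or $\av\bv$.

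In the first case, suppose $w = u \cdot \av$ for some (possibly empty) monomial $u$, so that $r(w) = u$. Applying Lemma~\ref{lemma_varphi_ub_I} gives $\varphi_{ub}(\av \cdot u \cdot \av) = \varphi(\av \cdot u) \cdot (\av-\bv)$. Since $\av \cdot u$ begins with $\av$, Proposition~\ref{proposition_culminate} yields $\varphi(\av \cdot u) = \omega(\av \cdot u) = \omega(\av \cdot r(w))$, matching the right-hand side.

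In the second case, suppose $w$ ends in $\bv$, so that $r(w) = 0$ and the right-hand side vanishes. I claim $\varphi_{ub}(\av \cdot w) = 0$, which I would prove by induction on the degree of $w$. The base case $w = \bv$ is exactly Lemma~\ref{lemma_varphi_ub_III}. For the inductive step, since $w$ has degree at least two and ends in $\bv$, write $w = w' \cdot \av\bv$ or $w = w' \cdot \bv\bv$. In the first subcase Lemma~\ref{lemma_varphi_ub_III} gives $\varphi_{ub}(\av \cdot w' \cdot \av\bv) = 0$ directly. In the second subcase Lemma~\ref{lemma_varphi_ub_II} gives $\varphi_{ub}(\av \cdot w' \cdot \bv\bv) = \varphi_{ub}(\av \cdot w' \cdot \bv) \cdot (\av-\bv)$, and the factor $\varphi_{ub}(\av \cdot w' \cdot \bv)$ vanishes by the induction hypothesis because $w' \cdot \bv$ is of strictly smaller degree and still ends in~$\bv$.

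Combining the two cases handles every monomial $w$ of positive degree, and linearity finishes the proof. The only mildly delicate step is the inductive argument in the second case: one must notice that the three structural lemmas cover all ways a monomial ending in $\bv$ can be built (as $w' \av\bv$ or $w'\bv\bv$), so that the recursion in Lemma~\ref{lemma_varphi_ub_II} always terminates at the base case $\varphi_{ub}(\av\bv)=0$.
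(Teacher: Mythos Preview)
Your proof is correct and follows essentially the same route as the paper. The only cosmetic difference is in the second case: where you use induction on the degree of $w$ to kill monomials ending in $\bv$, the paper instead observes directly that $\av \cdot v \cdot \bv$ factors as $u \cdot \av\bv \cdot \bv^{k}$ and then applies Lemma~\ref{lemma_varphi_ub_II} $k$ times followed by Lemma~\ref{lemma_varphi_ub_III}; your induction simply unwinds this same factorization one step at a time.
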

\begin{proof}
The case
$w = v \cdot \av$ 
follows from Lemma~\ref{lemma_varphi_ub_I}.
The remaining  case is $w = v \cdot \bv$.
Note that $\av \cdot v \cdot \bv$ can be factored as
$u \cdot \av\bv \cdot \bv^{k}$ for a monomial $u$.
Hence
$\varphi_{ub}(u \cdot \av\bv \cdot \bv^{k})
  =
 \varphi_{ub}(u \cdot \av\bv) \cdot (\av-\bv)^{k}
  =
 0$
by Lemmas~\ref{lemma_varphi_ub_II}
and~\ref{lemma_varphi_ub_III}.
\end{proof}

We combine all of these results to 
conclude that the $\cd$-index of
the poset of unbounded regions~$T_{ub}$ can be computed
in terms of the $\ab$-index of the unbounded intersection lattice
$\Lub$.

\begin{theorem}
Let $\HH$ be a non-central hyperplane arrangement with
the unbounded intersection
lattice $\Lub$ and poset of unbounded regions $T_{ub}$.
Then the $\ab$-index of $T_{ub}$ is given by
$$    \Psi(T_{ub})
    =
      \omega(\av \cdot \Psi(\Lub))^{*} . $$
\label{theorem_unbounded}
\end{theorem}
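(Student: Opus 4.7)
The plan is to assemble the final identity directly from the pieces already gathered in the preceding subsections. All of the hard work lies in those lemmas; the theorem itself is essentially a bookkeeping step.

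First I would invoke Theorem~\ref{theorem_poset_varphi_unbounded}, which expresses $\Psi(Q)$ as $\varphi_{ub}(\Psi(\Lzero))$. Since $\Lzero$ is obtained from $\LL$ by adjoining a new minimum $\hz$, the chain definition of the $\ab$-index immediately gives $\Psi(\Lzero) = \av \cdot \Psi(\LL)$, so
\[
\Psi(Q) = \varphi_{ub}(\av \cdot \Psi(\LL)).
\]
Next I would apply the preceding Proposition, which evaluates $\varphi_{ub}$ on products of the form $\av \cdot w$, to obtain
\[
\Psi(Q) = \omega(\av \cdot r(\Psi(\LL))) \cdot (\av-\bv).
\]
Using the identity $r(\Psi(\LL)) = \Psi(\Lub)$ noted just before the Proposition, this becomes $\Psi(Q) = \omega(\av \cdot \Psi(\Lub)) \cdot (\av-\bv)$.

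The second ingredient is the rank-shift relation between $Q$ and $T_{ub}$ established at the outset of Section~\ref{section_affine}, namely $\Psi(T_{ub})^{*} \cdot (\av-\bv) = \Psi(Q)$. Combining this with the previous display yields
\[
\Psi(T_{ub})^{*} \cdot (\av-\bv) = \omega(\av \cdot \Psi(\Lub)) \cdot (\av-\bv).
\]
Since $\zab$ is a free associative algebra, right multiplication by $(\av-\bv)$ is injective, so the factor cancels, leaving $\Psi(T_{ub})^{*} = \omega(\av \cdot \Psi(\Lub))$. Applying the involution $u \mapsto u^{*}$ to both sides and using $\Psi(P)^{**} = \Psi(P)$ produces the desired formula $\Psi(T_{ub}) = \omega(\av \cdot \Psi(\Lub))^{*}$.

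The only point requiring any care is the cancellation step, and it is harmless because $\zab$ has no zero divisors. All the genuine content, particularly the miraculous vanishing of $\varphi_{ub}$ on monomials ending in $\av\bv$ (Lemma~\ref{lemma_varphi_ub_III}) which is what allows $\varphi_{ub}(\av \cdot w)$ to depend only on $r(w)$ rather than on $w$ itself, has already been handled in the previous subsection; so in fact there is no serious obstacle remaining.
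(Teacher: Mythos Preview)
Your proof is correct and follows exactly the same route as the paper's own argument: start from $\Psi(T_{ub})^{*}\cdot(\av-\bv)=\Psi(Q)=\varphi_{ub}(\av\cdot\Psi(\LL))$, apply the Proposition to rewrite this as $\omega(\av\cdot r(\Psi(\LL)))\cdot(\av-\bv)=\omega(\av\cdot\Psi(\Lub))\cdot(\av-\bv)$, and cancel the factor $(\av-\bv)$. Your added remarks justifying the cancellation (no zero divisors in $\zab$) and the final application of the involution are welcome but do not change the underlying strategy.
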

\begin{proof}
We have that
\begin{eqnarray*}
\Psi(T_{ub})^{*} \cdot (\av-\bv)
  & = &
\Psi(Q) \\
  & = &
\varphi_{ub}(\av \cdot \Psi(\LL)) \\
  & = &
\omega(\av \cdot r(\Psi(\LL))) \cdot (\av-\bv) \\
  & = &
\omega(\av \cdot \Psi(\Lub)) \cdot (\av-\bv) .
\end{eqnarray*}
By cancelling $\av-\bv$ on both sides of this identity,
the result follows.
\end{proof}

\begin{figure}
\begin{center}
\scalebox{.3}{\epsfig{file=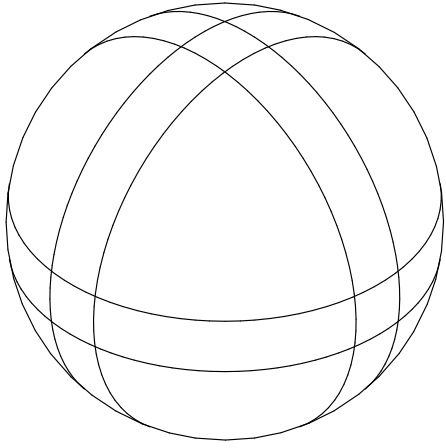}}
\end{center}
\caption{The spherical subdivision
obtained from the non-central arrangement $x,y,z=0,1$.}
\label{figure_spherical}
\end{figure}

\begin{example}
{\rm
Consider the non-central hyperplane arrangement
consisting of the six hyperplanes
$x = 0,1$, 
$y = 0,1$ and
$z = 0,1$.
See Figure~\ref{figure_affine}.
After intersecting this arrangement
with a sphere of large enough radius
we obtain the cell complex in
Figure~\ref{figure_spherical}.
The polytopal realization of this complex
is known as the rhombicuboctahedron.
The dual of the face lattice of this spherical complex is
not realized by a zonotope. However, one can view
the dual lattice as the face lattice
of a $2 \times 2 \times 2$ pile of cubes.

The intersection lattice $\LL$ is the
face lattice of the three-dimensional crosspolytope,
in other words,
the octahedron.
Hence the lattice of unbounded intersections
$\Lub$
has the flag $f$-vector
$(f_{\emptyset},f_{1},f_{2},f_{12}) = (1,6,12,24)$
and the flag $h$-vector 
$(h_{\emptyset},h_{1},h_{2},h_{12}) = (1,5,11,7)$.
The $\ab$-index is given by
$\Psi(\Lub) = \maa + 5 \cdot \mba + 11 \cdot \mab + 7 \cdot \mbb$.
Hence the $\cd$-index of $T_{ub}$ is
\begin{eqnarray*}
\Psi(T_{ub})
  & = &
      \omega(\maaa + 5 \cdot \maba + 11 \cdot \maab + 7 \cdot \mabb)^{*} \\
  & = &
      \mccc + 22 \cdot \mdc + 24 \cdot \mcd  .
\end{eqnarray*}
}
\end{example}

\section{Concluding remarks}

For regular subdivisions of manifolds there is now
a plethora of questions to ask.
\begin{itemize}
\item[(i)] What is the right analogue of a regular subdivision
in order that it  be polytopal? 
Can flag $f$-vectors be classified for
polytopal subdivisions?
\item[(ii)] Is there a Kalai convolution for manifolds
that will generate more inequalities for flag $f$-vectors?
\cite{Kalai}
\item[(iii)] Is there a lifting technique that will yield
             more inequalities for higher dimensional manifolds?
\cite{Ehrenborg_lifting}
\item[(iv)] Are there minimization inequalities for the
$\cd$-coefficients in the polynomial $\Psi$?
As a first step, can one prove the non-negativity of $\Psi$?
\cite{Billera_Ehrenborg,Ehrenborg_Karu}
\item[(v)] Is there an extension of the toric $g$-inequalities
to manifolds?
\cite{Bayer_Ehrenborg,Kalai_g,Karu,Stanley_h}
\item[(vi)] Can the coefficients for $\Psi$ be minimized for regular toric
arrangements as was done in the case of central hyperplane arrangements?
\cite{Billera_Ehrenborg_Readdy_om}
\end{itemize}

The most straightforward manifold to study is
$n$-dimensional projective space $P^{n}$. We offer the following
result in obtaining the $\ab$-index of subdivisions of $P^{n}$.
\begin{theorem}
Let $\Omega$ be a centrally symmetric regular subdivision of
the $n$-dimensional sphere $S^{n}$. Assume that when 
antipodal points of the sphere are identified,
a regular subdivision
$\Omega^{\prime}$ of the projective space $P^{n}$
is obtained.
Then the $\ab$-index of $\Omega^{\prime}$ is given by
$$    \Psi(\Omega^{\prime})
   =
      \frac{\cv^{n+1} + (\av-\bv)^{n+1}}{2}
   +
      \frac{\Phi}{2}   ,   $$
where the $\cd$-index of $\Omega$ is
$\Psi(\Omega) = \cv^{n+1} + \Phi$.
\end{theorem}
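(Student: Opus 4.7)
The plan is to relate the flag $f$-vectors of $\Omega$ and $\Omega'$ through the antipodal double cover $\pi\colon S^{n}\to P^{n}$, translate the result into flag $h$-vectors, and then assemble the $\ab$-indices.

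First I would verify that $\pi$ induces a two-to-one correspondence between chains of proper faces. The hypothesis that antipodal identification produces a regular subdivision $\Omega'$ of $P^{n}$ forces the antipodal action on the face poset of $\Omega$ to be free on proper faces, so every face of $\Omega'$ has exactly two preimages, an antipodal pair $F$ and $-F$. Since each closed cell of $\Omega'$ is contractible, its preimage under the double cover is a disjoint union of two closed balls, so $\overline{F}$ and $\overline{-F}$ are disjoint. Consequently, given a chain
$$c'=\{\hz=y_{0}<y_{1}<\cdots<y_{k-1}<y_{k}=\ho\}$$
in the face poset of $\Omega'$, lifting it to a chain of $\Omega$ amounts to choosing a lift $x_{1}$ of $y_{1}$ (two options) and then, inductively, taking $x_{i+1}$ to be the unique lift of $y_{i+1}$ whose closure contains $x_{i}$. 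Existence of such a lift follows from $y_{i}<y_{i+1}$, since $\pi^{-1}(y_{i})$ then lies in the disjoint union of the closures of the two lifts of $y_{i+1}$, so $x_{i}$ must be contained in one of them; uniqueness follows because if both lifts contained $x_{i}$ their closures would meet, contradicting the disjointness above. Hence for every non-empty $S\subseteq\{1,\ldots,n+1\}$ one has $f_{S}(\Omega)=2\cdot f_{S}(\Omega')$, whereas $f_{\emptyset}(\Omega)=f_{\emptyset}(\Omega')=1$.

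Next I would translate this into flag $h$-vectors. The inversion $h_{S}=\sum_{T\subseteq S}(-1)^{|S-T|}f_{T}$, together with the preceding identity and separating off the $T=\emptyset$ contribution, yields
$$2\cdot h_{S}(\Omega')=h_{S}(\Omega)+(-1)^{|S|}$$
for every $S\subseteq\{1,\ldots,n+1\}$. Multiplying by $u_{S}$ and summing, while using that $\sum_{S}(-1)^{|S|}u_{S}$ is the non-commutative expansion of $(\av-\bv)^{n+1}$, gives
$$2\cdot\Psi(\Omega')=\Psi(\Omega)+(\av-\bv)^{n+1}.$$
Substituting $\Psi(\Omega)=\cv^{n+1}+\Phi$ and dividing by two produces the claimed expression.

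The only step that is not routine bookkeeping is the chain-lifting count. The crux is the disjointness of antipodal cell closures in $\Omega$, and this is precisely where the hypothesis that the antipodal quotient is a regular subdivision of $P^{n}$ is used: without the evenly covered property of each cell of $\Omega'$ under the double cover, the lifts would be neither unique nor exactly twofold.
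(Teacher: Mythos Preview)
Your proof is correct and follows essentially the same approach as the paper: both establish the identity $2\cdot\Psi(\Omega')=\Psi(\Omega)+(\av-\bv)^{n+1}$ via the two-to-one correspondence between chains in $\Omega$ and chains in $\Omega'$ that contain at least one proper face. The paper states this correspondence directly in terms of the chain-weight formula for the $\ab$-index, whereas you route it through the flag $f$- and $h$-vectors, but the content is the same (and your justification of the lifting via disjointness of antipodal cell closures is more carefully spelled out than in the paper).
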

\begin{proof}
Each chain $c = \{\hz = x_{0} < x_{1} < \cdots < x_{k} = \ho\}$ with
$k \geq 2$ in $\Omega^{\prime}$ corresponds to two chains in~$\Omega$
with the same weight $\wt(c)$. The chain
$c = \{\hz = x_{0} < x_{1} = \ho\}$ corresponds to exactly
one chain in~$\Omega$ and has weight $(\av-\bv)^{n+1}$.
Hence
$\Psi(\Omega) = 2 \cdot \Psi(\Omega^{\prime}) - (\av-\bv)^{n+1}$,
proving the result.
\end{proof}

The results in this paper have been stated
for hyperplane arrangements. In true generality
one could
work with the underlying oriented matroid,
especially  since there are nonrealizable ones 
such as the non-Pappus oriented matroid.
All of these can be  represented as pseudo-hyperplane
arrangements. However, we have chosen to work with
hyperplane arrangements in order not to lose
the geometric intuition.

Poset transformations related to the $\omega$ map appear
in~\cite{Ehrenborg_r_Birkhoff,Ehrenborg_Readdy_Tchebyshev,Hsiao}.
Are there toric or affine analogues of these poset transforms?

Another way to encode the flag $f$-vector data of a poset
is to use the quasisymmetric function of a
poset~\cite{Ehrenborg_Hopf}.
In this language the $\omega$ map is translated to
Stembridge's $\vartheta$ map;
see~\cite{Billera_Hsiao_van_Willigenburg,Stembridge}.
Would the results of
Theorems~\ref{theorem_toric}
and~\ref{theorem_unbounded}
be appealing in the quasisymmetric function viewpoint?

Richard Stanley has asked if the coefficients of the toric
characteristic polynomial are alternating. If so, is there any
combinatorial interpretation of the absolute values of the
coefficients.

A far reaching generalization of Zaslavsky's results for hyperplane
arrangements is by Goresky and MacPherson~\cite{Goresky_MacPherson}.
Their results determine the cohomology groups of the
complement of a complex hyperplane arrangement. 
For a toric analogue of the Goresky--MacPherson results,
see work of De~Concini and Procesi~\cite{De_Concini_Procesi}.
For algebraic considerations of toric arrangements,
see~\cite{Douglass,Macmeikan_I, Macmeikan_II, Macmeikan_III}.

Greene and Zaslavsky~\cite{Greene_Zaslavsky}
also give an interpretation of the derivative of the chromatic
polynomial of a graph evaluated at $1$. Given an edge $ij$ of a graph $G$,
the number of acyclic orientations of the graph
with unique source at $i$ and
unique sink at $j$ is given by
$(-1)^{n} \cdot \left.\frac{d}{dt} \chi(G;t)\right|_{t=1}$.
Is there a geometric proof of this fact
analogous to the proof of 
Theorem~\ref{theorem_Greene_Zaslavsky}?

In Section~\ref{section_toric} we restricted ourselves
to studying arrangements that cut the torus into
regular cell complexes. In a future paper~\cite{Ehrenborg_Slone},
two of the authors are developing the notion
of a $\cd$-index for non-regular cell complexes.

\section*{Acknowledgements}

The authors thank
the MIT Mathematics Department where
this research was carried out.
We also thank the
referee for his careful comments
and for bringing the 1977 paper of Zaslavsky
to our attention,
Andrew Klapper for his suggestions to improve the exposition,
and Tricia Hersh for suggesting that we look at
graphical arrangements on the torus.
The first and third authors were partially
supported by
National Security Agency grant H98230-06-1-0072.

\newcommand{\journal}[6]{{\sc #1,} #2, {\it #3} {\bf #4} (#5), #6.}
\newcommand{\book}[4]{{\sc #1,} ``#2,'' #3, #4.}
\newcommand{\bookf}[5]{{\sc #1,} ``#2,'' #3, #4, #5.}
\newcommand{\books}[6]{{\sc #1,} ``#2,'' #3, #4, #5, #6.}
\newcommand{\collection}[6]{{\sc #1,}  #2, #3, in {\it #4}, #5, #6.}
\newcommand{\thesis}[4]{{\sc #1,} ``#2,'' Doctoral dissertation, #3, #4.}
\newcommand{\springer}[4]{{\sc #1,} ``#2,'' Lecture Notes in Math.,
                          Vol.\ #3, Springer-Verlag, Berlin, #4.}
\newcommand{\preprint}[3]{{\sc #1,} #2, preprint #3.}
\newcommand{\preparation}[2]{{\sc #1,} #2, in preparation.}
\newcommand{\appear}[3]{{\sc #1,} #2, to appear in {\it #3}}
\newcommand{\submitted}[3]{{\sc #1,} #2, submitted to {\it #3}}
\newcommand{\JCTA}{J.\ Combin.\ Theory Ser.\ A}
\newcommand{\AdvancesinMathematics}{Adv.\ Math.}
\newcommand{\JournalofAlgebraicCombinatorics}{J.\ Algebraic Combin.}

\newcommand{\communication}[1]{{\sc #1,} personal communication.}


{\small

}

\bigskip

{\em R.\ Ehrenborg, M.\ Readdy and M.\ Slone,
Department of Mathematics,
University of Kentucky,
Lexington, KY 40506-0027,}
\{{\tt jrge},{\tt readdy},{\tt mslone}\}{\tt @ms.uky.edu}

\end{document}